\newenvironment{proof}{\noindent {\bf Proof:}}{\hfill $\Box$}
\newtheorem{theorem}{Theorem}
\newtheorem{lemma}{Lemma}
\newtheorem{corollary}{Corollary}
\newtheorem{assumption}{Assumption}
\newtheorem{remark}{Remark}
\title{\bf Convex computation of the region of attraction of polynomial control systems}
\begin{document}

\author{Didier Henrion$^{1,2,3}$, Milan Korda$^4$}

\footnotetext[1]{CNRS; LAAS; 7 avenue du colonel Roche, F-31400 Toulouse; France. {\tt henrion@laas.fr}}
\footnotetext[2]{Universit\'e de Toulouse; LAAS; F-31400 Toulouse; France.}
\footnotetext[3]{Faculty of Electrical Engineering, Czech Technical University in Prague,
Technick\'a 2, CZ-16626 Prague, Czech Republic.}
\footnotetext[4]{Laboratoire d'Automatique, \'Ecole Polytechnique F\'ed\'erale de Lausanne, Station 9,
CH-1015, Lausanne, Switzerland. {\tt milan.korda@epfl.ch}}

\date{Draft of \today}

\maketitle

\begin{abstract}
We address the long-standing problem of computing the region of attraction (ROA) of a target set
(e.g., a neighborhood of an equilibrium point) of a controlled nonlinear system with polynomial
dynamics and semialgebraic state and input constraints. We show that the ROA can be computed by solving an infinite-dimensional convex linear programming (LP) problem over the space of measures. In turn, this problem can be solved approximately via a classical converging hierarchy of convex finite-dimensional
linear matrix inequalities (LMIs). Our approach is genuinely primal in the sense that convexity
of the problem of computing the ROA is an outcome of optimizing directly over system trajectories.
The dual infinite-dimensional LP on nonnegative continuous functions (approximated by polynomial sum-of-squares) allows us to generate a hierarchy of semialgebraic outer approximations of the ROA
at the price of solving a sequence of LMI problems with asymptotically vanishing conservatism.
This sharply contrasts with the existing literature which follows an exclusively dual Lyapunov
approach yielding either nonconvex bilinear matrix inequalities or conservative LMI conditions. The approach is simple and readily applicable as the outer approximations are the outcome of a single semidefinite program with no additional data required besides the problem description.
\end{abstract}

\begin{center}\small
{\bf Keywords:} Region of attraction, polynomial control systems, occupation measures,
linear matrix inequalities (LMIs), convex optimization, viability theory, reachable set, capture basin.
\end{center}

\section{Introduction}

Given a nonlinear control system, a state-constraint set and a target set
(e.g. a neighborhood of an attracting orbit or an equilibrium point),
the constrained controlled region of attraction (ROA) is the set of all initial states
that can be steered with an admissible control
to the target set without leaving the state-constraint set. The target set can be required to be reached at a given time or at any time before a given time. The problem of computing the ROA (and variations thereof) lies at the heart of viability theory~(see, e.g.,~\cite{aubinViable}) and goes by many other names, e.g., the reach-avoid or target-hitting problem~ (see, e.g., \cite{lygeros}); the ROA itself is sometimes referred to as the backward reachable set~\cite{mitchell} or capture basin~\cite{aubinViable}.

There are many variations on the ROA computation problem addressed in this paper as well as a large number of related problems. For instance, one could consider asymptotic convergence instead of finite-time formulation and / or inner approximations instead of outer approximations. Among the related problems we can name the computation of (forward) reachable sets and maximum / minimum (robust) controlled invariant sets. Most of these variations are amenable to our approach, sometimes with different quality of the results obtained; see the Conclusion for a more detailed discussion.

We show that the computation of the ROA boils
down to solving an infinite-dimensional linear programming (LP) problem in the
cone of nonnegative Borel measures. The formulation is genuinely primal in the sense
that we optimize directly over controlled trajectories modeled with occupation measures
\cite{sicon,quincampoix}.

In turn, in the case of polynomial dynamics, semialgebraic state-constraint,
input-constraint and target sets, this infinite-dimensional LP can be solved approximately by a classical hierarchy of finite-dimensional convex linear matrix inequality (LMI) relaxations.
The dual infinite-dimensional LP on nonnegative continuous functions and its LMI relaxations
on polynomial sum-of-squares provide explicitly an asymptotically converging
sequence of nested semialgebraic outer approximations of the ROA.

The benefits of our occupation measure approach are overall the convexity of the problem of finding the ROA, and the availability of publicly available software to implement and solve the hierarchy of LMI relaxations.

Most of the existing literature on ROA computation follows Zubov's approach
\cite{margolis,zubov,hahn} and uses a dual Lyapunov certificate; see
\cite{vanelli}, the survey \cite{genesio}, Section 3.4 in \cite{haddad},
and more recently \cite{topcu,chesi} and \cite{chesiPaper} and the references therein.
These approaches either enforce convexity with conservative LMI conditions
(whose conservatism is difficult if not impossible to evaluate systematically)
or they rely on nonconvex bilinear matrix inequalities (BMIs),
with all their inherent numerical difficulties. In contrast, we show in this
paper that the problem of computing the ROA has actually a convex infinite-dimensional
LP formulation, and that this LP can be solved with a hierarchy of convex finite-dimensional LMIs
with asymptotically vanishing conservatism.

We believe that our approach is closer
in spirit to set-oriented approaches \cite{dellnitz}, level-set and Hamilton-Jacobi
approaches \cite{kurzhanski,lygeros,mitchell} or transfer operator
approaches \cite{vaidya}, even though we do not discretize with respect to time and/or
space. In our approach, we model a measure with a finite number of its moments,
which can be interpreted as a frequency-domain discretization (by analogy with
Fourier coefficients which are moments with respect to the unit circle).

Another way to evaluate the contribution of our paper is to compare it with
the recent works \cite{volume,pmi} which deal with polynomial approximations
of semialgebraic sets. In these references, the sets
to be approximated are given a priori (as a polynomial sublevel set,
or as a feasibility region of a polynomial matrix inequality).
In contrast, in the current paper the set to be approximated (namely the ROA
of a nonlinear dynamical system) is not known in advance, and our contribution
can be understood as an application and extension of the techniques of references
\cite{volume,pmi} to sets defined implicitly by differential equations.

For the special case of linear systems, the range of computational tools and theoretical results is wider; see, e.g., \cite{gilbertTan, blanchini}. Nevertheless, even for this simple class of systems, the problem of ROA computation is notoriously hard, at least in a controlled setting where, after time-discretization, polyhedral projections are required~\cite{blanchini}.

The use of occupation measures and related concepts has a long history in the fields of Markov decision processes and stochastic control; see, e.g., \cite{hernandez-lerma,fleming}. Applications to deterministic control problems were, to the best of our knowledge, first systematically treated\footnote{J. E. Rubio in \cite{rubio} used the so called Young measures~\cite{young}, not the occupation measures, although the idea of ``linearizing'' the nonlinear problem by lifting it into an infinite-dimensional space of measures is the same. Similar techniques were studied around the same time by others, for instance, A. F. Filippov, J. Warga and R. V. Gamkrelidze.} in~\cite{rubio} and enjoyed a resurgence of interest in the last decade; see, e.g., \cite{rantzer,sicon,vaidyaLyapunov,quincampoix} and references therein. However, to the best of our knowledge, this is the first time these methods are applied to region of attraction computation.



Our primary focus in this paper is the computation of the constrained finite-time controlled region of attraction of a given set. This problem is formally stated in Section~\ref{sec:ProblemStatement} and solved using occupation measures in Section~\ref{sec:primal}; the occupation measures themselves are introduced in Section~\ref{sec:OccupMeas}. A dual problem on the space of continuous functions is discussed in Section~\ref{sec:dual}. The hierarchy of finite-dimensional LMI relaxations of the infinite dimensional LP is described in Section~\ref{sec:LMI}. Convergence results are presented in Section~\ref{sec:convResults}. An extension to the free final time case is described in Section~\ref{sec:infiniteTime}. Numerical examples are presented in Section~\ref{sec:NumeEx}, and we conclude in Section~\ref{sec:Conclusion}.

A reader interested only in the finite-dimensional relaxations providing the converging sequence of outer approximations can consult directly the dual infinite-dimensional LP~(\ref{vlp}), its finite-dimensional LMI approximations~(\ref{dlmi}) and the resulting outer approximations in Section~\ref{sec:convResults} with convergence proven in Theorem~\ref{thm:convSet}.

\paragraph{Notation and preliminaries}

Throughout the paper the symbol $\mathbb{Z}_{[i,j]}$ denotes the set of consecutive integers $\{i,i+1,\ldots,j\}$. The symbol $\mathbb{R}[\cdot]$ denotes the ring of polynomials in variables given by the argument. The difference of two sets $A$ and $B$ is denoted by $A\setminus B$. All sets are automatically assumed Borel measurable; in particular if we write ``for all sets'', we mean ``for all Borel measurable sets''. By a measure $\mu$ defined on a set $X\subset \mathbb{R}^n$, we understand a signed Borel measure, i.e., a (not necessarily nonnegative) mapping from subsets of $X$ to real numbers satisfying $\mu(\emptyset) = 0$ and $\mu(\cup_{i=1}^\infty A_i) = \sum_{i=1}^\infty \mu(A_i)$ for every countable pairwise disjoint collection of sets $A_i\subset X$. The space of (signed) measures on a set $X$ is denoted by $M(X)$ and the space of continuous functions is denoted by $C(X)$; the dual space of all linear functionals on $C(X)$ is denoted by $C(X)'$. The symbols $C(X;\mathbb{R}^m)$ (resp. $C^1(X;\mathbb{R}^m)$) then denote the spaces of all continuous (resp. continuously differentiable) functions taking values in $\mathbb{R}^m$. Any measure $\mu \in M(X)$ can be viewed as an element of $C(X)'$ via the duality pairing induced by integration of functions in $C(X)$ with respect to~$\mu$:
\[
\langle \mu,g\rangle := \int _X g(x)\,d\mu(x),\; \mu\in M(X),\,g\in C(X).
\]
In addition, we use the following notation:
\begin{itemize}
\item
$I_A(\cdot)$ denotes the indicator function of a set $A$, i.e., a function
equal to $1$ on $A$ and $0$ elsewhere;
\item
$\lambda$ denotes the Lebesgue measure on $X \subset {\mathbb R}^n$, i.e.,
\[
\lambda(A) = \int_X I_A(x)\,d\lambda(x) = \int_X I_A(x)dx = \int_A dx
\]
is the standard $n$-dimensional volume of a set $A \subset X$;
\item
$\mathrm{spt}\,\mu$ denotes the support of a measure $\mu$, that is, the set of all points $x$
such that $\mu(A) > 0$ for every open neighborhood $A$ of $x$; by construction this set is closed.
\end{itemize}

\section{Problem statement}\label{sec:ProblemStatement}

Consider the control system
\begin{equation}\label{sys}
\dot{x}(t) = f(t,x(t),u(t)), \: t \in [0,T],
\end{equation}
with the state $x(t)\in \mathbb{R}^n$, the control input $u(t)\in\mathbb{R}^m$ and a terminal time $T > 0$. Each entry of the vector field $f$ is assumed to be polynomial\footnote{Note that the infinite-dimensional results of Sections~\ref{sec:primal} and \ref{sec:dual} hold with any Lipschitz $f$; the assumption of $f$ being polynomial and the constraint sets semialgebraic is required only for finite-dimensional relaxations of Section~\ref{sec:LMI}.}, i.e., $f_i \in {\mathbb R}[t,x,u]$,
$i\in\mathbb{Z}_{[1,n]}$. The state and the control input are subject to the basic semialgebraic constraints
\begin{equation}\label{con}
\begin{array}{l}
x(t) \in X \!:=\hspace{-0.1em} \{x \in {\mathbb R}^n : g^{X}_{i}(x) \geq 0, i\in\mathbb{Z}_{[1,n_X]}\},\, t\in [0,T], \\ 
u(t) \in U :=\! \{u \in {\mathbb R}^m : {g^U_i}(u) \geq 0, i\in\mathbb{Z}_{[1,n_U]}\}, \, t\in [0,T],\\
\end{array}
\end{equation}
with ${g^X_i} \in {\mathbb R}[x]$ and ${g^U_i} \in {\mathbb R}[u]$; the terminal state $x(T)$ is constrained to lie in the basic semialgebraic set
\[
X_T := \{x \in {\mathbb R}^n \: :\: {g^{X_T}_i}(x) \geq 0, i\in\mathbb{Z}_{[1,n_T]}\} \subset X,
\]
with ${g^{X_T}_i} \in {\mathbb R}[x]$.
\begin{assumption}\label{as:compact}
The sets $X$, $U$ and $X_T$ are compact.
\end{assumption}

A measurable control function $u:[0,T]\to \mathbb{R}^m$ is called admissible if $u(t)\in U$ for all $t\in [0,T]$. The set of all admissible control functions is denoted by $\mathcal{U}$. The set of all admissible trajectories starting from the initial condition $x_0$ generated by admissible control functions is then
\begin{align}\label{eq:calX0}
\mathcal{X}(x_0) := \Big\{ x(\cdot) \: : \: \exists\: u(\cdot)\in\mathcal{U}\;\mathrm{s.t.}\; &\dot{x}(t) = f(t,x(t),u(t))\; \mathrm{a.e.},\nonumber\\ 
& x(0) = x_0,\,x(t)\in X,\, x(T)\in X_T,\, \forall\, t\in [0,T]\Big\}, 
\end{align}
where $x(\cdot)$ is required to be absolutely continuous. Here, a.e. stands for ``almost everywhere'' with respect to the Lebesgue measure on $[0,T]$.

The constrained controlled region of attraction (ROA) is then defined as
\begin{equation}\label{eq:ROA}
	X_0 := \big\{x_0\in X \: : \: \mathcal{X}(x_0)\neq \emptyset \big\}.
\end{equation}
In words, the ROA is the set of all initial conditions for which there exists an admissible trajectory, i.e., the set of all initial conditions that can be steered to the target set in an admissible way. The set $X_0$ is bounded (by Assumption~\ref{as:compact}) and unique.


In the sequel we pose the problem of computing the ROA as an infinite-dimensional linear program (LP) and show how the solution to this LP can be approximated with asymptotically vanishing conservatism by a sequence of solutions to linear matrix inequality (LMI) problems.

\section{Occupation measures}\label{sec:OccupMeas}
In this section we introduce the concept of occupation measures and discuss its connection to trajectories of the control system~(\ref{sys}).

\subsection{Liouville's equation}
Given an initial condition $x_0$ and an admissible trajectory $x(\cdot \!\mid\! x_0) \in {\mathcal X}(x_0)$
with its corresponding control $u(\cdot \!\mid\! x_0) \in \mathcal{U}$, which we assume
to be a measurable function of $x_0$, we define the \emph{occupation measure} $\mu(\cdot\!\mid\! x_0)$ by
\[
\mu(A\times B\times C \!\mid\! x_0) := \int_0^T I_{A\times B\times C}(t,x(t \!\mid\! x_0),u(t \!\mid\! x_0))\,dt
\]
for all $A\times B\times C\subset
[0,T]\times X\times U$. For any such triplet of sets, the quantity $\mu(A\times B\times C \!\mid\! x_0)$ is equal to the amount of time out of $A\subset [0,T]$ spent by the state and control trajectory $(x(\cdot \!\mid\! x_0),u(\cdot \!\mid\! x_0))$ in $ B \times C \subset X\times U$.


 The occupation measure has the following important property: for any measurable function $g(t,x,u)$ the equality
\begin{equation}\label{eq:occProp}
\int_0^T g(t,x(t\!\mid\! x_0), u(t\!\mid\! x_0))\,dt = \int_{[0,T]\times X\times U}\hspace{-1.1cm}g(t,x,u)\,d\mu(t,x,u \!\mid\! x_0)
\end{equation}
holds. Therefore, loosely speaking, the occupation measure $\mu(\cdot \!\mid\! x_0)$ encodes all information about the trajectory of the state and control input $(x(\cdot \!\mid\! x_0),u(\cdot \!\mid\! x_0))$.

Define further the linear
operator $\mathcal{L}:C^1([0,T]\times X)\to C([0,T]\times X\times U)$ by
\[
v \mapsto \mathcal{L}v := \frac{\partial v}{\partial t}
+ \sum_{i=1}^n \frac{\partial v}{\partial x_i} f_i(t,x,u) = \frac{\partial v}{\partial t} + \mathrm{grad}\, v \cdot f
\] 
and its adjoint operator  $\mathcal{L}':C([0,T]\times X\times U)'\to C^1([0,T]\times X)'$ by the adjoint relation
\[
\langle \mathcal{L}'\nu,v\rangle := \langle \nu,\mathcal{L}v \rangle = \int_{[0,T]\times X \times U}\hspace{-2em} \mathcal{L}v(t,x,u)\, d\nu(t,x,u)
\]
for all $\nu\in M([0,T]\times X \times U)=C([0,T]\times X \times U)'$ and $v\in C^1([0,T]\times X)$. Given a test function $v \in C^1([0,T]\times X)$, 
it follows from~(\ref{eq:occProp}) that
\begin{align}\label{eq:basic}
v(T,x(T\!\mid\! x_0)) & \: = \:  v(0,x_0) \:+\: \displaystyle\int_0^T \frac{d}{dt}v(t,x(t \!\mid\! x_0))\,dt \nonumber\\
& \: = \: v(0,x_0) \:+\: \displaystyle\int_0^T  \mathcal{L}v(t,x(t \!\mid\! x_0),u(t \!\mid\! x_0))\,dt \nonumber\\
& \: = \: v(0,x_0) \:+\: \displaystyle\int_{[0,T]\times X\times U}\hspace{-2.5em} \mathcal{L}v(t,x,u)\,d\mu(t,x,u\!\mid\! x_0)\nonumber \\
& \: = \: v(0,x_0) \:+\: \langle \mathcal{L}'\mu(\cdot\!\mid\! x_0),v \rangle,
\end{align}
where we have used the adjoint relation in the last equality.

\begin{remark}
The adjoint operator $\mathcal{L}'$ is sometimes expressed symbolically as
\[
\nu \mapsto \mathcal{L}'\nu = -\frac{\partial \nu}{\partial t}
- \sum_{i=1}^n \frac{\partial(f_i \nu)}{\partial x_i} = -\frac{\partial \nu}{\partial t} - \mathrm{div}\, f\nu,
\]
where the derivatives of measures are understood in the weak sense, or 
in the sense of distributions (i.e., via their action on suitable test functions), and the change of sign comes from the integration by parts formula. For more details the interested reader is referred to any textbook on functional analysis and partial differential equation, e.g., \cite{evans}. The concept of weak derivatives of measures is not essential for the remainder of the paper and only highlights the important connections of our approach to PDE literature.
\end{remark}

\looseness-1
Now consider that the initial state is not a single point but that its distribution in space is modeled by an \emph{initial measure}\footnote{The measure $\mu_0$ can be thought of as the probability distribution of $x_0$ although we do not require that its mass be normalized to one.} $\mu_0\in M(X)$, and that for each initial state $x_0\in \mathrm{spt}\,\mu_0$ there exists an admissible trajectory $x(\cdot\!\mid\! x_0)\in \mathcal{X}(x_0)$ with an admissible control function $u(\cdot \!\mid\! x_0)\in \mathcal{U}$. Then we can define the average occupation measure $\mu\in M([0,T]\times X \times U)$ by
\begin{equation}\label{eq:ocmeas}
	\mu(A\times B\times C) := \int_X \mu(A\times B\times C \!\mid\! x_0)\, d\mu_0(x_0),
\end{equation}
and the \emph{final measure} $\mu_T\in M(X_T)$ by
\begin{equation}\label{eq:finmeas}
\mu_T(B) := \int_X I_B(x(T\!\mid\! x_0))\, d\mu_0(x_0).
\end{equation}
The average occupation occupation measure $\mu$ measures the average time spent by the state and control trajectories in subsets of $X \times U$, where the averaging is over the distribution of the initial state given by the initial measure $\mu_0$; the final measure $\mu_T$ represents the distribution of the state at the final time $T$ after it has been transported along system trajectories from the initial distribution $\mu_0$.

It follows by integrating (\ref{eq:basic}) with respect to $\mu_0$ that 
\begin{align*}
\int_{X_T} v(T,x)\,d\mu_T(x)   \: = \:  \int_X v(0,x)\,d\mu_0(x) +  \int_{[0,T]\times X\times U}\hspace{-2em}\mathcal{L}v(t,x,u) \,d\mu(t,x,u),
\end{align*}
or more concisely
\begin{equation}\label{test}
\langle\mu_T,\: v(T,\cdot)\rangle = \langle\mu_0,\: v(0,\cdot)\rangle \:\: +\:\:  \langle\mu,\:\mathcal{L}v\rangle \:\:\:\:\:\: \forall\, v \in C^1([0,T]\times X),
\end{equation}
which is a linear equation linking the nonnegative measures $\mu_T$, $\mu_0$ and $\mu$. Denoting $\delta_t$ the Dirac measure at a point $t$ and $\otimes$ the product of measures, we can write \[\langle\mu_0,\: v(0,\cdot)\rangle = \langle\delta_0\otimes\mu_0,\: v\rangle \quad\text{and}\quad \langle\mu_T,\: v(T,\cdot)\rangle=\langle  \delta_T\otimes\mu_T,\: v\rangle.\] Then, Eq.~(\ref{test}) can be rewritten equivalently using the adjoint relation as
\[
\langle{\mathcal L}'\mu,\:v\rangle = \langle\delta_T\otimes\mu_T,v\rangle \:-\: \langle\delta_0\otimes\mu_0,\: v\rangle\quad \forall\,v\in C^1([0,T]\times X),
\]
and since this equation is required to hold for all test functions $v$, we obtain
the linear operator equation
\begin{equation}\label{eq:Liouville}
\delta_T\otimes\mu_T = \delta_0\otimes\mu_0 + {\mathcal L}'\mu.
\end{equation}
This equation is classical in fluid mechanics and statistical physics, where $\mathcal{L}'$ is usually written using distributional derivatives of measures as remarked above; then the equation is referred to as Liouville's partial differential equation.

Each family of admissible trajectories starting from a given initial distribution $\mu_0\in M(X)$ 
satisfies Liouville's equation~(\ref{eq:Liouville}). The converse may not hold in general although for the computation of the ROA the two formulations can be considered equivalent, at least from a practical viewpoint. Let us briefly elaborate more
on this subtle point now.

\subsection{Relaxed ROA}\label{sec:relaxed}

The control system $\dot{x}(t) = f(t,x(t),u(t))$, $u(t)\in U$, can be viewed as a differential inclusion
\begin{equation}\label{eq:incl1} 
	\dot{x}(t) \in f(t,x(t),U) := \{f(t,x(t),u)\: : \: u\in U  \}.
\end{equation}
It turns out that in general the measures satisfying the Liouville's equation~(\ref{eq:Liouville}) are not in a one-to-one correspondence with the trajectories of~(\ref{eq:incl1}) but rather with the trajectories of the \emph{convexified inclusion}
\begin{equation}\label{eq:incl2}
	\dot{x}(t)\in \mathrm{conv}\:f(t,x(t),U),
\end{equation}
where $\mathrm{conv}$ denotes the convex hull\footnote{Note that the set $\mathrm{conv}\:f(t,x(t),U)$ is closed for every $t$ since $f$ is continuous and $U$ compact; therefore there is no need for a closure in~(\ref{eq:incl2}).}. Indeed, we show in Lemma~\ref{lem:corr} in Appendix~A that any triplet of measures satisfying
Liouville's equation~(\ref{eq:Liouville}) is generated by a family of trajectories of the convexified inclusion~(\ref{eq:incl2}). Here, given a family\footnote{Each such family can be described by a measure on $C([0,T];\mathbb{R}^n)$ which is supported on the absolutely continuous solutions to~(\ref{eq:incl2}). Note that there may be more than one trajectory corresponding to a single initial condition since the inclusion~(\ref{eq:incl2}) may admit multiple solutions.} of admissible trajectories of the convexified inclusion~(\ref{eq:incl2}) starting from an initial distribution $\mu_0$, the occupation and final measures are defined in a complete analogy via (\ref{eq:ocmeas}) and (\ref{eq:finmeas}), but now there are only the time and space arguments in the occupation measure, not the control argument.


Let us denote the set of absolutely continuous admissible trajectories of~(\ref{eq:incl2}) by
\begin{align*}
\bar{\mathcal X}(x_0) := \{x(\cdot):\;&\dot{x}(t)\in \mathrm{conv}\:f(t,x(t),U)\:\text{a.e.},\: x(0)=x_0,\\ &x(T)\in X_T,\: x(t)\in X\: \forall\, t\in[0,T]\}.
\end{align*}
 Then we define the relaxed region of attraction as
\[
\bar{X}_0 := \big\{x_0\in X \: : \: \bar{\mathcal{X}}(x_0)\neq \emptyset \big\}.
\]
Clearly $X_0\subset \bar{X}_0$ and the inclusion can be strict; see Appendix~C for concrete examples.
However, by the Filippov-Wa$\dot{\mathrm z}$ewski relaxation Theorem \cite[Theorem~10.4.4, Corollary~10.4.5]{aubin}, the trajectories of the original inclusion~(\ref{eq:incl1}) are dense (w.r.t. the metric of
uniform convergence of absolutely continuous functions of time) in the set of trajectories of the convexified inclusion~(\ref{eq:incl2}). This implies that the relaxed region of attraction $\bar{X}_0$ corresponds to the region of attraction of the original system but with infinitesimally dilated constraint sets $X$ and $X_T$; see Appendix~B for more details. Therefore, we argue that there is little difference between the two ROAs from a practical point of view. Nevertheless, because of this subtle distinction we make the following standing assumption in the remaining part of the paper.

\begin{assumption}\label{relaxed}
Control system (\ref{sys}) is such that $\lambda(X_0)=\lambda(\bar{X}_0)$.
\end{assumption}

\looseness-1
In other words, the volume of the classical ROA $X_0$ is assumed to be equal to
the volume of the relaxed ROA $\bar{X}_0$. Obviously, this is satisfied if
$X_0=\bar{X}_0$, but otherwise these sets may differ by a set of zero Lebesgue measure. Any of the following conditions on control system (\ref{sys})
is \emph{sufficient} for Assumption \ref{relaxed}
to hold:
\begin{itemize}
\item $\dot{x}(t) \in f(t,x(t),U)$ with $f(t,x,U)$ convex for all $t,x$,
\item $\dot{x}(t)=f(t,x(t))+g(t,x(t))u(t)$, $u(t) \in U$ with $U$ convex,
\item uncontrolled dynamics $\dot{x}(t)=f(t,x(t))$,
\end{itemize}
as well as all controllability assumptions allowing the application
of the constrained Filippov-Wa$\dot{\mathrm z}$ewski Theorem; see, e.g., \cite[Corollary 3.2]{frankowska}
and the discussion around Assumption I in~\cite{quincampoix}.

\subsection{ROA via optimization}
The problem of computing ROA $X_0$ can be reformulated as follows:
\begin{equation}\label{qlp}
\begin{array}{rcll}
q^* & = & \sup & \lambda(\mathrm{spt}\:\mu_0) \\
&& \mathrm{s.t.} &  \delta_T\otimes\mu_T = \delta_0\otimes\mu_0 + {\mathcal L}'\mu \\
&&& \mu\geq 0,\: \mu_0\geq 0,\: \mu_T\geq 0\\
&&& \mathrm{spt}\:\mu \subset [0,T]\times X\times U \\
&&& \mathrm{spt}\:\mu_0 \subset X, \:\: \mathrm{spt}\:\mu_T \subset X_T,
\end{array}
\end{equation}
where are $f$, $X$, $X_T$, $U$ are given data and the supremum is over a vector of nonnegative
measures $(\mu,\mu_0,\mu_T) \in M([0,T]\times X\times U)\times M(X)\times M(X_T)$.
Problem (\ref{qlp}) is an infinite-dimensional optimization problem on the cone
of nonnegative measures.

The rationale behind problem~(\ref{qlp}) is as follows. The first constraint is the Liouville's equation~(\ref{eq:Liouville}) which, along with the nonnegativity constraints, ensures that any triplet of measures $(\mu_0,\mu,\mu_T)$ feasible in~(\ref{qlp}) corresponds to an initial, an occupation and a terminal measure generated by trajectories of the controlled ODE~(\ref{sys}) (or more precisely of the convexified differential inclusion~(\ref{eq:incl2})). The support constraint on the occupation measure $\mu$ ensures that these trajectories satisfy the state and control constraints; the support constraint on $\mu_T$ ensures that the trajectories end in the target set. Maximizing the volume of the support of the initial measure then yields an initial measure with the support equal to the ROA up to a set of zero volume\footnote{Even though the support of the initial measure attaining the maximum in~(\ref{qlp}) can differ from the ROA on the set of zero volume, the outer approximations obtained in Section~\ref{sec:convResults} are valid ``everywhere'', not ``almost everywhere''.} (in view of Assumption~\ref{relaxed}). This discussion is summarized in the following Lemma.

\begin{lemma}\label{lem:qlp}
	The optimal value of problem~(\ref{qlp}) is equal to the volume of the ROA $X_0$, that is, $q^* = \lambda(X_0)$.
\end{lemma}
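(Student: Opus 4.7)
The plan is to prove two inequalities: $q^* \ge \lambda(X_0)$ and $q^* \le \lambda(X_0)$, the latter relying crucially on Assumption~\ref{relaxed} and the correspondence lemma (Lemma in Appendix~A).

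For the lower bound, I would construct an explicit feasible triple $(\mu_0,\mu,\mu_T)$ whose objective equals $\lambda(X_0)$. Take $\mu_0$ to be the restriction of the Lebesgue measure to $X_0$, so that $\mathrm{spt}\,\mu_0 = \overline{X_0}$ and, since $X_0\subset X$ is bounded, $\lambda(\mathrm{spt}\,\mu_0) \ge \lambda(X_0)$. For each $x_0 \in X_0$ the set $\mathcal{X}(x_0)$ is nonempty by definition, so a measurable selection theorem (e.g.\ the Kuratowski--Ryll-Nardzewski selection theorem applied to the set-valued map $x_0 \mapsto \mathcal{X}(x_0)$) yields a measurable family of admissible trajectory-control pairs $(x(\cdot\!\mid\! x_0), u(\cdot\!\mid\! x_0))$. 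Then $\mu$ and $\mu_T$ defined by~(\ref{eq:ocmeas}) and~(\ref{eq:finmeas}) satisfy Liouville's equation~(\ref{eq:Liouville}) by the derivation culminating in~(\ref{test}), and by construction the support conditions in~(\ref{qlp}) hold. Hence this triple is feasible and witnesses $q^* \ge \lambda(X_0)$.

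For the upper bound, take any feasible triple $(\mu_0,\mu,\mu_T)$. By Lemma~\ref{lem:corr} in Appendix~A, this triple is generated by a family of admissible trajectories of the convexified inclusion~(\ref{eq:incl2}), i.e.\ for $\mu_0$-almost every $x_0$, there is a trajectory $x(\cdot\!\mid\! x_0) \in \bar{\mathcal{X}}(x_0)$. In other words, $\mu_0$ is concentrated on $\bar X_0$ up to a $\mu_0$-null set. Since $\bar X_0$ is closed (this is a standard consequence of compactness of $X$, $U$, $X_T$ and continuity of $f$, via Arzel\`a--Ascoli applied to solutions of the convex inclusion), we get $\mathrm{spt}\,\mu_0 \subset \bar X_0$. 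Therefore
\[
\lambda(\mathrm{spt}\,\mu_0) \;\le\; \lambda(\bar X_0) \;=\; \lambda(X_0),
\]
where the final equality is exactly Assumption~\ref{relaxed}. Taking the supremum over feasible triples gives $q^* \le \lambda(X_0)$.

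The main obstacle I expect is the upper bound, and specifically two technical points inside it: (i) invoking Lemma~\ref{lem:corr} cleanly to conclude that $\mu_0$-a.e.\ initial point admits a convexified admissible trajectory, and (ii) justifying that the exceptional $\mu_0$-null set does not spoil the support inclusion --- which is why one needs $\bar X_0$ to be closed (so that $\mathrm{spt}\,\mu_0$, being the smallest closed set of full $\mu_0$-measure, is contained in $\bar X_0$). The lower bound is comparatively routine, once the measurable selection is invoked; the only care needed there is to ensure that the resulting trajectory-control pair depends measurably on $x_0$, so that~(\ref{eq:ocmeas})--(\ref{eq:finmeas}) genuinely define Borel measures.
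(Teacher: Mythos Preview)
Your proposal is correct and follows essentially the same two-inequality strategy as the paper: the lower bound via constructing measures from admissible trajectories starting in $X_0$, and the upper bound via Lemma~\ref{lem:corr} combined with Assumption~\ref{relaxed}. The only notable difference is that you are more explicit on two technical points the paper treats informally: you invoke a measurable selection theorem for the lower bound (the paper simply asserts the existence of the occupation and final measures), and for the upper bound you argue via closedness of $\bar{X}_0$ that $\mathrm{spt}\,\mu_0\subset\bar{X}_0$, whereas the paper argues by contradiction that $\lambda(\mathrm{spt}\,\mu_0\setminus\bar{X}_0)=0$ without isolating this step. Your route through closedness is arguably cleaner, since Lemma~\ref{lem:corr} only guarantees admissible convexified trajectories for $\mu_0$-almost every initial point, and passing from that to a statement about the Lebesgue measure of $\mathrm{spt}\,\mu_0$ genuinely requires knowing that the exceptional set is contained in a closed set.
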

\begin{proof} By definition of the ROA, for any initial condition $x_0\in X_0$ there is an admissible trajectory in $\mathcal{X}(x_0)$. Therefore for any initial measure $\mu_0$ with $\mathrm{spt}\,\mu_0 \subset X_0$ there exist an occupation measure $\mu$ and a final measure $\mu_T$ such that the constraints of problem~(\ref{qlp}) are satisfied. Thus, $q^*\ge \lambda(X_0) = \lambda(\bar{X}_0)$, where the equality follows from Assumption~\ref{relaxed}.

\looseness-1
Now we show that $q^* \le \lambda(X_0)=\lambda(\bar{X}_0)$. For contradiction, suppose that a triplet of measures $(\mu_0,\mu,\mu_T)$ is feasible in~(\ref{qlp}) and that $\lambda(\mathrm{spt}\,\mu_0 \setminus \bar{X}_0) > 0$. From Lemma~\ref{lem:corr} in Appendix~A there is a family of admissible trajectories of the inclusion~(\ref{eq:incl2}) starting from $\mu_0$ generating the $(t,x)$-marginal of the occupation measure $\mu$ and the final measure $\mu_T$. However, this is a contradiction since no trajectory starting from $\mathrm{spt}\,\mu_0 \setminus \bar{X}_0$ can be admissible. Thus,  $\lambda(\mathrm{spt}\,\mu_0 \setminus \bar{X}_0) = 0$ and so $\lambda(\mathrm{spt}\,\mu_0) \le \lambda(\bar{X}_0)$. Consequently, $q^* \le \lambda(\bar{X}_0)=\lambda(X_0)$.
\end{proof}

\section{Primal infinite-dimensional LP on measures}\label{sec:primal}
The key idea behind the presented approach consists in replacing the direct maximization of the support of the initial measure $\mu_0$ by the maximization of the integral below the density of $\mu_0$ (w.r.t. the Lebesgue measure) subject to the constraint that the density be below one. This procedure is equivalent to maximizing the mass\footnote{The mass of the measure $\mu_0$ is defined as $\int_X1\, d\mu_0 = \mu_0(X)$.} of $\mu_0$ under the constraint that $\mu_0$ is dominated by the Lebesgue measure. 
This leads to the following infinite-dimensional LP:
\begin{equation}\label{rlp}
\begin{array}{rcll}
p^* & = & \sup & \mu_0(X) \\
&& \mathrm{s.t.} & \delta_T\otimes\mu_T = \delta_0\otimes\mu_0 + {\mathcal L}'\mu \\
&&& \mu\geq 0,\: \lambda\geq\mu_0\geq 0, \:\mu_T\geq 0\\
&&& \mathrm{spt}\:\mu \subset [0,T]\times X\times U\\
&&& \mathrm{spt}\:\mu_0 \subset X, \:\:\mathrm{spt}\:\mu_T \subset X_T,
\end{array}
\end{equation}
where the supremum is over a vector of nonnegative
measures $(\mu,\mu_0,\mu_T) \in M([0,T]\times X\times U)\times M(X)\times M(X_T)$.
In problem (\ref{rlp}) the constraint $\lambda\geq\mu_0$ means that $\lambda(A)\geq\mu_0(A)$
for all sets $A \subset X$. Note how the objective functions differ in problems (\ref{qlp})
and (\ref{rlp}).

The following theorem is then almost immediate.

\begin{theorem}\label{thm:1}
The optimal value of the infinite-dimensional LP problem (\ref{rlp}) is equal to the volume of the ROA $X_0$, that is, $p^*=\lambda(X_0)$.
Moreover, the supremum is attained with the $\mu_0$-component of the optimal solution equal to the restriction of the Lebesgue measure to the ROA $X_0$.
\end{theorem}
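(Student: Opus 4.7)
The strategy is a two-sided bounding argument that piggy-backs on Lemma~\ref{lem:qlp}. First I would establish the upper bound $p^*\le\lambda(X_0)$. For any feasible triple $(\mu_0,\mu,\mu_T)$ in~(\ref{rlp}), the constraint $\mu_0\le\lambda$ gives
\[
\mu_0(X)=\mu_0(\mathrm{spt}\,\mu_0)\le \lambda(\mathrm{spt}\,\mu_0).
\]
Since $(\mu_0,\mu,\mu_T)$ is also feasible for~(\ref{qlp}) (the constraint $\mu_0\le\lambda$ of~(\ref{rlp}) is the only extra requirement), the argument used in the proof of Lemma~\ref{lem:qlp} yields $\lambda(\mathrm{spt}\,\mu_0\setminus\bar{X}_0)=0$ and hence $\lambda(\mathrm{spt}\,\mu_0)\le\lambda(\bar{X}_0)=\lambda(X_0)$, where the last equality is Assumption~\ref{relaxed}. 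Taking the supremum gives $p^*\le\lambda(X_0)$.

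Next I would prove attainment (and thus the matching lower bound) by exhibiting the claimed optimizer. Set $\mu_0^\star:=\lambda|_{X_0}$, so that $\mu_0^\star\le\lambda$ and $\mu_0^\star(X)=\lambda(X_0)$. The remaining task is to produce $\mu^\star\in M([0,T]\times X\times U)$ and $\mu_T^\star\in M(X_T)$, both nonnegative with the required supports, such that Liouville's equation~(\ref{eq:Liouville}) holds. For every $x_0\in X_0$ there exists by definition an admissible trajectory-control pair $(x(\cdot\mid x_0),u(\cdot\mid x_0))$, and I would invoke a measurable selection theorem (e.g.\ the Kuratowski-Ryll-Nardzewski theorem applied to the multifunction $x_0\mapsto\mathcal{X}(x_0)\times\mathcal{U}$, using the fact that admissibility is a closed condition and $\mathcal{X}(x_0)\ne\emptyset$ on $X_0$) to pick such a pair measurably in $x_0$. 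Then define
\[
\mu^\star(A\times B\times C):=\int_{X_0}\mu(A\times B\times C\mid x_0)\,d\mu_0^\star(x_0),\quad \mu_T^\star(B):=\int_{X_0}I_B\bigl(x(T\mid x_0)\bigr)\,d\mu_0^\star(x_0),
\]
exactly as in~(\ref{eq:ocmeas})-(\ref{eq:finmeas}). Integrating the identity~(\ref{eq:basic}) against $\mu_0^\star$ produces~(\ref{test}) and hence~(\ref{eq:Liouville}); the support conditions are immediate from admissibility. Thus $(\mu_0^\star,\mu^\star,\mu_T^\star)$ is feasible in~(\ref{rlp}) with objective value $\lambda(X_0)$, which combined with the upper bound yields $p^*=\lambda(X_0)$ and attainment at the stated optimizer.

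The main obstacle is the measurable selection step: the set $X_0$ itself may be only Borel (not closed), and producing a jointly measurable selection of an admissible trajectory $x(\cdot\mid x_0)$ and control $u(\cdot\mid x_0)$ for every $x_0\in X_0$ requires care. If a pointwise selection is awkward, the same conclusion can be reached by working on the trajectory space: lift $\mu_0^\star$ to a probability-like measure on the (closed, by continuity of $f$ and compactness of $U$, hence compact in the uniform topology) set of admissible trajectory-control pairs via a measurable selection of the multifunction $x_0\mapsto\mathcal{X}(x_0)\times\mathcal{U}$ restricted to $X_0$, and then push forward to obtain $\mu^\star$ and $\mu_T^\star$. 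In either case, once the selection is in hand, Liouville's equation follows mechanically from~(\ref{eq:basic}) and Fubini, so the entire weight of the proof rests on this selection argument together with Assumption~\ref{relaxed}.
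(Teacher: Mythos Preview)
Your proposal is correct and follows essentially the same two-sided bounding argument as the paper: the upper bound via $\mu_0(X)\le\lambda(\mathrm{spt}\,\mu_0)\le\lambda(X_0)$ using Lemma~\ref{lem:qlp}, and the lower bound by exhibiting $\mu_0^\star=\lambda|_{X_0}$ together with the associated occupation and terminal measures. The only difference is one of rigor rather than strategy: the paper simply asserts feasibility of $\lambda|_{X_0}$ (having assumed earlier in Section~\ref{sec:OccupMeas} that the trajectory--control pair is a measurable function of $x_0$), whereas you explicitly flag the measurable-selection step and sketch how to justify it.
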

\begin{proof} Since the constraint set of problem (\ref{rlp}) is tighter than that of problem (\ref{qlp}), by Lemma~\ref{lem:qlp} we have that $\lambda(\mathrm{spt}\,\mu_0) \le \lambda(X_0)$ for any feasible $\mu_0$. From the constraint $\mu_0\le \lambda$ we get $\mu_0(X) = \mu_0(\mathrm{spt}\,\mu_0)\le \lambda(\mathrm{spt}\,\mu_0)\le\lambda(X_0)$ for any feasible $\mu_0$. Therefore $p^* \le \lambda(X_0)$. But by definition of the ROA $X_0$, the restriction of the Lebesgue measure to $X_0$ is feasible in~(\ref{rlp}), and so $p^* \ge \lambda(X_0)$. Consequently $p^* = \lambda(X_0)$.
\end{proof}

Now we reformulate problem~(\ref{rlp}) to an equivalent form more convenient for dualization and subsequent theoretical analysis. To this end, let us define the complementary measure (a slack variable) $\hat{\mu}_0\in M(X)$ such that the inequality $\lambda \ge \mu_0 \ge 0$ in~(\ref{rlp}) can be written equivalently
as the constraints $\mu_0+\hat{\mu}_0 = \lambda$, $\mu_0\ge0$, $\hat{\mu}_0\ge0$. Then problem~(\ref{rlp}) is equivalent to the infinite-dimensional primal LP
\begin{equation}\label{rrlp}
\begin{array}{rcll}
p^* & = & \sup & \mu_0(X) \\
&& \mathrm{s.t.} & \delta_T\otimes\mu_T = \delta_0\otimes\mu_0 + {\mathcal L}'\mu \\
&&& \mu_0 + \hat{\mu}_0 = \lambda\\
&&& \mu\geq 0, \: \mu_0\geq 0,\: \mu_T\geq 0,\: \hat{\mu}_0\ge0\\
&&& \mathrm{spt}\:\mu \subset [0,T]\times X\times U\\
&&& \mathrm{spt}\:\mu_0 \subset X, \:\:\mathrm{spt}\:\mu_T \subset X_T\\
&&& \mathrm{spt}\: \hat{\mu}_0 \subset X.
\end{array}
\end{equation}

\section{Dual infinite-dimensional LP on functions}\label{sec:dual}
In this section we derive a linear program dual to (\ref{rrlp}) (and hence to~(\ref{rlp})) on the space of continuous functions. A certain super-level set of one of the functions feasible in the dual LP will provide an outer approximation to the ROA $X_0$.

Consider the infinite-dimensional LP problem
\begin{equation}\label{vlp}
\begin{array}{rclll}
d^* & \hspace{-0.7em}= & \hspace{-0.5em} \inf  &  \hspace{-0.5em}\displaystyle\int_{X} w(x)\, d\lambda(x) \\
&& \hspace{-0.5em}\mathrm{s.t.} &  \hspace{-0.5em}\mathcal{L}v(t,x,u) \leq 0, &  \hspace{-1em}\forall\, (t,x,u) \in [0,T]\times X\times U \\
&&&  \hspace{-0.5em}w(x) \ge v(0,x) + 1, \:\: & \hspace{-1em}\forall\, x \in X \\
&&&  \hspace{-0.5em}v(T,x) \geq 0, \:\: & \hspace{-1em}\forall\, x \in X_T\\
&&&  \hspace{-0.5em}w(x) \geq 0, \:\: & \hspace{-1em}\forall\, x \in X,
\end{array}
\end{equation}
where the infimum is over $(v,w) \in C^1([0,T]\times X)\times C(X)$. The dual has the following interpretation: the constraint $\mathcal{L}v\le 0$ forces $v$ to decrease along trajectories and hence necessarily $v(0,x) \ge 0$ on $X_0$ because of the constraint $v(T,x)\ge 0$ on $X_T$. Consequently, $w(x) \ge 1$ on $X_0$. This instrumental observation is formalized in the following Lemma.

\begin{lemma}\label{lem:v0}
Let $(v,w)$ be a pair of function feasible in~(\ref{vlp}). Then $v(0,\cdot) \ge 0$ on $X_0$ and $w \geq 1$ on $X_0$.
\end{lemma}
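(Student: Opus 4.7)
The plan is to pick an arbitrary $x_0 \in X_0$ and exhibit an admissible trajectory witnessing membership, then plug it into the pointwise-in-time fundamental identity (\ref{eq:basic}) that was derived from the definition of the occupation measure. The dual constraints are designed precisely to force $v$ to be nonincreasing along every admissible trajectory, so this should be a one-line chain of inequalities.

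More concretely, fix $x_0 \in X_0$. By the definition of the ROA, the set $\mathcal{X}(x_0)$ is nonempty, so there exists an admissible pair $(x(\cdot \mid x_0), u(\cdot \mid x_0))$ satisfying $x(t \mid x_0) \in X$ on $[0,T]$, $u(t \mid x_0) \in U$ a.e., and $x(T \mid x_0) \in X_T$. Applying identity (\ref{eq:basic}) to the feasible $v \in C^1([0,T]\times X)$ gives
\[
v(T, x(T \mid x_0)) \;=\; v(0, x_0) \;+\; \int_0^T \mathcal{L}v\bigl(t, x(t\mid x_0), u(t\mid x_0)\bigr)\,dt.
\]
The first dual constraint $\mathcal{L}v \le 0$ on $[0,T]\times X \times U$ makes the integrand pointwise nonpositive along this trajectory, while the third constraint $v(T,\cdot) \ge 0$ on $X_T$ controls the left-hand side from below, since $x(T \mid x_0) \in X_T$. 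Rearranging yields
\[
v(0, x_0) \;\ge\; v(T, x(T \mid x_0)) \;\ge\; 0,
\]
which is the first claim. For the second claim, $X_0 \subset X$ (both by construction in (\ref{eq:ROA}) and by the trajectory constraint $x(t) \in X$), so the second dual constraint $w(x) \ge v(0,x) + 1$ on $X$ applies at $x_0$ and gives $w(x_0) \ge v(0,x_0) + 1 \ge 1$.

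There is essentially no obstacle: the lemma is really just a rereading of the dual constraints along trajectories, and it is stated for the classical ROA $X_0$ (not the relaxed $\bar{X}_0$), so no appeal to convexification, Filippov--Wa\.zewski, or Lemma~\ref{lem:corr} is needed, and Assumption~\ref{relaxed} plays no role here. The only mild point to flag is that (\ref{eq:basic}) was written for a specific trajectory (with the occupation measure $\mu(\cdot \mid x_0)$ encoding it), but since we took a genuine admissible trajectory in $\mathcal{X}(x_0)$ the chain of equalities leading to the integral form of (\ref{eq:basic}) is just the fundamental theorem of calculus applied to $t \mapsto v(t, x(t \mid x_0))$ together with the chain rule; we use this pointwise form directly rather than the measure-theoretic reformulation.
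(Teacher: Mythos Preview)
Your proof is correct and is essentially identical to the paper's own argument: pick an admissible trajectory from $x_0\in X_0$, use the fundamental theorem of calculus (equivalently identity~(\ref{eq:basic})) together with $\mathcal{L}v\le 0$ and $v(T,\cdot)\ge 0$ on $X_T$ to get $v(0,x_0)\ge 0$, then invoke $w\ge v(0,\cdot)+1$ on $X$. Your additional remarks about not needing Assumption~\ref{relaxed} or the relaxed inclusion are accurate but not part of the proof proper.
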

\begin{proof}
By definition of $X_0$, given any $x_0\in X_0$ there exists $u(t)$ such that $x(t)\in X$, $u(t)\in U$ for all $t\in [0, T]$ and $x(T)\in X_T$. Therefore, since $v(T,\cdot)\ge 0$ on $X_T$ and $\mathcal{L}v\le 0$ on $[0,T]\times X\times U$,
\begin{align*}
  0 \le v(T,x(T)) &=v(0,x_0) + \int_0^T \frac{d}{dt}v(t,x(t))\,d t\\ &= v(0,x_0) + \int_0^T \mathcal{L}v(t,x(t),u(t))\,d t\\& \le v(0,x_0)\le w(x_0) -1,
\end{align*}
where the last inequality follows from the second constraint of~(\ref{vlp}).
\end{proof}

Next, we have the following salient result:
\begin{theorem}\label{thm:noGap}
There is no duality gap between the primal infinite-dimensional LP problems (\ref{rlp}) and (\ref{rrlp}) and the infinite-dimensional dual LP problem (\ref{vlp}) in the sense that
$p^*=d^*$.
\end{theorem}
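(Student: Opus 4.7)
The plan is to establish weak duality $p^* \le d^*$ by a direct pairing of the primal and dual constraints, and then obtain the absence of a gap by invoking a standard infinite-dimensional LP duality theorem after verifying a weak-$*$ compactness condition on the primal feasible set.

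For weak duality, given any primal feasible quadruple $(\mu, \mu_0, \hat{\mu}_0, \mu_T)$ and any dual feasible pair $(v, w)$, I would pair the equality $\mu_0 + \hat{\mu}_0 = \lambda$ with $w \in C(X)$ to get
\[
\int_X w\,d\lambda \;=\; \int_X w\,d\mu_0 + \int_X w\,d\hat{\mu}_0 \;\ge\; \int_X w\,d\mu_0,
\]
using $w \ge 0$ on $X$ and $\hat{\mu}_0 \ge 0$. Then the pointwise inequality $w \ge v(0,\cdot)+1$ and $\mu_0 \ge 0$ give $\int_X w\,d\mu_0 \ge \mu_0(X) + \int_X v(0,\cdot)\,d\mu_0$. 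Pairing Liouville's equation~(\ref{eq:Liouville}) with $v$ rewrites the last term via (\ref{test}) as $\int_{X_T} v(T,\cdot)\,d\mu_T - \int \mathcal{L}v\,d\mu$, which is nonnegative because of the remaining dual constraints $v(T,\cdot)\ge 0$ on $X_T$ and $\mathcal{L}v \le 0$ on $[0,T]\times X\times U$ together with $\mu_T,\mu \ge 0$. Chaining the inequalities yields $\int_X w\,d\lambda \ge \mu_0(X)$, i.e., $d^* \ge p^*$.

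For the reverse inequality, I would cast (\ref{rrlp}) in abstract conic form $\sup\{\langle c, \xi\rangle : \mathcal{A}\xi = b,\; \xi \in K\}$, where $\xi := (\mu, \mu_0, \hat{\mu}_0, \mu_T)$ lies in the cone $K$ of nonnegative measures on the compact sets $[0,T]\times X\times U$, $X$, $X$, $X_T$, where $\mathcal{A}$ encodes Liouville's equation together with $\mu_0 + \hat{\mu}_0 = \lambda$ via pairing against test functions $(v,w) \in C^1([0,T]\times X)\times C(X)$, and where $\langle c, \xi\rangle := \mu_0(X)$. By the Riesz representation theorem, the cone $K$ is the topological dual of the cone of nonnegative continuous functions, and a routine computation shows that (\ref{vlp}) is the adjoint of this abstract primal. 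A standard no-gap theorem for infinite-dimensional LP, such as Theorem~3.10 of Anderson--Nash, then yields $p^* = d^*$ provided the primal is feasible with finite value and the image set $\{(\mathcal{A}\xi, \langle c,\xi\rangle) : \xi \in K\}$ is weak-$*$ closed.

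The main obstacle is verifying the closedness hypothesis. I would handle it by deriving uniform total-mass bounds on every feasible $\xi$: testing (\ref{test}) with $v \equiv 1$ yields $\mu_T(X_T) = \mu_0(X) \le \lambda(X)$, and with $v(t,x) = t$ yields $\mu([0,T]\times X\times U) = T\,\mu_T(X_T) \le T\lambda(X)$, while $\hat{\mu}_0(X) \le \lambda(X)$ follows from $\mu_0 + \hat{\mu}_0 = \lambda$. Hence every feasible $\xi$ has total variation uniformly bounded on its (fixed) compact support, so Banach--Alaoglu makes the feasible set weak-$*$ relatively compact. Because $\mathcal{A}$ and the objective $\xi \mapsto \mu_0(X) = \int_X 1\, d\mu_0$ are defined by pairing against fixed continuous (respectively $C^1$) test functions, they are weak-$*$ continuous in $\xi$, so the image set is the continuous image of a weak-$*$ compact set and is therefore closed. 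The abstract theorem then applies and $p^* = d^*$ follows.
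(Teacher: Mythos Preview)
Your weak-duality computation and the abstract conic setup are fine, and you correctly identify the Anderson--Nash closedness hypothesis as the crux. The gap is in how you verify it. The set that must be shown closed is the image of the \emph{entire} nonnegative cone $K$ under $\xi\mapsto(\mathcal{A}\xi,\langle c,\xi\rangle)$, not the image of the feasible set $\{\xi\in K:\mathcal{A}\xi=b\}$. All of your mass bounds (e.g., $\mu_0(X)\le\lambda(X)$ from $\mu_0+\hat{\mu}_0=\lambda$, and $\mu_T(X_T)=\mu_0(X)$, $\mu([0,T]\times X\times U)=T\mu_T(X_T)$ from testing Liouville with $v\equiv 1$ and $v=t$) use the constraint $\mathcal{A}\xi=b$ and therefore say nothing about a general $\xi\in K$. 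Your concluding sentence ``the image set is the continuous image of a weak-$*$ compact set'' is thus a non sequitur: compactness of the feasible set gives only that the slice $\{b\}\times[\alpha,\beta]$ is closed, not the full image $P=\{(\mathcal{A}\xi,\langle c,\xi\rangle):\xi\in K\}$ that Theorem~3.10 requires.

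The missing idea, which is what the paper does, is to extract mass bounds from the \emph{convergence of $\mathcal{A}\xi_k$} rather than from feasibility. Given any sequence $\xi_k=(\mu^k,\mu_0^k,\mu_T^k,\hat{\mu}_0^k)\in K$ with $(\mathcal{A}\xi_k,\langle c,\xi_k\rangle)\to(\nu,a)$, one pairs $\mathcal{A}\xi_k$ against well-chosen test pairs $z=(v,w)$ so that $\langle\mathcal{A}\xi_k,z\rangle$ dominates a positive combination of the total masses of the component measures; since $\langle\mathcal{A}\xi_k,z\rangle\to\langle\nu,z\rangle<\infty$, the masses are bounded along the sequence. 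Then Banach--Alaoglu yields a weak-$*$ convergent subsequence $\xi_{k_i}\to\xi\in K$, and weak-$*$ continuity of $\mathcal{A}$ and of $\langle c,\cdot\rangle$ gives $(\nu,a)=(\mathcal{A}\xi,\langle c,\xi\rangle)\in P$. Once you reroute your boundedness argument this way, your plan matches the paper's.
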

\begin{proof}
To streamline the exposition, let
\begin{align*}
{\mathcal C} &:=C([0,T]\times X\times U)\times C(X)\times C(X_T)\times C(X),\\
{\mathcal M} &:=M([0,T]\times X\times U)\times M(X)\times M(X_T)\times M(X),
\end{align*}
and let $\mathcal{K}$ and $\mathcal{K}'$ denote the positive cones of $\mathcal{C}$ and $\mathcal{M}$ respectively. Note that the cone $\mathcal{K}'$ of nonnegative measures of $\mathcal M$ can be identified with the topological dual of the cone $\mathcal{K}$ of nonnegative continuous functions of $\mathcal C$. The cone $\mathcal{K}'$ is equipped with the weak-* topology; see \cite[Section 5.10]{luenberger}. Then, the LP problem (\ref{rrlp}) can be rewritten as
\begin{equation}\label{plp}
\begin{array}{rcll}
p^* & = & \sup & \langle\gamma,\:c\rangle \\
&& \mathrm{s.t.} & {\mathcal A}'\gamma = \beta \\
&&& \gamma \in \mathcal{K'},
\end{array}
\end{equation}
where the infimum is over the vector $\gamma:=(\mu,\mu_0,\mu_T,\hat{\mu}_0)$,
the linear operator ${\mathcal A}' : \mathcal{K'} \to C^1([0,T]\times X)' \times M(X)$
is defined by 
$
{\mathcal A}'\gamma := \left(-{\mathcal L}'\mu - \delta_0\otimes\mu_0 + \delta_T\otimes\mu_T\:\:,\:\: \mu_0+\hat{\mu}_0 \right),
$
the right hand side of the equality constraint in~(\ref{plp}) is the vector of measures
$ \beta := (0\,,\, \lambda) \in M([0,T]\times X)\times M(X),$
the vector function in the objective is $c := (0,1,0,0) \in {\mathcal C}$,
so the objective function itself is
\[
	\langle\gamma,\:c\rangle = \int_X d\mu_0 = \mu_0(X).
\]
The LP problem (\ref{plp}) can be interpreted
as a dual to the LP problem
\begin{equation}\label{dlp}
\begin{array}{rcll}
d^* & = & \inf & \langle \beta,\:z\rangle \\
&& \mathrm{s.t.} & {\mathcal A}(z)-c \in \mathcal{K},
\end{array}
\end{equation}
where the infimum is over $z:=(v,w) \in C^1([0,T]\times X)\times C(X)$, and the linear operator ${\mathcal A} : C^1([0,T]\times X)\times C(X) \to {\mathcal C}$
is defined by
\[
{\mathcal A}z := (-\mathcal{L}v,\:\: w-v(0,\cdot),\:\:v(T,\cdot),\:\: w)
\]
and satisfies the adjoint relation $\langle{\mathcal A}'\gamma,\:z\rangle \:=\: \langle\gamma,\:{\mathcal A}z\rangle$. The LP problem (\ref{dlp})  is exactly the LP problem (\ref{vlp}).

To conclude the proof we use an argument similar to that of \cite[Section C.4]{lasserre}.
From \cite[Theorem 3.10]{anderson} there is no duality gap between LPs (\ref{plp})
and (\ref{dlp}) if the supremum $p^*$ is finite and 
the set $P:=\{({\mathcal A}'\gamma,\:\langle \gamma,\:c\rangle) : \gamma \in \mathcal{K'}\}$
is closed in the weak-* topology of $\mathcal{K'}$. The fact that $p^*$ is finite
follows readily from the constraint $\mu_0 + \hat{\mu}_0 = \lambda$, $\hat{\mu}_0\ge 0$, and from compactness of $X$.
To prove closedness, we first remark that ${\mathcal A}'$ is weakly-* continuous\footnote{The weak-* topology on $C^1([0,T]\times X)'\times M(X)$ is induced by the standard topologies on $C^1$ and $C$ -- the topology of uniform convergence of the function and its derivative on $C^1$ and the topology of uniform convergence on $C$.} since ${\mathcal A}(z) \in {\mathcal C}$
for all $z \in C^1([0,T]\times X)\times C(X)$. Then we consider
a sequence $\gamma_k = (\mu^k,\mu_0^k,\mu_T^k,\hat{\mu}_0^k) \in \mathcal{K'}$ and we want to show that its
accumulation point $(\nu, a) := \lim_{k\to\infty}({\mathcal A}'\gamma_k,\:\langle \gamma_k,\:c\rangle)$
belongs to $P$, where $\nu \in C^1([0,T]\times X)' \times M(X)$ and $a \in \mathbb{R}$. To this end, consider first the test function $z_1 = (T-t, 1)$ which gives $\langle \mathcal{A}'\gamma_k,z_1\rangle = \mu^k(0,T\times X\times U) + \mu_0^k(X) + \hat{\mu}_0^k(X) \to \langle \nu, z_1\rangle < \infty$; since the measures are nonnegative, this implies that the sequences of measures $\mu^k$, $\mu_0^k$ and $\hat{\mu}_0^k$ are bounded. Next, taking the test function $z_2 = (1,1)$ gives $\langle \mathcal{A}'\gamma_k,z_2\rangle =  \mu_T(X) + \hat{\mu}_0(X) \to \langle \nu,z_2 \rangle < \infty $; this implies that the sequence $\mu_T^k$ is bounded as well. Thus, from the weak-* compactness of the unit ball (Alaoglu's Theorem \cite[Section 5.10, Theorem~1]{luenberger})
there is a subsequence $\gamma_{k_i}$ that converges \mbox{weakly-*}
to an element $\gamma \in \mathcal{K'}$ so that
$\lim_{i\to\infty}({\mathcal A}'\gamma_{k_i},\:\langle \gamma_{k_i},\:c\rangle) \in P$ by continuity of $\mathcal{A}'$.
\end{proof}


Note that, by Theorem~\ref{thm:1}, the supremum in the primal LPs~(\ref{rlp}) and (\ref{rrlp}) is attained (by the restriction of the Lebesgue measure to $X_0$). In contrast, the infimum in the dual LP (\ref{vlp}) is not attained in $C^1([0,T]\times X)\times C(X)$, but there exists a sequence of feasible solutions to~(\ref{vlp}) whose $w$-component converges to the discontinuous indicator $I_{X_0}$ as we show next.

Before we state our convergence results, we recall the following types of convergence of a sequence of functions $w_k:X\to \mathbb{R}$ to a function $w:X\to \mathbb{R}$ on a compact set $X \subset {\mathbb R}^n$.
As $k\to\infty$, the functions $w_k$ converge to $w$:
\begin{itemize}
\item in $L^1$ norm if $\int_X |w_k-w|\,d\lambda \to 0$,
\item in Lebesgue measure if $\lambda(\{x\: :\: |w_k(x)-w(x)|\ge \epsilon  \}) \to 0\ \forall\,\epsilon>0$,
\item almost everywhere if $\exists\, B\subset X$, $\lambda(B)=0$, such that $w_k \to w$ pointwise on $X\backslash B$,
\item almost uniformly if $\forall\, \epsilon > 0, \exists\, B\subset X$, $\lambda(B) < \epsilon$, such that $w_k\to w$ uniformly on $X\backslash B$. 
\end{itemize}
We also recall that convergence in $L^1$ norm implies convergence in Lebesgue measure and that almost uniform convergence implies convergence almost everywhere (see \cite[Theorems 2.5.2 and 2.5.3]{ash}). Therefore we will state our results in terms of the stronger notions of $L^1$ norm and almost uniform convergence. 

\begin{theorem}\label{thm:2}
There is a sequence of feasible solutions to the dual LP (\ref{vlp}) such that its $w$-component converges from above to $I_{X_0}$ in $L^1$ norm and almost uniformly.
\end{theorem}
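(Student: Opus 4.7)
The plan is to combine Theorems~\ref{thm:1} and \ref{thm:noGap} with the pointwise lower bound from Lemma~\ref{lem:v0} and then invoke standard measure-theoretic convergence principles (convergence in measure, Egorov's theorem) to upgrade $L^1$ convergence to almost uniform convergence along a subsequence.

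First, I would observe that Theorem~\ref{thm:1} and Theorem~\ref{thm:noGap} together give $d^* = p^* = \lambda(X_0) = \int_X I_{X_0}\, d\lambda$. By definition of infimum, there exists a sequence $(v_k, w_k) \in C^1([0,T]\times X) \times C(X)$ feasible in~(\ref{vlp}) with $\int_X w_k\, d\lambda \to \lambda(X_0)$.

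Second, for each $k$, Lemma~\ref{lem:v0} gives $w_k \ge 1$ on $X_0$, while feasibility in~(\ref{vlp}) gives $w_k \ge 0$ on $X$. Together these yield the pointwise bound $w_k(x) \ge I_{X_0}(x)$ for every $x \in X$. Consequently $w_k - I_{X_0} \ge 0$ on $X$, and
\[
\|w_k - I_{X_0}\|_{L^1(X)} \;=\; \int_X (w_k - I_{X_0})\, d\lambda \;=\; \int_X w_k\, d\lambda - \lambda(X_0) \;\longrightarrow\; 0.
\]
This establishes convergence from above in $L^1$ norm.

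Third, to obtain almost uniform convergence, I would use that $L^1$ convergence on the finite-measure set $X$ implies convergence in Lebesgue measure. By a standard result (see, e.g., \cite[Theorems~2.5.2 and~2.5.3]{ash}), from any sequence converging in measure on a finite-measure space one can extract a subsequence that converges almost everywhere. Since $X$ is compact and hence $\lambda(X) < \infty$, Egorov's theorem applies to this a.e.-convergent subsequence and upgrades its convergence to almost uniform. Relabeling and keeping this subsequence as the sequence $(v_k, w_k)$ preserves $L^1$ convergence from above (any subsequence of an $L^1$-convergent sequence is $L^1$-convergent with the same limit) while adding almost uniform convergence, finishing the proof.

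The only nontrivial step is checking that a single sequence can realize both convergence modes simultaneously; this is resolved by the subsequence extraction just described, and no further work on the structure of the feasible pairs $(v_k, w_k)$ is needed since existence of the minimizing sequence is already guaranteed by the absence of a duality gap established in Theorem~\ref{thm:noGap}.
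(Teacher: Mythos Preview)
Your proposal is correct and follows essentially the same route as the paper: combine Theorems~\ref{thm:1} and~\ref{thm:noGap} to get a minimizing sequence with $\int_X w_k\,d\lambda\to\lambda(X_0)$, use Lemma~\ref{lem:v0} together with $w_k\ge 0$ on $X$ to obtain $w_k\ge I_{X_0}$, deduce $L^1$ convergence of the nonnegative difference, and then pass to a subsequence for almost uniform convergence. The only cosmetic difference is that you spell out the chain $L^1\Rightarrow$ convergence in measure $\Rightarrow$ a.e.\ subsequence $\Rightarrow$ Egorov, whereas the paper compresses this into a single citation of \cite[Theorems~2.5.2 and~2.5.3]{ash}.
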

\begin{proof}
By Theorem~\ref{thm:1}, the optimal solution to the primal is attained by the restriction of the Lebesgue measure to $X_0$. Consequently,
\begin{equation}\label{eq:Thm2_1}
p^* = \int_X I_{X_0}(x) d\lambda(x). 
\end{equation}
By Theorem~\ref{thm:noGap}, there is no duality gap ($p^* = d^*$), and therefore there exists a sequence $(v_k,w_k)\in C^1([0,T]\times X)\times C(X)$ feasible in~(\ref{vlp}) such that
\begin{equation}\label{eq:Thm2_2}
p^* = d^* = \lim_{k\to \infty} \int_{X} w_k(x)\, d\lambda(x).
\end{equation}
From Lemma \ref{lem:v0} we have $w_k\geq 1$ on $X_0$ and
since $w_k \ge 0$ on~$X$ by the fourth constraint of~(\ref{vlp}), we have $w_k \ge I_{X_0}$ on~$X$ for all $k$. Thus, subtracting~(\ref{eq:Thm2_1}) from~(\ref{eq:Thm2_2}) gives
\[
\lim_{k\to\infty}\int_X(w_k(x)-I_{X_0}(x))\,d\lambda(x) = 0,
\]
where the integrand is nonnegative. Hence $w_k$ converges to $I_{X_0}$ in $L^1$ norm. From \cite[Theorems 2.5.2 and 2.5.3]{ash} there exists a subsequence converging almost uniformly.
\end{proof}

\section{LMI relaxations and SOS approximations}\label{sec:LMI}
In this section we show how the infinite-dimensional LP problem~(\ref{rrlp}) can be approximated by a hierarchy of LMI problems with the approximation error vanishing as the relaxation order tends to infinity. The dual LMI problem is a sum-of-squares (SOS) problem and yields a converging sequence of outer approximations to the ROA.

The measures in equation~(\ref{test}) (or (\ref{eq:Liouville})) are fully determined by their values on a family of functions whose span is dense in $C^1([0,T]\times X)$. Hence, since all sets are assumed compact, the family of test functions in~(\ref{test}) can be restricted to any polynomial basis (since polynomials are dense in the space of continuous
functions on compact sets equipped with the supremum norm). The basis of our choice is the set of all monomials. This basis is convenient for subsequent exposition and is employed by existing software (e.g., \cite{glopti, yalmip}). Nevertheless, another polynomial basis may be more appropriate from a numerical point of view (see the Conclusion for a discussion).

Let ${\mathbb R}_k[x]$ denote the vector space of real multivariate polynomials of total degree less than or equal to $k$.
Each polynomial $p(x) \in {\mathbb R}_k[x]$ can be expressed in the monomial basis as
\[
p(x) = \sum_{|\alpha|\leq k} p_{\alpha}x^{\alpha} = \sum_{|\alpha|\leq k} p_{\alpha}(x_1^{\alpha_1}\cdot\ldots\cdot x_n^{\alpha_n}),
\]
where $\alpha$ runs over the multi-indices (vectors of nonnegative integers) such that $|\alpha| = \sum_{i=1}^n \alpha_i \leq k$. A polynomial $p(x)$ is identified with its
vector of coefficients $p:=(p_{\alpha})$ whose entries are indexed by $\alpha$.
Given a vector of real numbers $y:=(y_{\alpha})$ indexed by $\alpha$, we define
the linear functional $L_y : {\mathbb R}_k[x]\to {\mathbb R}$ such that
\[
L_y(p) := p'y := \sum_{\alpha} p_{\alpha} y_{\alpha},
\]
where the prime denotes transposition\footnote{In this paper, the operator prime is also used 
to refer to the adjoint of a linear operator. If the linear operator is real-valued and
finite-dimensional, the adjoint operator coincides with the transposition operator.}.
When entries of $y$ are moments of a measure $\mu$, i.e.,
\[
y_{\alpha} = \int x^{\alpha} d\mu(x),
\]
the linear functional models the integration of a polynomial with respect to $\mu$, i.e.,
\[
L_y(p) = \langle\mu,\:p\rangle = \int p(x)\,d\mu(x) = \sum_{\alpha} p_{\alpha}
\int x^{\alpha} d\mu(x) = p' y.
\]
When this linear functional acts on the square of a polynomial $p$ of degree $k$, it becomes a quadratic form
in the polynomial coefficients space, and we denote by $M_k(y)$ and call the \emph{moment
matrix} of order $k$ the matrix of this quadratic form, which is symmetric and linear in $y$:
\[
L_y(p^2) = p' M_k(y) p.
\]
Finally, given a polynomial $g(x) \in {\mathbb R}[x]$ we define the \emph{localizing matrix} $M_k(g,\,y)$
by the equality
\[
L_y(g p^2) = p' M_k(g,\,y) p.
\]
The matrix $M_k(g,\,y)$ is also symmetric and linear in $y$. For a detailed exposition and examples of moment and localizing matrices see~\cite[Section 3.2.1]{lasserre}.

Let $y$, $y_0$, $y_T$ and $\hat{y}_0$ respectively denote the sequences of moments of measures $\mu$,
$\mu_0$, $\mu_T$ and $\hat{\mu}_0$ such that the constraints in problem (\ref{rrlp}) are satisfied.
Then it follows that these sequences satisfy an infinite-dimensional
linear system of equations corresponding to the equality constraints of problem~(\ref{rrlp}) written explicitly as
\begin{align*}\int_{X_T} v(T,x)\,d\mu_T(x) - \int_X v(0,x)\,d\mu_0(x) - \int_{[0,T]\times X \times U} \hspace{-2em}\mathcal{L}v(t,x,u)\, d\mu(t,x,u)  = 0,\end{align*}
\[\int_X w(x)\,d\mu_0(x) + \int_X w(x)\,d\hat{\mu}_0(x) = \int_X w(x)\,d\lambda(x).\]
For the particular choice of test functions $v(t,x) = t^{\alpha}x^{\beta}$ and $w(x) = x^{\beta}$
for all $\alpha \in {\mathbb N}$ and $\beta \in {\mathbb N}^n$, let us denote by
\[
A_k(y,y_0,y_T,\hat{y}_0) = b_k
\]
the finite-dimensional truncation of this system obtained by considering only the test functions of total degree less than or equal to $2k$. Let further \[\displaystyle {d_X}_i := \Big\lceil \frac{\deg {g^X_i}}{2} \Big\rceil,\:\: \displaystyle {d_U}_i := \Big\lceil \frac{\deg {g^U_i}}{2} \Big\rceil,\:\: \displaystyle {d_T}_i := \Big\lceil \frac{\deg {g^{X_T}_i}}{2} \Big\rceil,\]
where $\mathrm{deg}$ denotes the degree of a polynomial. The primal LMI relaxation of order $k$ then reads
\begin{equation}\label{plmi}
\begin{array}{rcllll}
p^*_k & =  &\max & (y_0)_0 \\
&& \mathrm{s.t.} & A_k(y,y_0,y_T,\hat{y}_0) = b_k \\
&&& \hspace{0em} M_k(y) \succeq 0, & \hspace{0em}M_{k-{d_X}_i}({g^X_i},y) \succeq 0, &i\in\mathbb{Z}_{[1,n_X]}\\
&&& \hspace{0em} M_{k-1}(t(T\!-\!t),y) \succeq 0, &  \hspace{0em}M_{k-{d_U}_i}(g^U_i,y) \succeq 0, &i\in\mathbb{Z}_{[1,n_U]} \\
&&& \hspace{0em} M_k(y_0) \succeq 0, &\hspace{0em} M_{k-{d_X}_i}({g^X_i},y_0) \succeq 0, &i\in\mathbb{Z}_{[1,n_X]} \\
&&& \hspace{0em} M_k(y_T) \succeq 0, &\hspace{0em} M_{k-{d_T}_i}({g^{X_T}_i},y_T) \succeq 0, &i\in\mathbb{Z}_{[1,n_T]}\\
&&& \hspace{0em} M_k(\hat{y}_0) \succeq 0, & \hspace{0em} M_{k-{d_X}_i}({g^X_i},\hat{y}_0) \succeq 0, &i\in\mathbb{Z}_{[1,n_X]}, 
\end{array}
\end{equation}
where the notation $\succeq 0$ stands for positive semidefinite
and the minimum is over sequences $(y, y_0, y_T,\hat{y}_0)$ truncated to degree $2k$. The objective function is the first element (i.e., the mass) of the truncated moment sequence $y_0$ corresponding to the initial measure; the equality constraint captures the two equality constraints of problem~(\ref{rrlp}) evaluated on monomials of degree up to $2k$; and the LMI constraints involving the moment and localizing matrices capture the nonnegativity and support constraints on the measures, respectively. Note that both the equality constraint and the LMI constraints are necessarily satisfied by the moment sequence of any vector of measures feasible in (\ref{rrlp}). The constraint set of~(\ref{plmi}) is therefore looser than that of (\ref{rrlp}); however, the discrepancy between the two constraint sets monotonically vanishes as the relaxation order $k$ tends to infinity (see Corollary~\ref{cor:pdconv} below).

Problem (\ref{plmi}) is a semidefinite program (SDP),
where a linear function is minimized subject to convex LMI
constraints, or equivalently a finite-dimensional LP in the cone of positive semidefinite matrices.

Without loss of generality we make the following standard assumption for the reminder of this section.

\begin{assumption}\label{compact}
One of the polynomials defining the sets $X$, $U$ respectively $X_T$,
is equal to ${g^X_i}(x) = R_X-\|x\|^2_2$, ${g^U_i}(u) = R_U-\|u\|^2_2$ respectively ${g^{X_T}_i}(x)=R_T-\|x\|^2_2$ for some constants $R_X\ge0$, $R_U\ge0$ respectively $R_T\ge0$.
\end{assumption}

Assumption \ref{compact} is without loss of generality since the sets $X$, $U$ and $X_T$ are bounded, and therefore redundant ball constraints of the form $R_X-\|x\|^2_2 \ge0$, $R_U-\|u\|^2_2\ge0$ and $R_T-\|x\|^2_2\ge0$ can always be added to the description of the sets $X$, $U$ and $X_T$ for sufficiently large $R_X$, $R_U$ and~$R_T$.

\looseness-1
The dual to the SDP problem (\ref{plmi}) is given by
\begin{equation}\label{dlmi}
\begin{array}{rcll}
d^*_k & = & \inf & {w}' l \\\vspace{0.5mm}
&& \mathrm{s.t.} &  -\mathcal{L}v(t,x,u)
= p(t,x,u) + q_0(t,x,u) t(T-t) \\\vspace{1mm}
&&&  + \sum_{i=1}^{n_X} q_i(t,x,u) {g^X_i}(x) + \sum_{i=1}^{n_U} r_i(t,x,u) {g^U_i}(x) \\ \vspace{1mm}
&&& w(x)-v(0,x)-1 = p_0(x) + \sum_{i=1}^{n_X} {q_0}_i(x) {g^X_i}(x) \\\vspace{1mm}
&&& v(T,x) = p_T(x) + \sum_{i=1}^{n_T} {q_T}_i(x) {g^{X_T}_i}(x) \\
&&& w(x) = s_0(x) + \sum_{i=1}^{n_X} {s_0}_i(x) {g^X_i}(x),
\end{array}
\end{equation}
where $l$ is the vector of the moments of the Lebesgue measure over $X$ indexed in the same basis in which the polynomial $w(x)$ with coefficients $w$ is expressed. The minimum is over polynomials $v(t,x) \in {\mathbb R}_{2k}[t,x]$ and $w\in \mathbb{R}_{2k}[x]$, and polynomial sum-of-squares $p(t,x,u)$, $q_i(t,x,u)$, $i\in\mathbb{Z}_{[0,n_X]}$, $r_i(t,x,u)$, $i\in\mathbb{Z}_{[1,n_U]}$, $p_0(x)$, $p_T(x)$, ${q_0}_i(x)$, ${q_T}_i(x)$, $s_0(x)$, ${s_0}_i(x)$, $i\in\mathbb{Z}_{[1,n_X]}$ of appropriate degrees. The constraints that polynomials are sum-of-squares can be written explicitly as LMI constraints~(see, e.g., \cite{lasserre}), and the objective is linear in the coefficients of the polynomial $w(x)$; therefore problem~(\ref{dlmi}) can be formulated as an SDP.

\begin{theorem}\label{lem:noGapRelax}
There is no duality gap between primal LMI problem (\ref{plmi}) and dual LMI problem (\ref{dlmi}),
i.e., $p^*_k = d^*_k$.
\end{theorem}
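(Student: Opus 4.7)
The plan is to invoke the standard strong duality theorem for semidefinite programming: since (\ref{plmi}) and (\ref{dlmi}) form a primal--dual SDP pair via Lasserre's moment--SOS duality, it suffices to verify that the primal has finite optimal value and that the dual is strictly feasible (satisfies Slater's condition). Boundedness of the primal is immediate: testing the second equality constraint of (\ref{rrlp}) against the constant polynomial $1$ yields $(y_0)_0+(\hat y_0)_0=\lambda(X)$, and the LMI $M_k(\hat y_0)\succeq 0$ forces $(\hat y_0)_0\ge 0$, so $p^*_k\le\lambda(X)<\infty$. Feasibility of the primal is also clear, e.g.\ by taking the truncated moments of $\hat\mu_0=\lambda|_X$ and $\mu_0=\mu=\mu_T=0$.

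For Slater feasibility of the dual I would try
\[
v(t,x):=M(T-t)+\epsilon,\qquad w(x):=MT+\epsilon+1+\delta,
\]
with $M$ large and $\epsilon,\delta>0$ small. The four polynomial quantities appearing on the left-hand sides of the SOS identities in (\ref{dlmi}) then reduce to strictly positive constants: $-\mathcal{L}v=M$ on $[0,T]\times X\times U$, $w(x)-v(0,x)-1=\delta$ on $X$, $v(T,x)=\epsilon$ on $X_T$, and $w(x)=MT+\epsilon+1+\delta$ on $X$. By Assumption~\ref{compact} the relevant quadratic modules are Archimedean, so Putinar's Positivstellensatz produces weighted-SOS decompositions of each of these positive constants of degree at most $2k$ as soon as $k$ is large enough (small $k$ can be handled by an ad hoc direct argument since the right-hand sides are constants).

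The technical core is to upgrade each Putinar decomposition into one whose individual Gram matrices are strictly positive definite, which is precisely Slater's condition for the SDP reformulation of the SOS cone. I would accomplish this by a standard perturbation trick: for each SOS multiplier slot indexed by $\star$, let $h_\star$ denote the full-basis sum-of-squared-monomials $\sum_\gamma z_\gamma^{\star\,2}$ in the appropriate variables and degree, which has the identity as its Gram matrix. Apply Putinar instead to the given positive constant \emph{minus} $\eta\sum_\star g_\star h_\star$ (where $g_\star$ is the corresponding constraint polynomial, taken to be $1$ for the pure SOS term), which remains strictly positive on the relevant set for $\eta>0$ small, to obtain SOS multipliers $\sigma'_\star$; then add $\eta h_\star$ back to each $\sigma'_\star$. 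Each resulting Gram matrix is the sum of a PSD matrix and $\eta I$, hence strictly positive definite, as required.

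With the primal bounded and the dual Slater-feasible, the standard strong duality theorem for conic semidefinite programs yields $p^*_k=d^*_k$. The main obstacle I anticipate is executing this perturbation--Putinar step cleanly and tracking degree bookkeeping simultaneously across all four SOS identities of (\ref{dlmi}); the Archimedean property guaranteed by Assumption~\ref{compact} and the freedom to choose $M$ large and $\epsilon,\delta,\eta$ small are what make this step go through.
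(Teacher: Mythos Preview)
Your overall strategy---establish primal boundedness and dual Slater feasibility, then invoke conic strong duality---is sound, and your argument that $p_k^*\le\lambda(X)$ is correct. The gap is in the Slater construction. After your perturbation, the polynomial you must decompose is $c-\eta\sum_\star g_\star h_\star$, which is no longer a constant but a genuine degree-$2k$ polynomial. Putinar's theorem guarantees a weighted-SOS representation, but gives \emph{no control on the degrees} of the multipliers $\sigma'_\star$ it produces: they may far exceed $2(k-d_\star)$, so adding $\eta h_\star$ back does not yield a point feasible in the degree-$2k$ relaxation (\ref{dlmi}). Your remark that ``small $k$ can be handled by an ad hoc direct argument since the right-hand sides are constants'' misses that the \emph{perturbed} quantity is not a constant; and the trivial decomposition of a constant has all multipliers except $\sigma_0$ equal to zero, hence with singular Gram matrices. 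A direct construction of a strict SOS decomposition of a positive constant at every relaxation order may be possible under Assumption~\ref{compact}, but it is not routine and you have not supplied one.

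The paper sidesteps this obstacle by a different, non-constructive route. It vectorizes (\ref{plmi}) and (\ref{dlmi}) into standard conic form $\{\mathbf{A}\mathbf{y}=\mathbf{b},\ \mathbf{e}+\mathbf{D}\mathbf{y}\in\mathbf{K}\}$, observes that $\mathbf{D}$ has full column rank (moment matrices vanish only for the zero moment vector), and shows the primal feasible set is nonempty and \emph{bounded}: the masses are bounded via $\mu_0+\hat\mu_0=\lambda$ and $T<\infty$, the even moments via the ball-constraint localizing matrices from Assumption~\ref{compact}, and all moments via positive semidefiniteness of the moment matrices. A lemma of alternatives (either the dual has an interior point, or the primal feasible set contains a ray) then yields the existence of a dual Slater point abstractly, uniformly in $k$, without ever exhibiting one---and without invoking Putinar at all.
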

\begin{proof}
See Appendix~D.
\end{proof}

\section{Outer approximations and convergence results}\label{sec:convResults}
In this section we show how the dual LMI problem~(\ref{dlmi}) gives rise to a sequence of outer approximations to the ROA $X_0$ with a guaranteed convergence. In addition, we prove the convergence of the primal and dual optimal values  $p_k^*$ and $d_k^*$ to the volume of the ROA, and the convergence of the $w$-component of an optimal solution to the dual LMI problem~(\ref{dlmi}) to the indicator function of the ROA $I_{X_0}$.

Let the polynomials $(w_k, v_k)$, each of total degree at most $2k$, denote an optimal solution to the problem $k^{\mathrm{th}}$ order dual SDP approximation~(\ref{dlmi}) and let $\bar{w}_k := \min_{i\le k}w_i$ and $\bar{v}_k := \min_{i\le k}v_i$ denote their running minima. Then, in view of Lemma~\ref{lem:v0} and the fact that any feasible solution to (\ref{dlmi}) is feasible in (\ref{vlp}), the sets
\begin{equation}\label{eq:X0k}
{X_0}_k := \{x\in X : v_k(0,x)\ge 0 \}
\end{equation}
and
\begin{equation}\label{eq:barX0k}
  \bar{X}_{0k} := \{x\in X : \bar{v}_k(0,x)\ge 0 \}
\end{equation}
provide outer approximations to the ROA; in fact, the inclusions ${X_0}_k\supset \bar{X}_{0k} \supset X_0$ hold for all $k\in\{1,2,\ldots\}$.

Our first convergence result proves the convergence of $w_k$ and $\bar{w}_k$ to the indicator function of the ROA $I_{X_0}$.

\begin{theorem}\label{thm:dualConvFun}
Let $w_k \in {\mathbb R}_{2k}[x]$ denote the $w$-component of an optimal solution to the dual LMI problem (\ref{dlmi}) and let $\bar{w}_k(x) =\min_{i\le k} w_i(x)$. Then $w_k$ converges from above to $I_{X_0}$ in $L^1$ norm and  $\bar{w}_k$ converges from above to $I_{X_0}$ in $L^1$ norm and almost uniformly.
\end{theorem}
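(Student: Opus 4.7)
The plan is to combine three ingredients: convergence of the LMI optimal values $d_k^* \to \lambda(X_0)$; the pointwise bound $w_k \ge I_{X_0}$ on $X$ coming from feasibility of $(v_k, w_k)$ in~(\ref{vlp}) and Lemma~\ref{lem:v0}; and a squeeze-plus-Egorov argument to handle the running minimum. The hard work lives in the first ingredient; the rest is a routine $L^1$ estimate followed by a standard measure-theoretic upgrade.

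\emph{Step 1: $d_k^* \to \lambda(X_0)$.} The sequence $d_k^*$ is monotonically non-increasing, because larger $k$ only enlarges the SOS cones appearing in~(\ref{dlmi}), and is bounded below by $d^* = p^* = \lambda(X_0)$ from Theorems~\ref{thm:1} and~\ref{thm:noGap} (every feasible pair of~(\ref{dlmi}) is feasible in~(\ref{vlp})). For the matching upper bound, pick $(v,w)$ feasible in~(\ref{vlp}) whose objective is within $\epsilon$ of $d^*$; approximate uniformly on $[0,T]\times X$ and $X$ by polynomials via Stone--Weierstrass, then perturb by small positive constants so that $-\mathcal{L}v$, $w - v(0,\cdot) - 1$, $v(T,\cdot)$ and $w$ become strictly positive on the respective semialgebraic sets. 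Putinar's Positivstellensatz --- applicable thanks to the Archimedean condition from Assumption~\ref{compact} --- then yields SOS certificates of the form required in~(\ref{dlmi}) for sufficiently large $k$, with objective arbitrarily close to that of $(v,w)$.

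\emph{Steps 2 and 3.} Any optimal $(v_k, w_k)$ of~(\ref{dlmi}) is automatically feasible in~(\ref{vlp}), because its SOS identities express $-\mathcal{L}v_k$, $w_k - v_k(0,\cdot) - 1$, $v_k(T,\cdot)$ and $w_k$ as nonnegative combinations of the constraint polynomials on the relevant sets. Lemma~\ref{lem:v0} combined with $w_k \ge 0$ on $X$ then gives $w_k \ge I_{X_0}$ pointwise on $X$, so
\[
0 \le \int_X \bigl(w_k - I_{X_0}\bigr)\,d\lambda = d_k^* - \lambda(X_0) \to 0,
\]
which is $L^1$ convergence from above. Since $I_{X_0} \le \bar w_k \le w_k$, the same limit carries over to $\bar w_k$ by squeezing. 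Moreover, $\bar w_k$ is monotonically non-increasing and bounded below by $I_{X_0}$, so $\bar w_\infty(x) := \lim_k \bar w_k(x)$ exists for every $x \in X$; the $L^1$ limit together with nonnegativity of $\bar w_k - I_{X_0}$ force $\bar w_\infty = I_{X_0}$ $\lambda$-almost everywhere, and Egorov's theorem (legitimate because $\lambda(X) < \infty$) upgrades this to almost uniform convergence on $X$.

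The main obstacle is Step~1. The perturbation of the nearly optimal continuous pair $(v,w)$ has to be small enough to keep the objective within $O(\epsilon)$ of $d^*$, yet uniform enough to make all four nonnegativity expressions strictly positive so that Putinar's Positivstellensatz applies on each of $[0,T]\times X\times U$, $X$, $X_T$ and $X$. Once that polynomial-SOS approximation is secured, the final convergence statement follows immediately from the bounds above and Egorov's theorem.
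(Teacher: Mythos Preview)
Your proof is correct and follows essentially the same route as the paper: take a nearly optimal $(v,w)$ for the infinite-dimensional dual, perturb to strict feasibility, approximate by polynomials (using the $C^1$ version of Stone--Weierstrass so that $\mathcal{L}v$ is also uniformly approximated), and invoke Putinar's Positivstellensatz to land in~(\ref{dlmi}); then the pointwise bound $w_k \ge I_{X_0}$ from Lemma~\ref{lem:v0} turns $d_k^* \to \lambda(X_0)$ into $L^1$ convergence. Your argument for almost uniform convergence of $\bar w_k$ (monotone pointwise limit identified via $L^1$, then Egorov) is a clean variant of the paper's subsequence-plus-squeeze argument; both are valid and equally short.
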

\begin{proof} From Lemma~\ref{lem:v0} and Theorem~\ref{thm:2}, for every $\epsilon > 0$ there exists a $(v,w)\in C^1([0,T]\times X)\times C(X)$ feasible in~(\ref{vlp}) such that $w \ge I_{X_0}$ and $\int_X (w - I_{X_0})\,d\lambda < \epsilon$. Set 
\begin{align*}
	\tilde{v}(t,x) &:= v(t,x) - \epsilon t + (T+1)\epsilon,\\
	\tilde{w}(x) &:= w(x) + (T+3)\epsilon.
\end{align*}
Since $v$ is feasible in (\ref{vlp}), we have $\mathcal{L}\tilde{v} = \mathcal{L}v - \epsilon$, and $\tilde{v}(T,x) = v(T,x) + \epsilon$. Since also $\tilde{w}(x) -\tilde{v}(0,x)\ge 1+2\epsilon$, it follows that $(\tilde{v},\tilde{w})$ is \emph{strictly} feasible in~(\ref{vlp}) with a margin at least $\epsilon$. Since $[0,T]\times X$ and $X$ are compact, there exist\footnote{This follows from an extension of the Stone-Weierstrass theorem that allows for a simultaneous uniform approximation of a function and its derivatives by a polynomial on a compact set; see, e.g.,~\cite{hirsch}. } polynomials $\hat{v}$ and $\hat{w}$ of a sufficiently high degree such that $\sup_{[0,T]\times X}\limits|\tilde{v} - \hat{v}|< \epsilon$, $\sup_{[0,T]\times X\times U}\limits |\mathcal{L}\tilde{v} - \mathcal{L}\hat{v}| < \epsilon$ and $\sup_X |\tilde{w}-\hat{w}| < \epsilon$. The pair of polynomials $(\hat{v},\hat{w})$ is therefore \emph{strictly} feasible in~(\ref{vlp}) and as a result, under Assumption~\ref{compact}, feasible in~(\ref{dlmi}) for a sufficiently large relaxation order $k$ (this follows from the classical Positivstellensatz by Putinar; see, e.g., \cite{lasserre} or \cite{putinar}), and moreover $\hat{w}\ge w$. Consequently, $\int_X| \tilde{w} - \hat{w}|\,d\lambda \le \epsilon\lambda(X)$, and so $\int_X( \hat{w} - w)\,d\lambda \le \epsilon\lambda(X)(T+4)$. Therefore
\[\int_X  ( \hat{w} - I_{X_0})\,d\lambda < \epsilon K,\quad \hat{w}\ge I_{X_0},\]
where $K:=[1 + (T+4)\lambda(X)] < \infty$ is a constant. This proves the first statement since $\epsilon$ was arbitrary.

The second statement immediately follows since given a sequence $w_k \to I_{X_0}$ in $L_1$ norm, there exists a subsequence $w_{k_i}$ that converges almost uniformly to $I_{X_0}$ by \cite[Theorems 2.5.2 and 2.5.3]{ash} and clearly $\bar{w}_k(x) \le \min\{w_{k_i}(x)\: : \: k_i \le k\}$.
\end{proof}

The following Corollary follows immediately from Theorem~\ref{thm:dualConvFun}. 
\begin{corollary}\label{cor:pdconv}
The sequence of infima of LMI problems~(\ref{dlmi}) converges monotonically from above to the supremum of the infinite-dimensional LP problem~(\ref{vlp}), i.e., $d^*\le d_{k+1}^* \le d_k^*$ and $\lim_{k\to\infty} d_k^* = d^*$. Similarly, the sequence of maxima of LMI problems (\ref{plmi}) converges monotonically from above to the maximum of the infinite-dimensional LP problem (\ref{rlp}), i.e., $p^* \le p^*_{k+1}\le p_k^*$ and $\lim_{k\to \infty} p^*_k = p^*$.
\end{corollary}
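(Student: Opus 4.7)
The plan is to combine three ingredients for each sequence: (i) a one-sided bound by feasibility embedding, (ii) monotonicity by using low-degree feasible points in higher-degree problems, and (iii) convergence coming directly from Theorem~\ref{thm:dualConvFun} together with the no-duality-gap statements of Theorems~\ref{thm:noGap} and~\ref{lem:noGapRelax}. The work is mostly book-keeping, since Theorem~\ref{thm:dualConvFun} has already delivered the hard analytical content.

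For the dual side, any pair $(v,w)$ feasible in the LMI~(\ref{dlmi}) is a polynomial pair whose sum-of-squares certificates force the pointwise inequalities of~(\ref{vlp}) on $X$, $U$, $X_T$ to hold (i.e., the polynomial identities in~(\ref{dlmi}) are exactly Putinar-type certificates of the inequality constraints in~(\ref{vlp})). Hence every admissible pair for~(\ref{dlmi}) is admissible for~(\ref{vlp}), and the objective $w' l$ equals $\int_X w\,d\lambda$; thus $d^*_k \geq d^*$. Monotonicity $d^*_{k+1}\leq d^*_k$ follows by observing that any order-$k$ feasible triple (polynomial pair together with its SOS certificates) is still feasible at order $k+1$: the polynomials $v,w$ have degree $\leq 2k\leq 2(k+1)$ and the SOS multipliers can be kept unchanged, so the order-$(k+1)$ feasible set contains the order-$k$ one and its infimum is no larger.

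For convergence, let $w_k$ be the $w$-component of an optimal solution of~(\ref{dlmi}). Theorem~\ref{thm:dualConvFun} gives $w_k\to I_{X_0}$ in $L^1(\lambda)$, i.e.
\[
d^*_k \;=\; \int_X w_k\,d\lambda \;\longrightarrow\; \int_X I_{X_0}\,d\lambda \;=\; \lambda(X_0).
\]
But Theorems~\ref{thm:1} and~\ref{thm:noGap} identify $d^*=p^*=\lambda(X_0)$, so $d^*_k\downarrow d^*$, establishing the dual claim in full.

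For the primal side, the no-duality-gap Theorem~\ref{lem:noGapRelax} gives $p^*_k=d^*_k$, so the limit $p^*_k\to p^*$ and the inequality $p^*_k\geq p^*$ are immediate from the dual statement combined with $p^*=d^*$. Monotonicity $p^*_{k+1}\leq p^*_k$ can be argued directly on the primal: any truncated moment sequence feasible at order $k+1$ yields a feasible point at order $k$ by dropping the moments and the rows/columns of the moment and localizing matrices of index exceeding~$k$; positive semidefiniteness is preserved under principal submatrix extraction, the equality constraints $A_k(\cdot)=b_k$ are the degree-$\leq 2k$ subsystem of $A_{k+1}(\cdot)=b_{k+1}$, and the objective $(y_0)_0$ is unchanged. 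No step is a real obstacle; the only thing requiring care is to make sure that the SOS and moment feasibility claims above truly respect the degree restrictions in~(\ref{plmi}) and~(\ref{dlmi}), which is a direct bookkeeping check.
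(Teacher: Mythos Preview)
Your argument is correct and follows essentially the same route as the paper: feasibility nesting for monotonicity on both sides, and Theorem~\ref{thm:dualConvFun} for the convergence $d_k^*\to d^*$. The only minor difference is on the primal side: you invoke the strong duality $p_k^*=d_k^*$ from Theorem~\ref{lem:noGapRelax} to read off the primal convergence directly, whereas the paper uses only weak SDP duality $d_k^*\ge p_k^*$ together with the separate inequality $p_k^*\ge p^*$ (from embedding feasible measures as truncated moment sequences) and then squeezes; both are valid and amount to the same bookkeeping.
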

\begin{proof} Monotone convergence of the dual optima $d_k^*$ follows immediately from Theorem~\ref{thm:dualConvFun} and from the fact that the higher the relaxation order $k$, the looser the constraint set of the minimization problem~(\ref{dlmi}). To prove convergence of the primal maxima observe that from weak SDP duality we have $d_k^* \ge p_k^*$ and from Theorems~\ref{thm:dualConvFun} and~\ref{thm:noGap} it follows that $d_k^* \to d^* = p^*$. In addition, clearly $p_k^* \ge p^*$ and $p_{k+1}^* \le p_k^*$ since the higher the relaxation order $k$, the tighter the constraint set of the maximization problem~(\ref{plmi}). Therefore $p_k^*\to p^*$ monotonically from above. \end{proof}

Theorem~\ref{thm:dualConvFun} establishes a functional convergence of $w_k$ to $I_{X_0}$ and Corollary~\ref{cor:pdconv} a convergence of the primal and dual optima $p_k^*$ and $d_k^*$ to the volume of the ROA $\lambda(X_0)=p^*=d^*$. Finally, the following theorem establishes a set-wise convergence of the sets~(\ref{eq:X0k}) and~(\ref{eq:barX0k}) to the ROA $X_0$.


\begin{theorem}\label{thm:convSet}
Let $(v_k,w_k) \in {\mathbb R}_{2k}[t,x]\times {\mathbb R}_{2k}[x]$ denote a solution to the dual LMI problem (\ref{dlmi}). Then the sets $X_{0k}$ and $\bar{X}_{0k}$ defined in~(\ref{eq:X0k}) and (\ref{eq:barX0k}) converge to the ROA $X_0$ from the outside such that
$X_{0k}\supset \bar{X}_{0k}\supset X_0$ and
\[
 \lim_{k\to\infty}\lambda(X_{0k}\setminus X_0) = 0 \:\:\:\:\text{and}\:\:\:\:
 \lim_{k\to\infty}\lambda(\bar{X}_{0k}\setminus X_0) = 0.
\]
Moreover the convergence of $\bar{X}_{0k}$ is monotonous, i.e., $\bar{X}_{0i} \subset \bar{X}_{0j}$ whenever $i \ge j$.
\end{theorem}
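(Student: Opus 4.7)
The plan is to establish the three set-theoretic claims (inclusions, monotonicity, and convergence) by direct manipulation of the feasibility constraints in the dual LMI~(\ref{dlmi}), then couple them with the $L^1$ convergence $w_k \to I_{X_0}$ and $\bar{w}_k \to I_{X_0}$ proven in Theorem~\ref{thm:dualConvFun}.

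First I would establish the inclusions $X_0 \subset \bar{X}_{0k} \subset X_{0k}$. Since any optimal solution $(v_k,w_k)$ of~(\ref{dlmi}) is feasible in the infinite-dimensional dual~(\ref{vlp}), Lemma~\ref{lem:v0} gives $v_k(0,\cdot)\ge 0$ on $X_0$, so $X_0\subset X_{0k}$. Applied to every $i\le k$, this also gives $\bar v_k(0,\cdot)=\min_{i\le k}v_i(0,\cdot)\ge 0$ on $X_0$, hence $X_0\subset \bar X_{0k}$. The inclusion $\bar X_{0k}\subset X_{0k}$ is immediate because $\bar v_k(0,x)\ge 0$ forces $v_k(0,x)\ge \bar v_k(0,x)\ge 0$. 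Monotonicity $\bar X_{0i}\subset\bar X_{0j}$ for $i\ge j$ likewise follows because $\min_{l\le i}v_l(0,x)\ge 0$ implies $\min_{l\le j}v_l(0,x)\ge 0$.

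Next I would extract the crucial pointwise lower bound $w_k(x)\ge v_k(0,x)+1$ for all $x\in X$. This comes from the second equality constraint of~(\ref{dlmi}): on $X$, the right-hand side $p_0(x)+\sum_i {q_0}_i(x)g^X_i(x)$ is nonnegative since the $p_0$ and ${q_0}_i$ are SOS and $g^X_i\ge 0$ on $X$. Therefore on $X_{0k}$ we obtain $w_k(x)\ge 1$, and together with $w_k\ge 0$ on $X$ (the fourth LMI constraint) this yields $w_k\ge I_{X_{0k}}\ge I_{X_0}$ on $X$. Taking the componentwise minimum over $i\le k$ in the relation $w_i(x)\ge v_i(0,x)+1$ gives $\bar w_k(x)\ge \bar v_k(0,x)+1$, so by the same reasoning $\bar w_k\ge I_{\bar X_{0k}}\ge I_{X_0}$ on $X$.

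Finally I would conclude the convergence using Theorem~\ref{thm:dualConvFun}. Integrating the inequality $w_k-I_{X_0}\ge I_{X_{0k}}-I_{X_0}\ge 0$ against $\lambda$ yields
\[
\lambda(X_{0k}\setminus X_0)\;=\;\int_X (I_{X_{0k}}-I_{X_0})\,d\lambda\;\le\;\int_X (w_k-I_{X_0})\,d\lambda \;\longrightarrow\; 0,
\]
since $w_k\to I_{X_0}$ in $L^1$. The identical inequality with $\bar w_k$ in place of $w_k$, together with the $L^1$ convergence $\bar w_k\to I_{X_0}$, gives $\lambda(\bar X_{0k}\setminus X_0)\to 0$. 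None of the steps is really an obstacle, since all the heavy analytic lifting (functional $L^1$ convergence, Positivstellensatz, absence of duality gap) has already been done; the only subtle point is remembering that the SOS representations in~(\ref{dlmi}) certify the dual inequalities of~(\ref{vlp}) only on the semialgebraic sets $X$, $U$, $X_T$, which is exactly where we need them.
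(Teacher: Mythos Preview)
Your proof is correct and follows essentially the same route as the paper's: establish the inclusions and monotonicity via Lemma~\ref{lem:v0} and the definition of $\bar v_k$, use the second constraint of~(\ref{dlmi}) to get $w_k\ge I_{X_{0k}}\ge I_{X_0}$ on $X$, and then invoke the $L^1$ convergence of Theorem~\ref{thm:dualConvFun}. The only cosmetic difference is that the paper handles both $X_{0k}$ and $\bar X_{0k}$ with the single chain $w_k\ge I_{X_{0k}}\ge I_{\bar X_{0k}}\ge I_{X_0}$ rather than introducing $\bar w_k$ separately for the second sequence.
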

\begin{proof} The inclusion $X_{0k}\supset \bar{X}_{0k} \supset X_0$ follows from Lemma~\ref{lem:v0} since any solution to (\ref{dlmi}) is feasible in (\ref{vlp}) and since $\bar{X}_{0k}=\cap_{i=1}^k X_{0i}$. The latter fact also proves the monotonicity of the sequence $\bar{X}_{0k}$. Next, from Lemma~\ref{lem:v0} we have $w_k \ge I_{X_0}$ and therefore, since $w_k\ge v_k(0,\cdot)+1$ on $X$, we have $w_k\ge I_{X_{0k}} \ge I_{\bar{X}_{0k}} \ge I_{X_{0}}$ on $X$. In addition, from Theorem~\ref{thm:dualConvFun} we have $w_k \to I_{X_0}$ in $L^1$ norm on $X$; therefore 
\begin{align*}
\lambda(X_0) = \int_X I_{X_0}\,d\lambda &=  \lim_{k\to\infty} \int_X w_k\,d\lambda  \ge  \lim_{k\to\infty} \int_X I_{X_{0k}}\,d\lambda\\  & =  \lim_{k\to\infty}\lambda(X_{0k}) \ge \lim_{k\to\infty}\lambda(\bar{X}_{0k}).
\end{align*}
But since $X_0 \subset \bar{X}_{0k} \subset X_{0k} $ we must have $\lambda(X_0) \le \lambda(\bar{X}_{0k}) \le \lambda(X_{0k}) $ and the theorem follows.
\end{proof}

\section{Free final time}\label{sec:infiniteTime}
In this section we outline a  straightforward extension of our approach to the problem of reaching the target set $X_T$ at \emph{any} time before $T < \infty$ (and not necessarily staying in $X_T$ afterwards).

It turns out that the set of all initial states $x_0$  from which it is possible to reach $X_T$ at a time $t\leq T$ can be obtained as the support of an optimal solution $\mu_0^*$ to the problem
\begin{equation}\label{rlp_free}
\begin{array}{rcll}
 & \sup & \mu_0(X) \\
& \mathrm{s.t.} & \mu_T = \mu_0\otimes \delta_0 + {\mathcal L}'\mu \\
&& \mu\geq 0,\: \lambda\geq\mu_0\geq 0, \: \mu_T\geq 0\\
&& \mathrm{spt}\:\mu \subset [0,T]\times X\times U\\
&& \mathrm{spt}\:\mu_0 \subset X, \:\:\mathrm{spt}\:\mu_T \subset [0,T]\times X_T,
\end{array}
\end{equation}
where the supremum is over a vector of nonnegative
measures $(\mu,\mu_0,\mu_T) \in M([0,T]\times X\times U)\times M(X)\times M([0,T]\times X_T)$. Note that the only difference to problem~(\ref{rlp}) is in the support constraints of the final measure $\mu_T$.

The dual to this problem reads as
\begin{equation}\label{vvlp}
\begin{array}{rclll}
 & \inf & \displaystyle\int_{X} w(x)\, d\lambda(x) \\
& \mathrm{s.t.} & \mathcal{L}v(t,x,u) \leq 0, \:\: &\!\!\forall\, (t,x,u) \in [0,T]\times X\times U \\
&& w(x) \ge v(0,x) + 1, \:\: &\!\!\forall\, x \in X \\
&& v(t,x) \geq 0, \:\: &\!\!\forall\, (t,x) \in [0,T] \times X_T\\
&& w(x) \geq 0, \:\: &\!\!\forall\, x \in X.
\end{array}
\end{equation}
The only difference to problem~(\ref{vlp}) is in the third constraint which now requires that $v(t,x)$ is nonnegative on $X_T$ for \emph{all} $t\in[0,T]$.

All results from the previous sections hold with proofs being almost verbatim copies.

\section{Numerical examples}\label{sec:NumeEx}
In this section we present five examples of increasing complexity to illustrate our approach: a univariate uncontrolled cubic system, the Van der Pol oscillator, a double integrator, the Brockett integrator and an acrobot. For numerical implementation,  one can either use Gloptipoly~3~\cite{glopti} to formulate the primal problem on measures and then extract the dual solution provided by a primal-dual SDP solver or formulate directly the dual SOS problem using, e.g., YALMIP~\cite{yalmip} or SOSTOOLS~\cite{sostools}. As an SDP solver we used SeDuMi~\cite{sedumi} for the first three examples and MOSEK for the last two examples. For computational purposes the problem data should be scaled such that the constraint sets are contained in, e.g., unit boxes or unit balls; in particular the time interval $[0,T]$ should be scaled to $[0,1]$ by multiplying the vector field $f$ by $T$. Computational aspects are further discussed in the Conclusion.

Whenever the approximations $X_{0k}$ defined in~(\ref{eq:X0k}) are monotonous (which is not guaranteed) we report these approximations (since then they are equal to the monotonous version $\bar{X}_{0k}$ defined in~(\ref{eq:barX0k})); otherwise we report $\bar{X}_{0k}$.

\subsection{Univariate cubic dynamics}

Consider the system given by
\[\dot{x} = x(x-0.5)(x+0.5), \]
the constraint set $X = [-1,1]$, the final time $T = 100$ and the target set $X_T = [-0.01,0.01]$. The ROA can in this case be determined analytically as $X_0 = [-0.5,0.5]$. Polynomial approximations to the ROA for degrees $d\in\{4,8,16,32\}$ are shown in Figure~\ref{fig:1}. As expected the functional convergence of the polynomials to the discontinuous indicator function is rather slow; however, the set-wise convergence of the approximations $X_{0k}$ (where $k = d/2$) is very fast as shown in Table~\ref{tab:1}. Note that the volume error is not monotonically decreasing -- indeed what is guaranteed to decrease is the integral of the approximating polynomial $w(x)$, not the volume of $X_{0k}$. Taking the monotonically decreasing approximations $\bar{X}_{0k}$ defined in~(\ref{eq:barX0k}) would prevent the volume increase. Numerically, a better behavior is expected when using alternative polynomial bases (e.g., Chebyshev polynomials)
instead of the monomials; see the conclusion for a discussion.

\begin{table}[ht]
\centering
\caption{\rm Univariate cubic dynamics -- relative volume error of the outer approximation to the ROA $X_0 = [-0.5,0.5]$ as a function of the approximating polynomial degree.}\label{tab:1}\vspace{2mm}
\begin{tabular}{ccccc}
\toprule
degree & 4 & 8 & 16 & 32 \\\midrule
error & 31.60\,\% & 3.31\,\% & 0.92\,\% & 1.49\,\% \\
\bottomrule
\end{tabular}
\end{table}

\begin{figure*}[th]
\begin{picture}(140,135)

	\put(-20,10){\includegraphics[width=45mm]{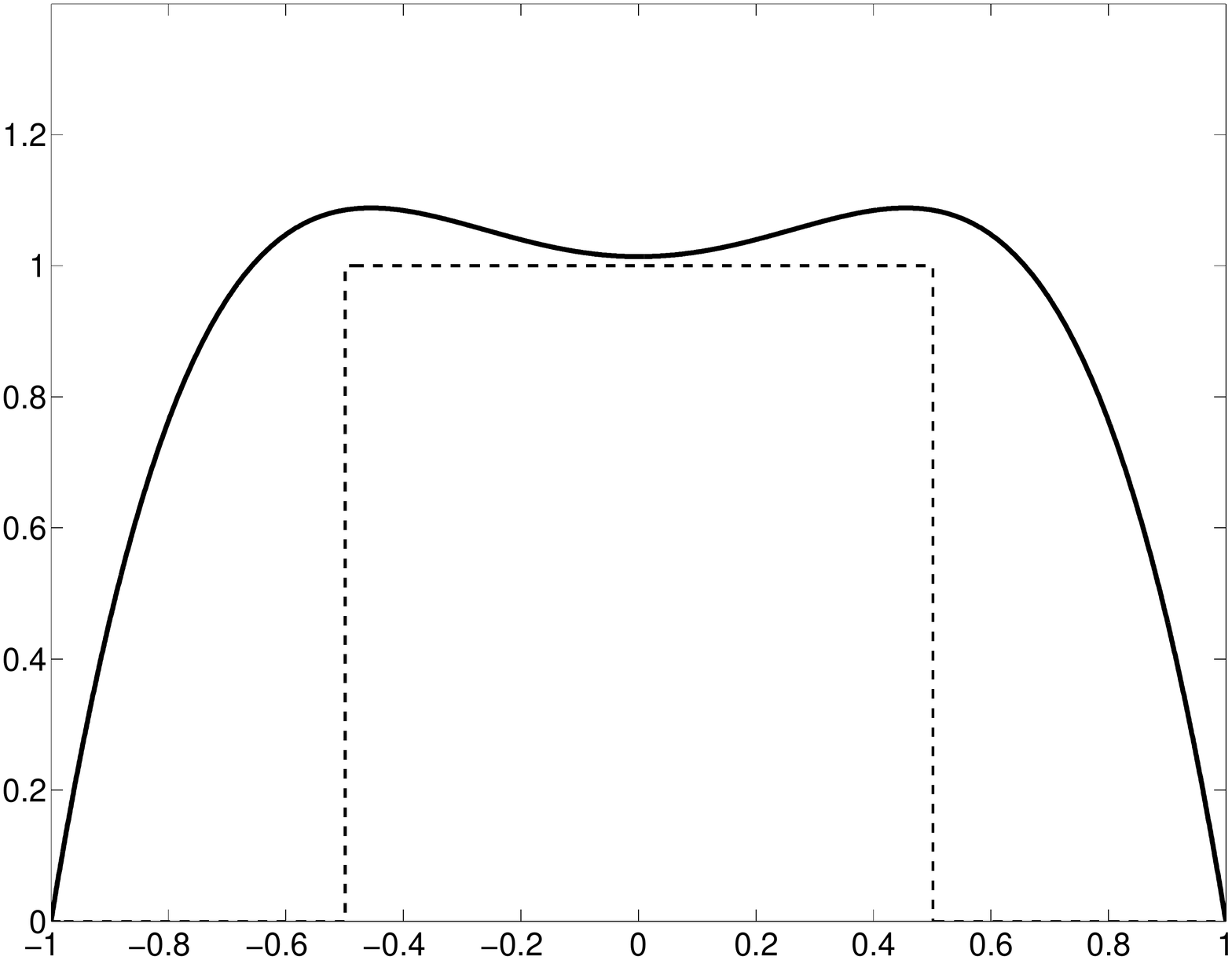}} 
	\put(110,10){\includegraphics[width=45mm]{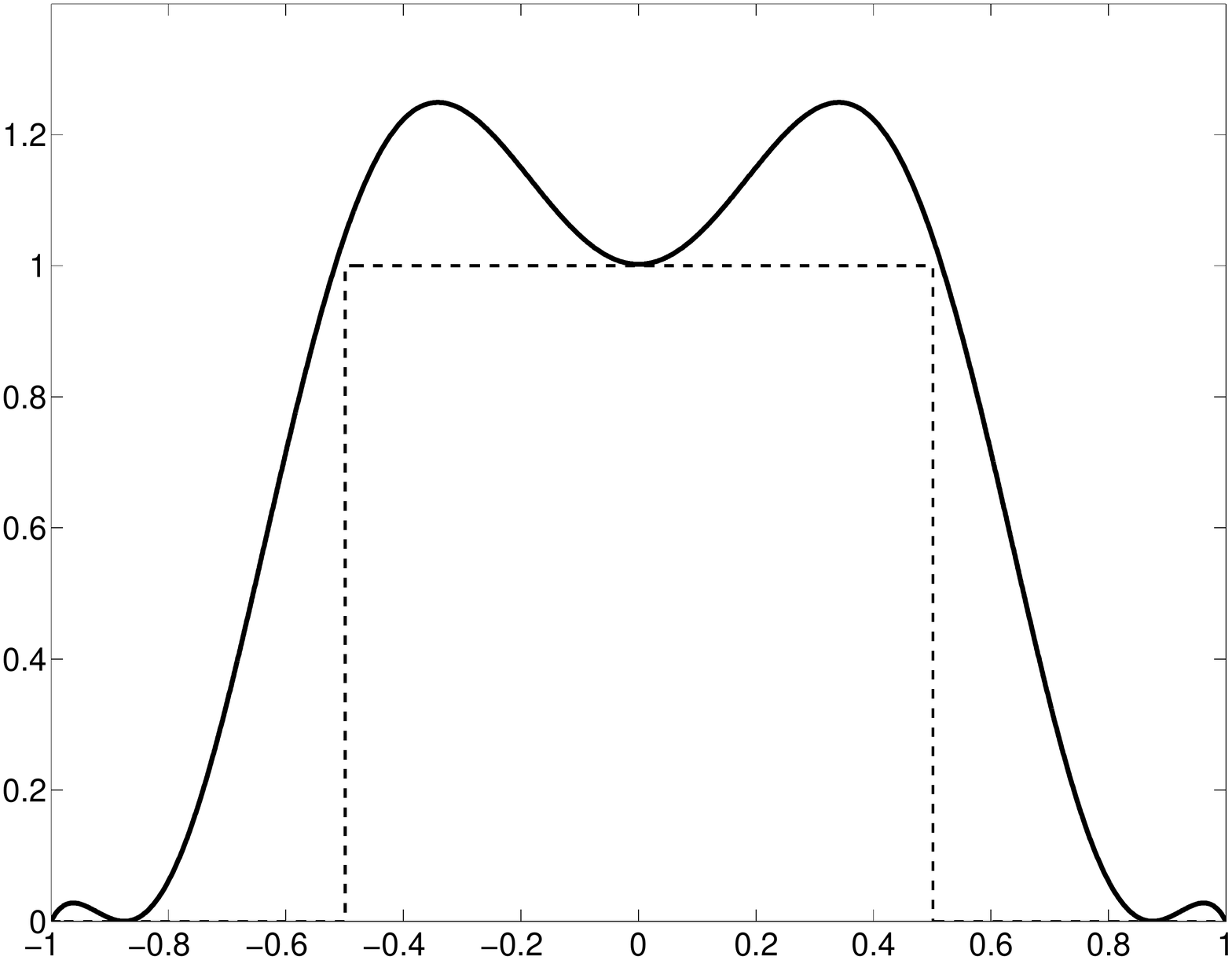}}
	\put(238,10){\includegraphics[width=45mm]{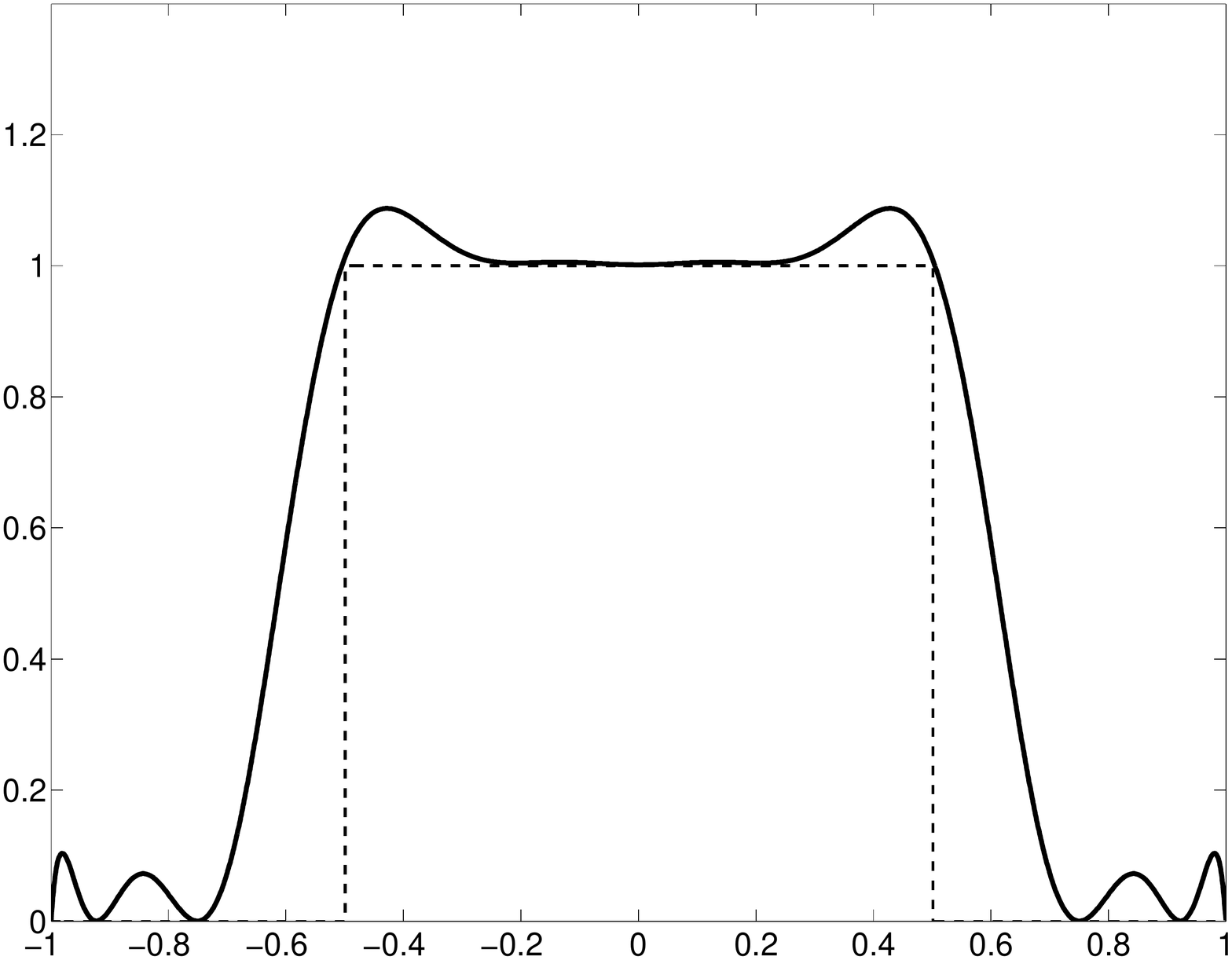}}
	\put(365,10){\includegraphics[width=45mm]{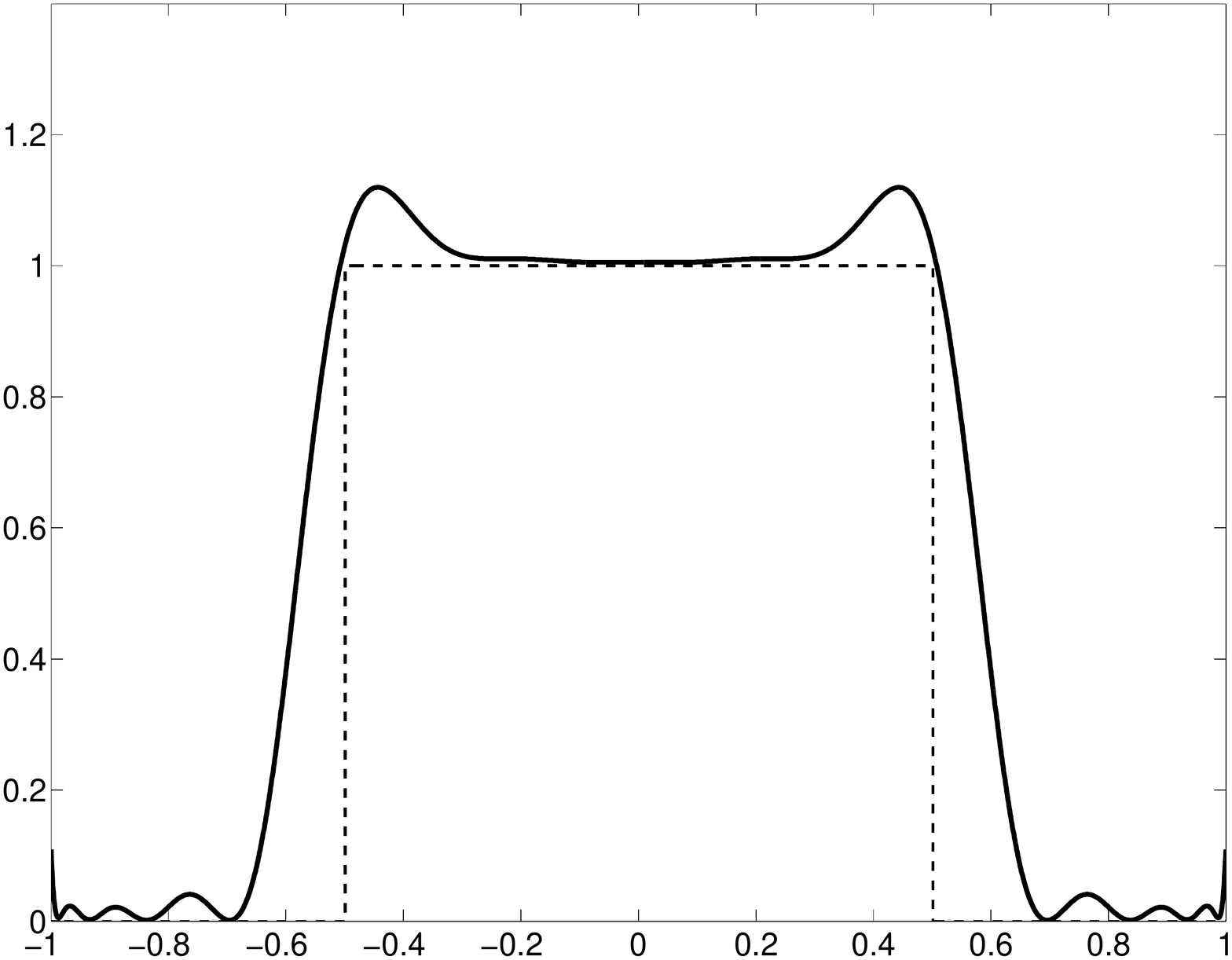}}

\put(43,0){\footnotesize $x$}
\put(173,0){\footnotesize $x$}
\put(301,0){\footnotesize $x$}
\put(428,0){\footnotesize $x$}

	\put(-5,96){\footnotesize $d = 4$}
	\put(125,96){\footnotesize $d = 8$}
	\put(253,96){\footnotesize $d = 16$}
	\put(380,96){\footnotesize $d = 32$}

\end{picture}
\caption{Univariate cubic dynamics -- polynomial approximations (solid line) to the ROA indicator function
$I_{X_0} =I_ {[-0.5,0.5]}$ (dashed line) for degrees $d\in\{4,8,16,32\}$.}
\label{fig:1}
\end{figure*}

\subsection{Van der Pol oscillator}

%
%

%
%

As a second example consider a scaled version of the uncontrolled reversed-time Van~der~Pol oscillator given by
\begin{align*}
	\dot{x}_1 &= -2x_2,\\
	\dot{x}_2 &= 0.8x_1 + 10(x_1^2-0.21)x_2.
\end{align*}
The system has one stable equilibrium at the origin with a bounded region of attraction \[X_0\subset X:= [-1.2,\,1.2]^2.\] In order to compute an outer approximation to this region we take $T=100$ and $X_T = \{x\: : \: \|x\|_2\le 0.01\}$. Plots of the ROA estimates $X_{0k}$ for $d = 2k \in \{10,12,14,16\}$ are shown in Figure~\ref{fig:2}. We observe a relatively-fast convergence of the super-level sets to the ROA -- this is confirmed by the relative volume error\footnote{The relative volume error was computed approximately by Monte Carlo integration.} summarized in Table~\ref{tab:2}. Figure~\ref{fig:vp3D} then shows the approximating polynomial itself for degree $d = 18$.
Here too, a better convergence is expected if instead of monomials, a more appropriate polynomial basis is used.

\begin{table}[!ht]
\centering
\caption{\rm Van der Pol oscillator -- relative error of the outer approximation to the ROA $X_0$ as a function of the approximating polynomial degree.} \label{tab:2} \vspace{2mm}
\begin{tabular}{ccccc}
\toprule
degree & 10 & 12 & 14 & 16 \\\midrule
error & 49.3\,\% & 19.7\,\% & 11.1\,\% & 5.7\,\% \\
\bottomrule
\end{tabular}
\end{table}

\begin{figure*}[ht]
	\begin{picture}(140,105)
	\put(-20,10){\includegraphics[width=45mm]{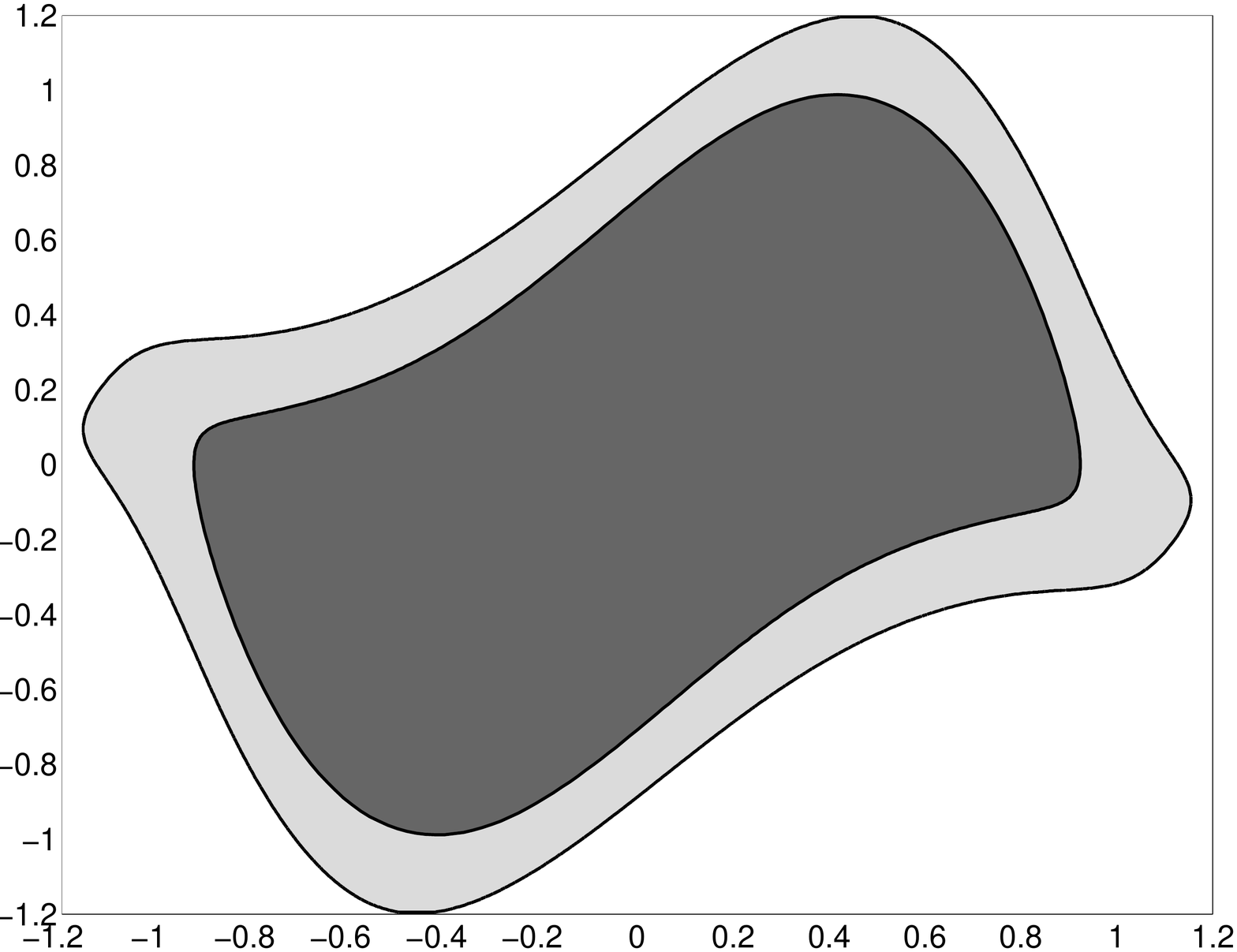}} 
	\put(110,10){\includegraphics[width=45mm]{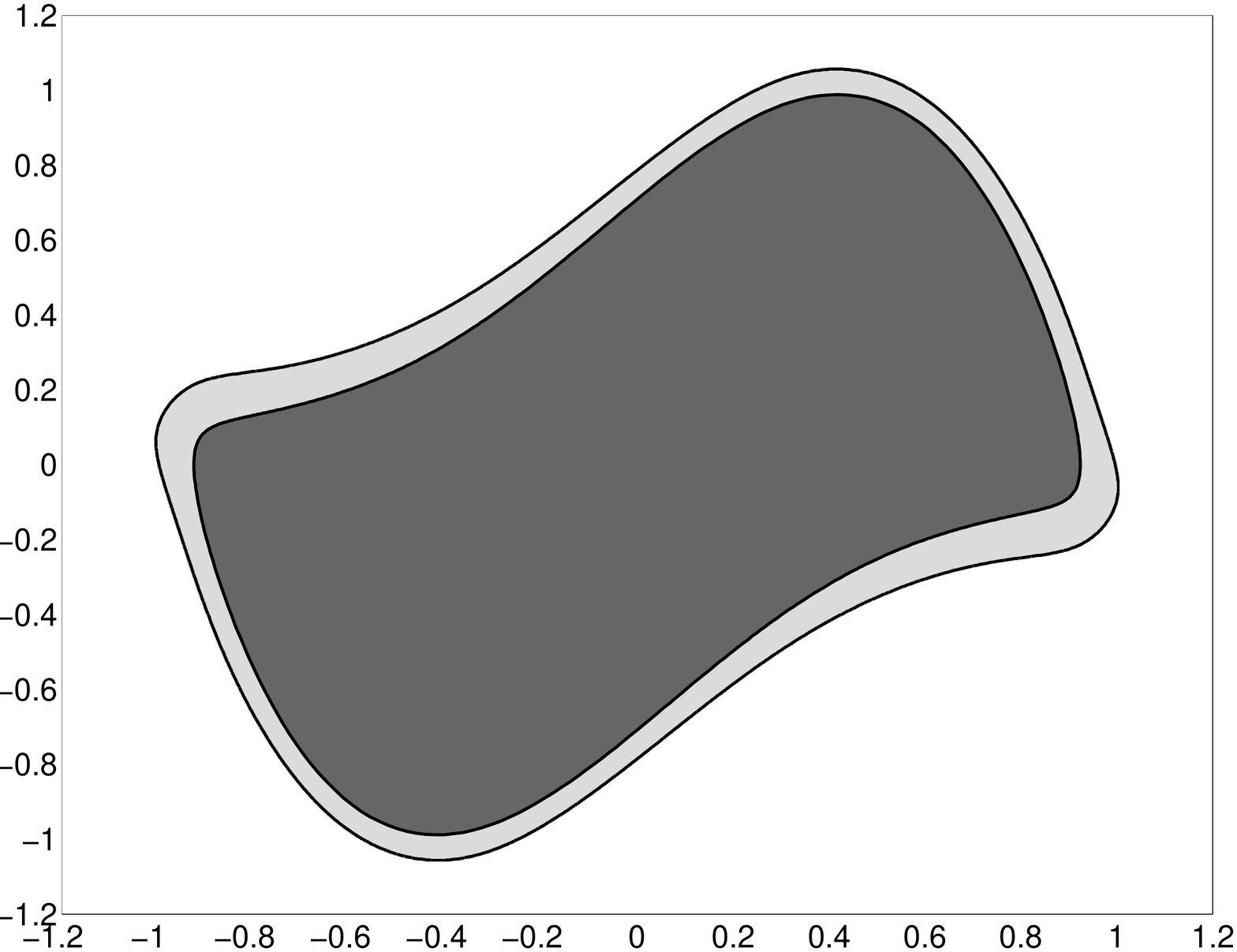}}
	\put(238,10){\includegraphics[width=45mm]{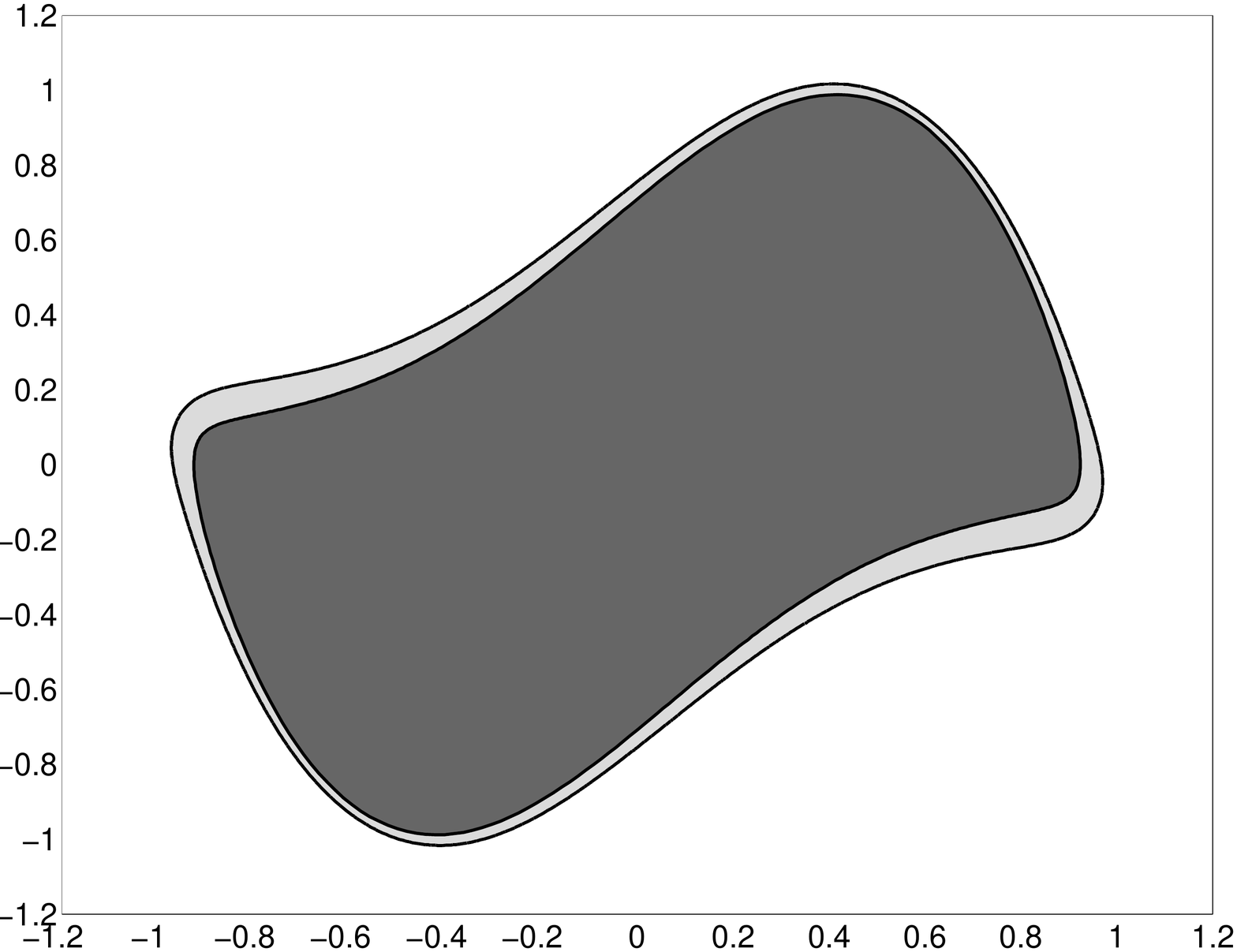}}
	\put(365,10){\includegraphics[width=45mm]{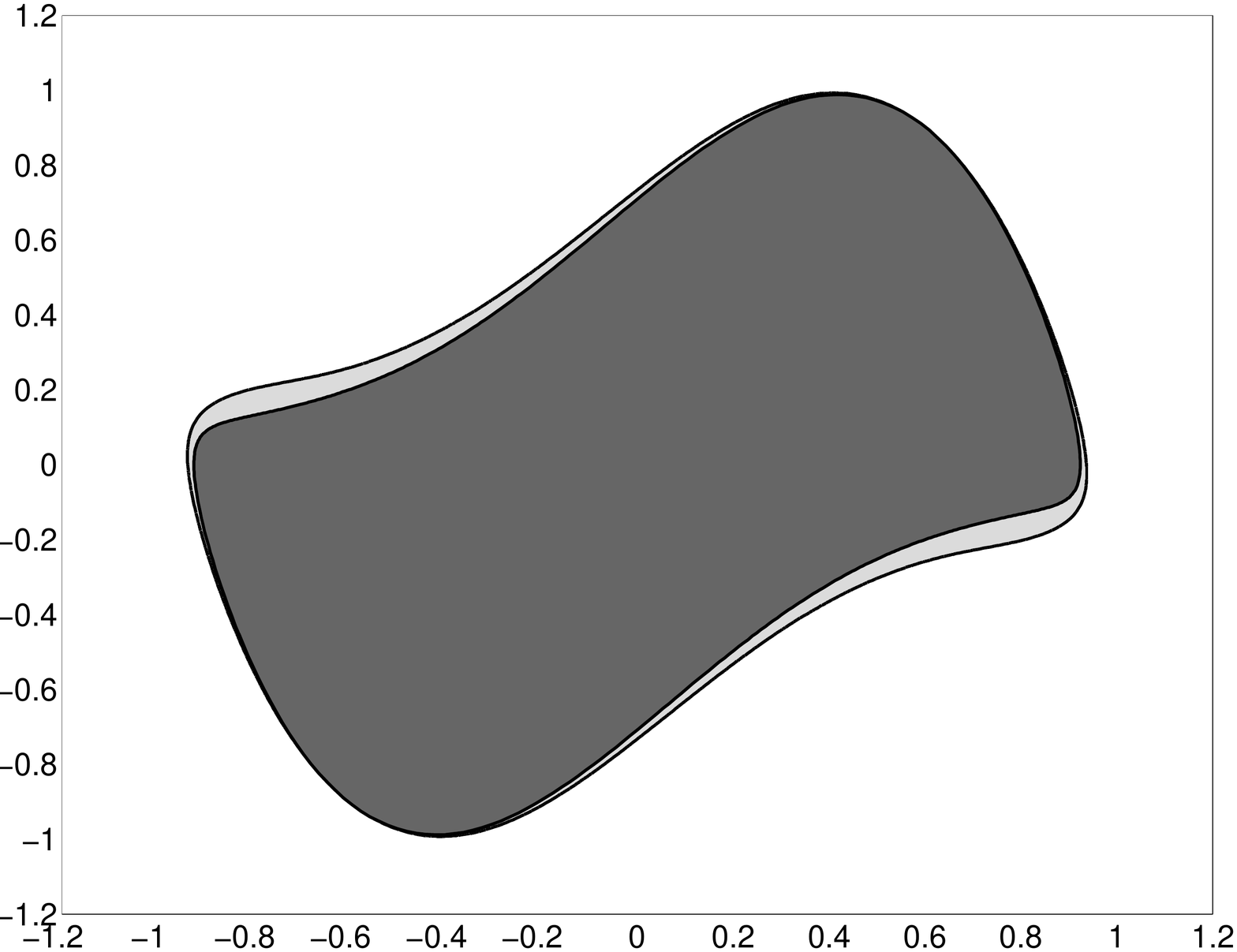}}

	\put(0,94){\footnotesize $d = 10$}
	\put(130,94){\footnotesize $d = 12$}
	\put(258,94){\footnotesize $d = 14$}
	\put(385,94){\footnotesize $d = 16$}
	\end{picture}
	\caption{Van der Pol oscillator -- semialgebraic outer approximations (light gray) to the ROA (dark gray) for degrees $d\in\{10,12,14,16\}$.}
	\label{fig:2}
\end{figure*}

\begin{figure}[h]
  \centering
\includegraphics[width=70mm]{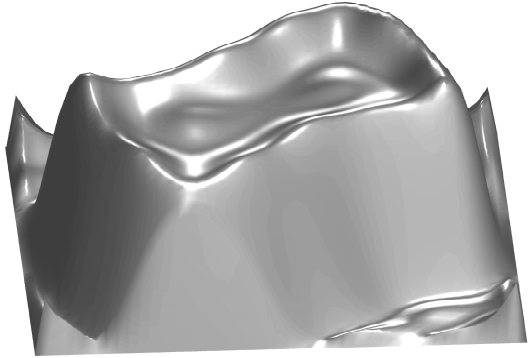}
    \caption{Van der Pol oscillator  -- a polynomial approximation of degree 18 of the ROA indicator function $I_{X_0}$.}
    \label{fig:vp3D}
\end{figure}

\subsection{Double integrator}

To demonstrate our approach in a controlled setting we first consider a double integrator
\begin{align*}
	\dot{x}_1 &= x_2 \\
	\dot{x}_2 & = u.	
\end{align*}
The goal is to find an approximation to the set of all initial states $X_0$ that can be steered to the origin at\footnote{In this case, the sets of all initial states that can be steered to the origin \emph{at} time $T$ and at any time \emph{before} $T$ are the same. Therefore we could also use the free-final-time approach of Section~\ref{sec:infiniteTime}.} the final time $T = 1$. Therefore we set $X_T = \{0\}$ and the constraint set such that $X_0 \subset X$, e.g., $X = [-0.7,0.7]\times [-1.2,1.2]$. The solution to this problem can be computed analytically as
\[X_0 = \{x \: : \: V(x) \le 1 \},\]
where
\[
 V(x) = \begin{cases}
 	x_2 + 2\sqrt{x_1+\frac{1}{2}x_2^2} & \mbox{if} \quad x_1 + \frac{1}{2}x_2|x_2| > 0, \\
	-x_2 + 2\sqrt{-x_1+\frac{1}{2}x_2^2} & \mbox{otherwise}.
 \end{cases}
\]
The ROA estimates $X_{0k}$ for $d=2k\in\{6,8,10,12\}$ are shown in  Figure~\ref{fig:3}; again we observe a relatively fast convergence of the super-level set approximations, which is confirmed by the relative volume errors in Table~\ref{tab:3}.

\begin{table}[ht]
\centering
\caption{\rm Double integrator -- relative error of the outer approximation to the ROA $X_0$ as a function of the approximating polynomial degree.}\label{tab:3}\vspace{2mm}
\begin{tabular}{ccccc}
\toprule
degree & 6 & 8 & 10 & 12 \\\midrule
error & 75.7\,\% & 32.6\,\% & 21.2\,\% & 16.0\,\% \\
\bottomrule
\end{tabular}
\end{table}

\begin{figure*}[!h]
\begin{picture}(140,135)
	\put(-20,10){\includegraphics[width=45mm]{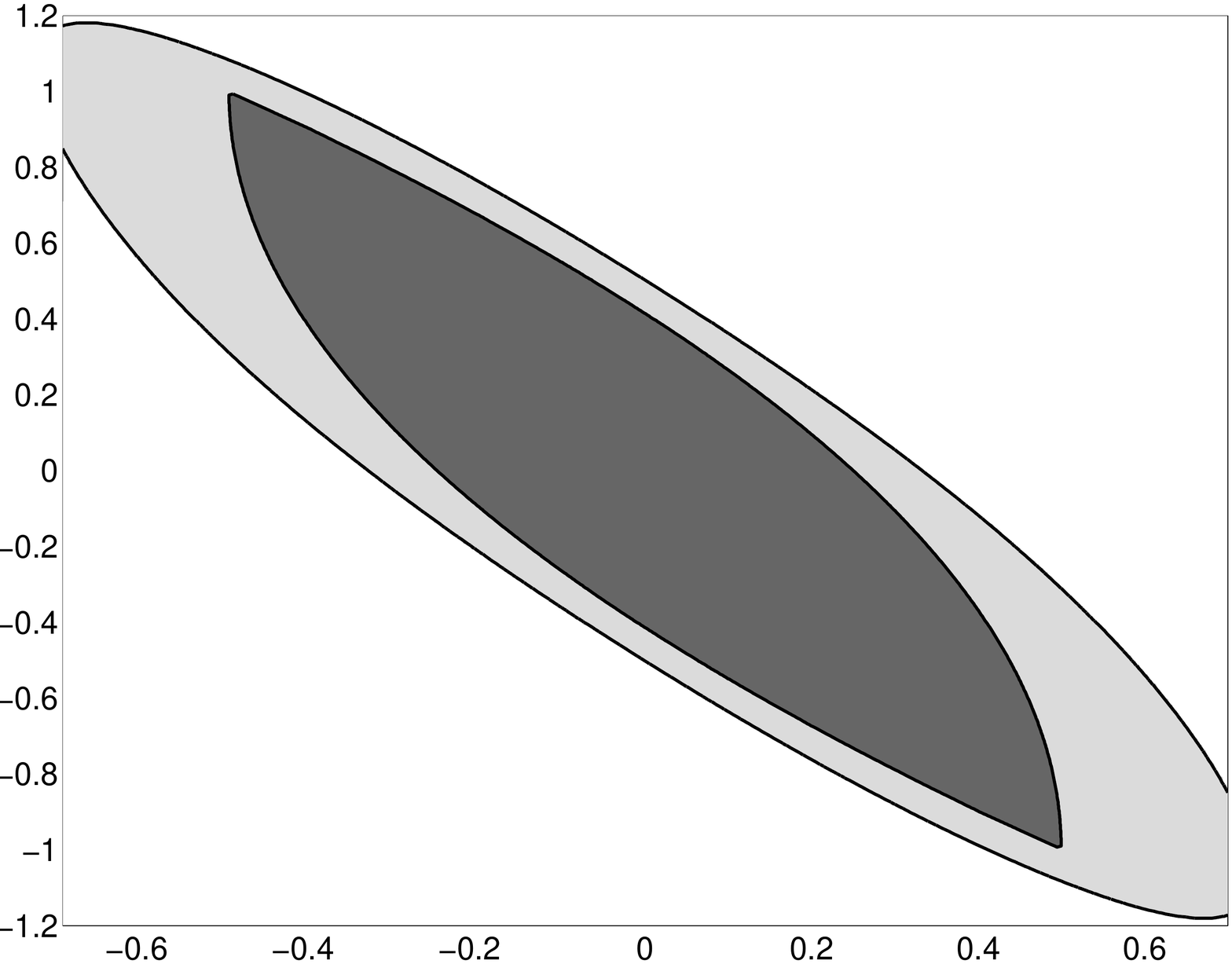}} 
	\put(110,10){\includegraphics[width=45mm]{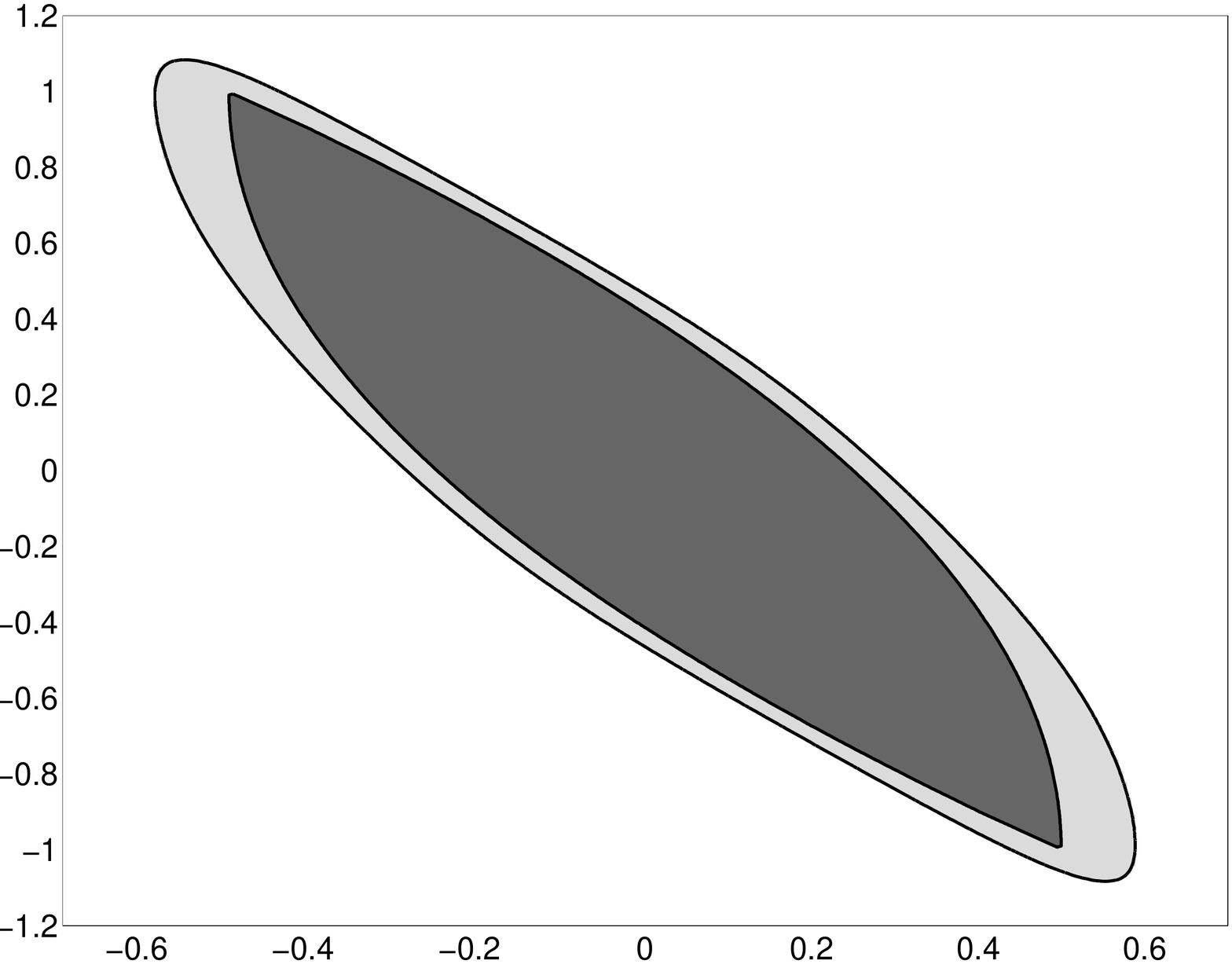}}
	\put(238,10){\includegraphics[width=45mm]{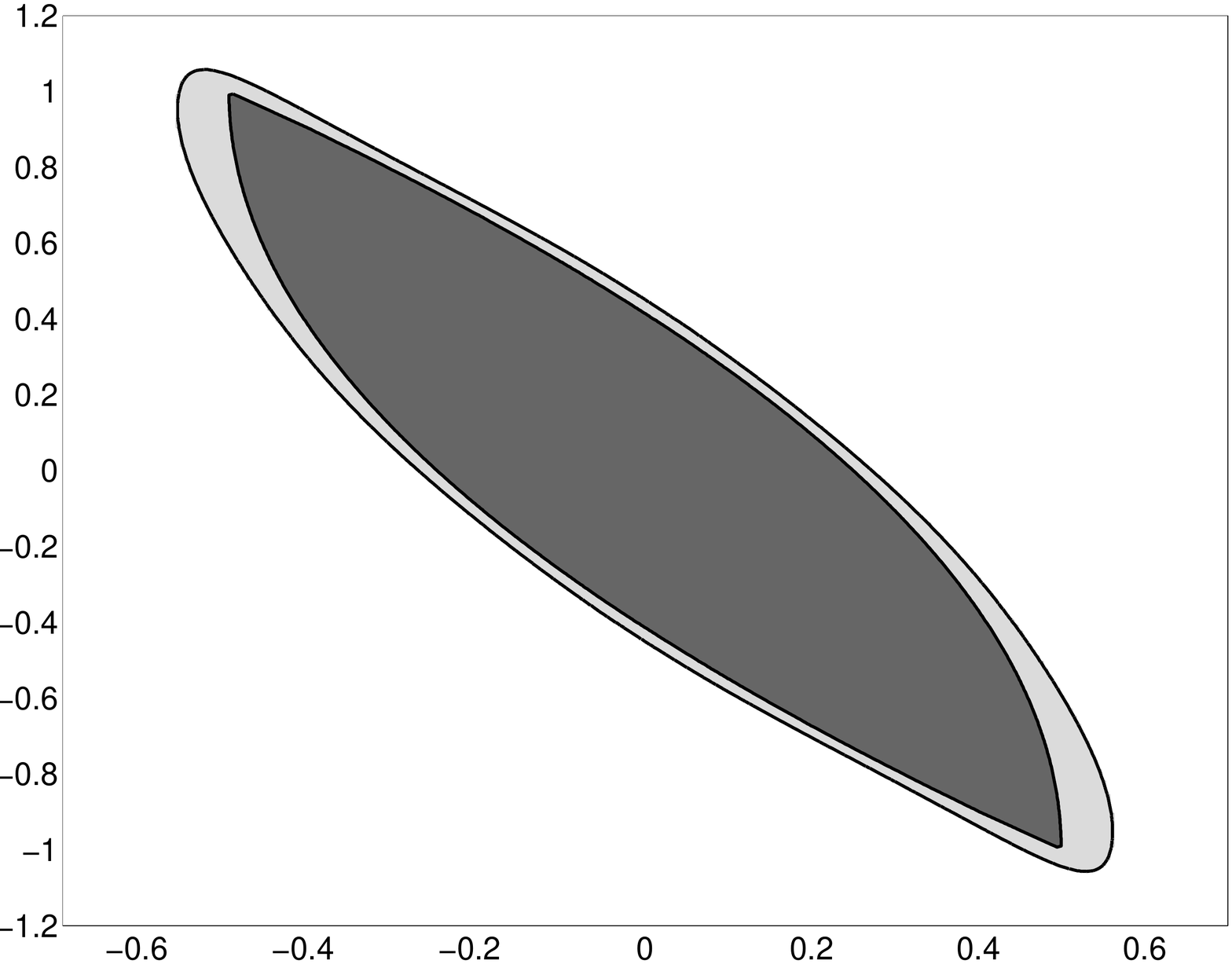}}
	\put(365,10){\includegraphics[width=45mm]{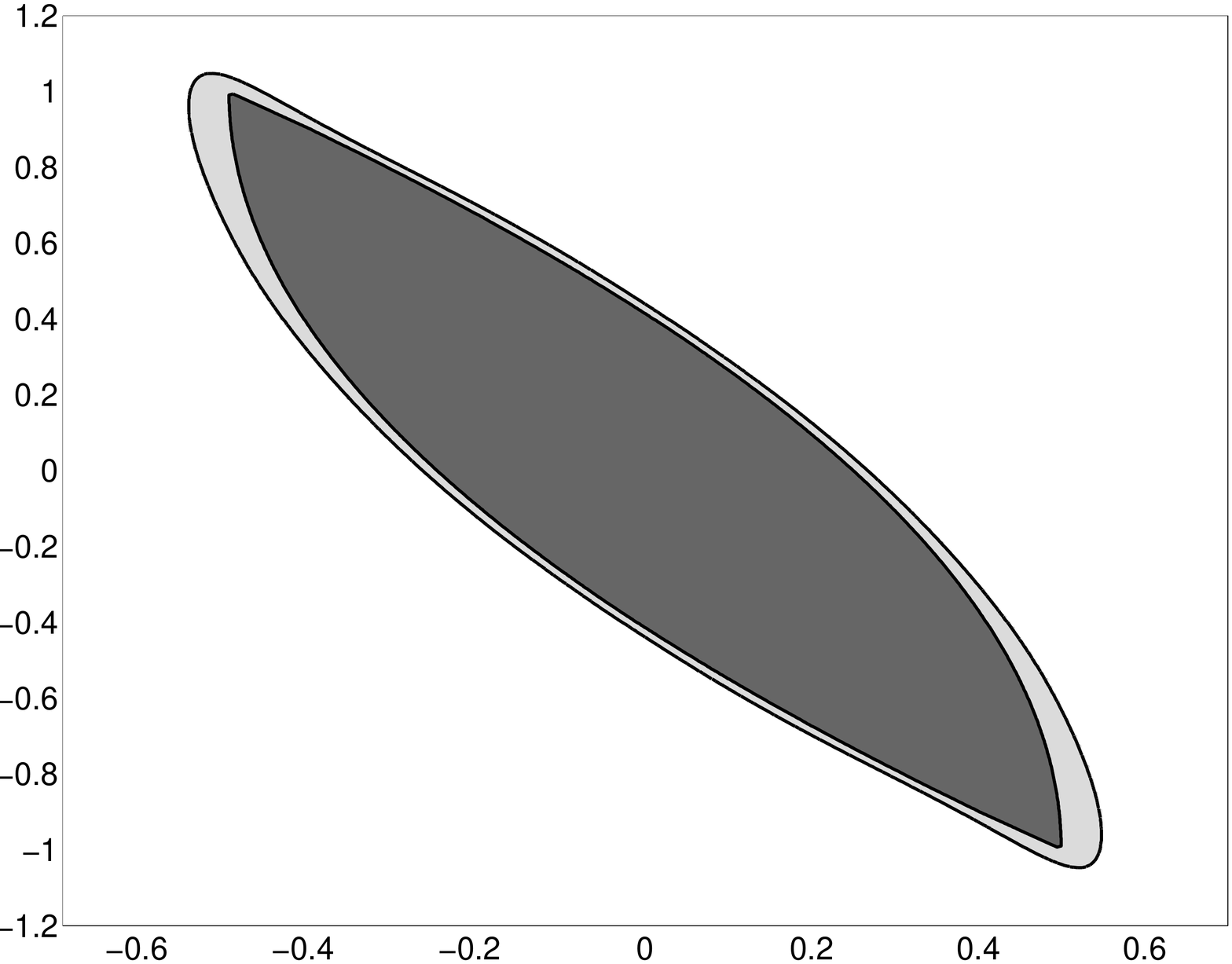}}
\put(70,94){\footnotesize $d = 6$}
\put(200,94){\footnotesize $d = 8$}
\put(328,94){\footnotesize $d = 10$}
\put(455,94){\footnotesize $d = 12$}

\end{picture}
\caption{Double integrator -- semialgebraic outer approximations (light gray) to the ROA (dark gray) for degrees $d\in\{6,8,10,12\}$.}
\label{fig:3}
\end{figure*}


\subsection{Brockett integrator}
Next, we consider the Brockett integrator
\begin{align*}
\dot{x}_1 &= u_1 \\
\dot{x}_2 &= u_2 \\
\dot{x}_3 & =  u_1x_2 - u_2x_1
\end{align*}
with the constraint sets $X = \{x\in \mathbb{R}^3 :  ||x||_\infty \le 1\}$ and $U = \{u\in \mathbb{R}^2 : ||u||_2 \le 1\}$, the target set $X_T = \{0\}$ and the final time $T = 1$. The ROA can be computed analytically (see~\cite{sicon}) as  $X_0 = \{x\in \mathbb{R}^3 \: : \: \mathcal{T}(x)\le 1\}$, where
\[
\mathcal{T}(x) = \frac{\theta\sqrt{x_1^2+x_2^2+2|x_3|}}{\sqrt{\theta+\sin^2\theta-\sin\theta\cos\theta}},
\]
and $\theta = \theta(x)$ is the unique solution in $[0,\pi)$ to
\[
\frac{\theta - \sin\theta\cos\theta}{\sin^2\theta}(x_1^2+x_2^2) = 2|x_3|.
\]
The ROA estimates $X_{0k}$ are not monotonous in this case and therefore in Figure~\ref{fig:Brockett} we rather show the monotonous estimates $\bar{X}_{0k}$ defined in~(\ref{eq:barX0k}) for degrees six $d = 2k \in \{6,10\}$. We observe fairly good tightness of the estimates.
 
\begin{figure*}[!h]
\begin{picture}(140,100)

\put(0,-45){\includegraphics[width=90mm]{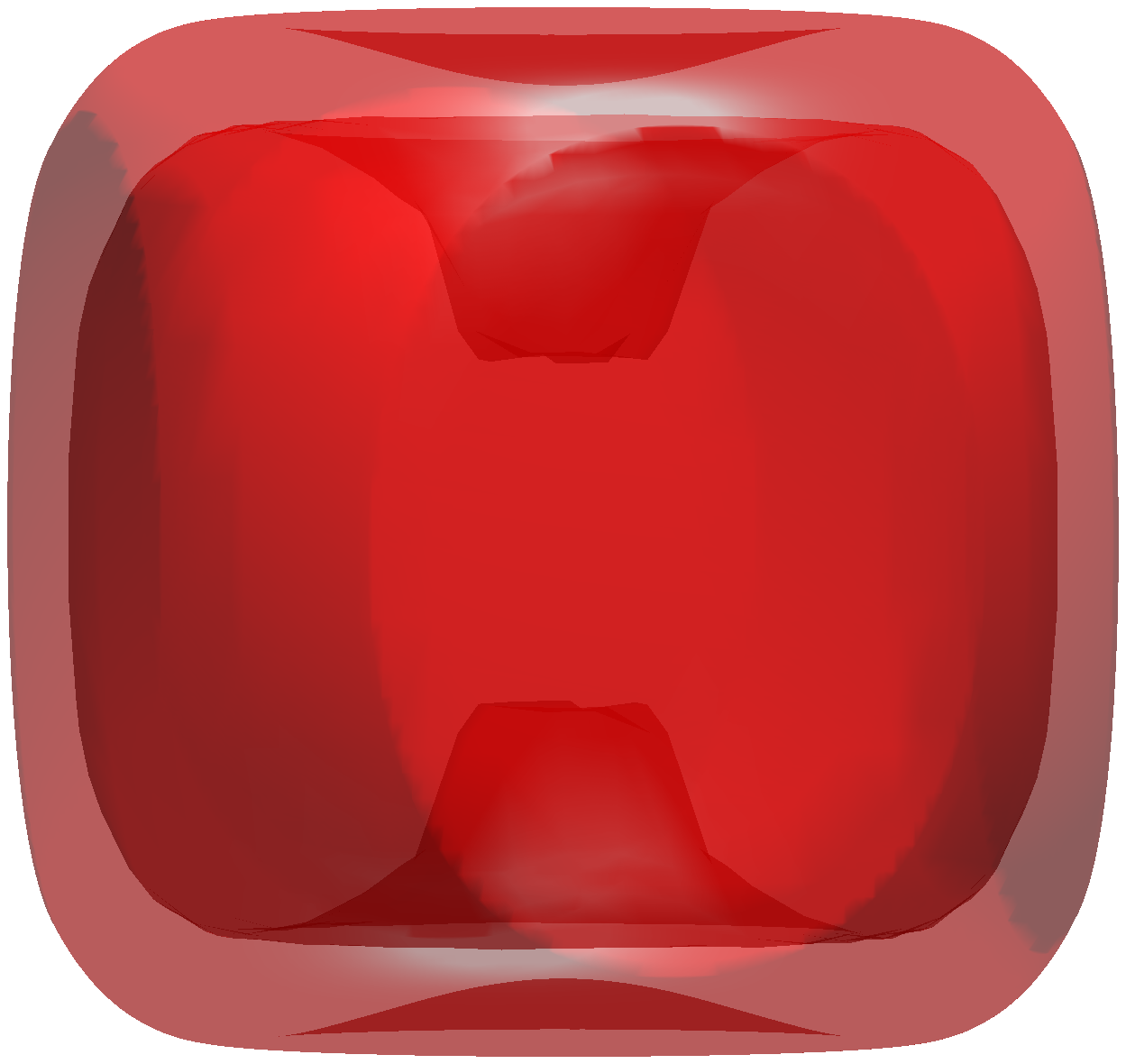}} 
\put(210,-45){\includegraphics[width=90mm]{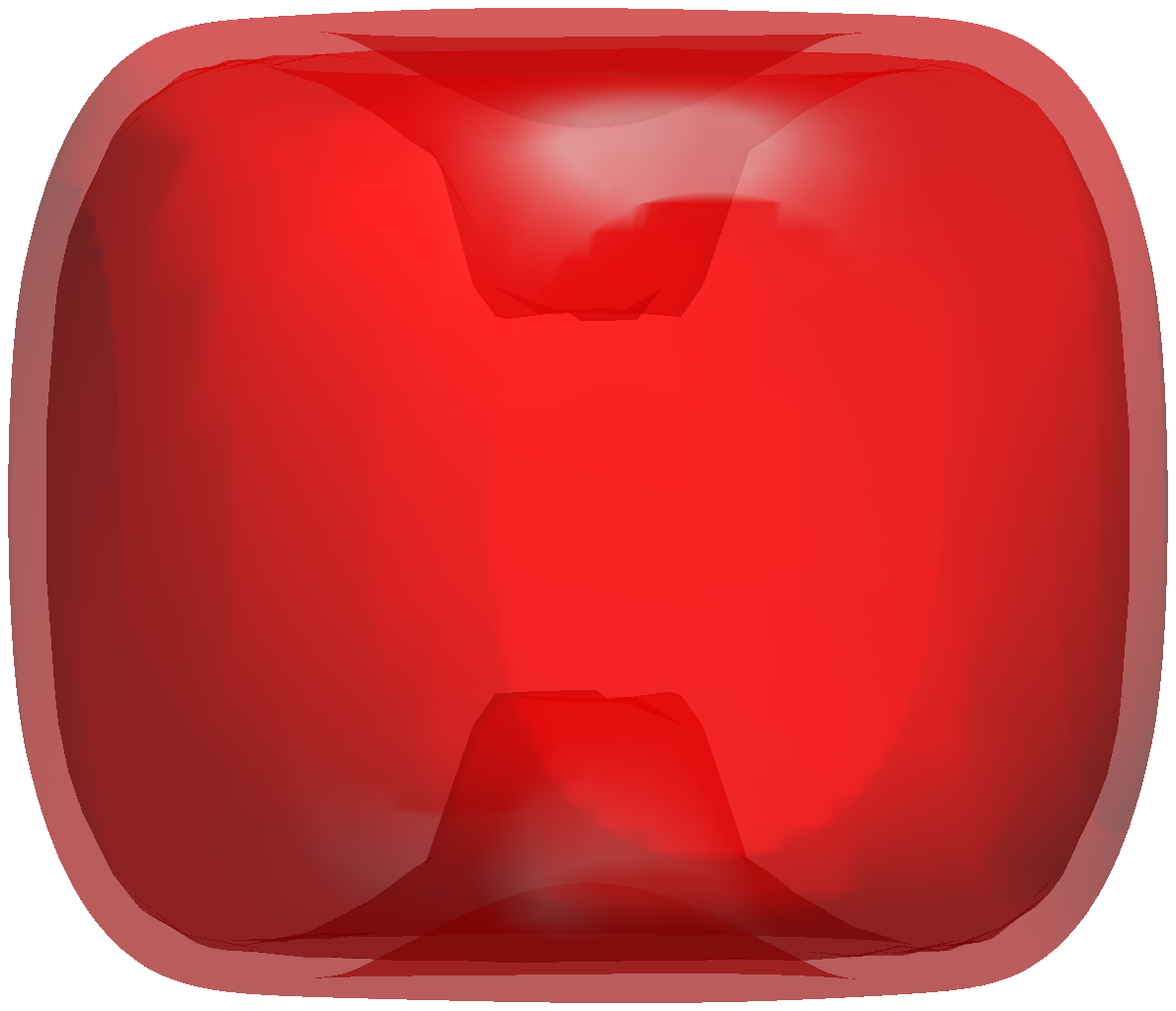}}

\put(123,120){\footnotesize $d = 6$}
\put(330,120){\footnotesize $d=10$}

\end{picture}
\caption{Brockett integrator -- semialgebraic outer approximations (light red, larger) to the ROA (dark red, smaller) for degrees $d\in\{6,10\}$.}
\label{fig:Brockett}
\end{figure*}

\subsection{Acrobot}
As our last example we consider the acrobot system adapted from~\cite{murrayAcrobot}, which is essentially a double pendulum with both joints actuated; see Figure~\ref{fig:acrobotSketch}. The system equations are given by
\[
\dot{x} = \begin{bmatrix}x_3\\x_4\\M(x)^{-1}N(x,u)\end{bmatrix}\in\mathbb{R}^4,
\]
where
\[
M(x) = \footnotesize \begin{bmatrix}			 
			 3+\cos (x_2)     &    1+\cos (x_2) \\
 			1+\cos (x_2)      &    1\\
	    \end{bmatrix}
	    \]
	    and
	    \begin{align*}
	    N(x,u) &= \\
	    &\hspace{-0.9cm}\footnotesize\begin{bmatrix}
                     g\sin(x_1 + x_2) - a_1 x_3 + a_2\sin (x_1) + x_4\sin(x_2)(2x_3 + x_4) + u_1\\
		-\sin(x_2)x_3^2 - a_1 x_4 + g\sin(x_1 + x_2) + u_2
	\end{bmatrix}
\end{align*}
with $g = 9.8$, $a_1 = 0.1$ and $a_2 = 19.6$. The first two states are the joint angles (in radians) and the second two the corresponding angular velocities (in radians per second). The two control inputs are the torques in the two joints. Here, rather than comparing our approximations with the true ROA (which is not easily available), we study how the size of the ROA approximations is influenced by the actuation of the first joint. We consider two cases: with both joints actuated and with only the middle joint actuated. In the first case the input constraint set is $U = [-10,10]\times [-10,10]$ and in the second case it is $U = \{0\}\times [-10,10]$. The state constraint set is for both cases $X = [-\pi/2, \pi/2] \times [-\pi, \pi] \times [-5, 5] \times [-5, 5]$. Since this system is not polynomial we take a third order Taylor expansion of the vector field around the origin. An exact treatment would be possible via a coordinate transformation leading to rational dynamics to which our approach can be readily extended; this extension is, however, not treated in this paper and therefore we use the simpler (and non-exact) approach with Taylor expansion. Figure~\ref{fig:acrobot} shows the approximations $X_{0k}$ of degree $d = 2k \in\{6,8\}$; as expected disabling actuation of the first joint leads to a smaller ROA approximation. For this largest example presented in the paper we also report computation times for two SDP solvers: the recently released MOSEK SDP solver and SeDuMi. Computation times\footnote{Table~\ref{tab:acrobotTime} reports pure solver times, excluding the Yalmip parsing and preprocessing overhead, using Apple iMac with 3.4 GHz Intel Core i7, 8 GB RAM, Mac OS X
10.8.3 and Matlab 2012a.} reported in Table~\ref{tab:acrobotTime} show that MOSEK outperforms SeDuMi in terms of speed by a large margin; this finding does not seem to be specific to this particular problem and holds for all ROA computation problems presented. Before solving, the problem data was scaled such that the constraint sets become unit boxes.

 \begin{figure}[h]
	\begin{picture}(80,31)
	\put(195,0){\includegraphics[width=35mm]{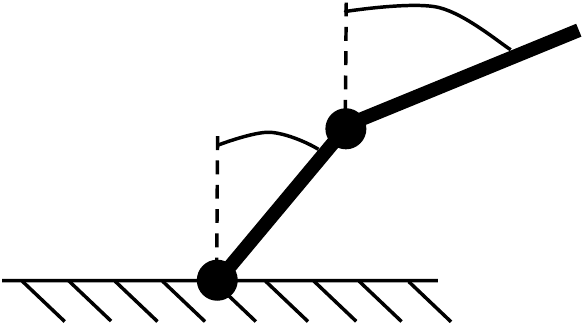}} 
	\put(235,26){\small $x_1$}
	\put(261,47){\small $x_2$}
	\put(239,10){\small $u_1$}
	\put(254,25){\small $u_2$}
	\end{picture}
	\caption{Acrobot -- sketch}
	\label{fig:acrobotSketch}
\end{figure}


\begin{figure*}[!t]
	\begin{picture}(140,220)
	\put(-35,-10){\includegraphics[width=100mm]{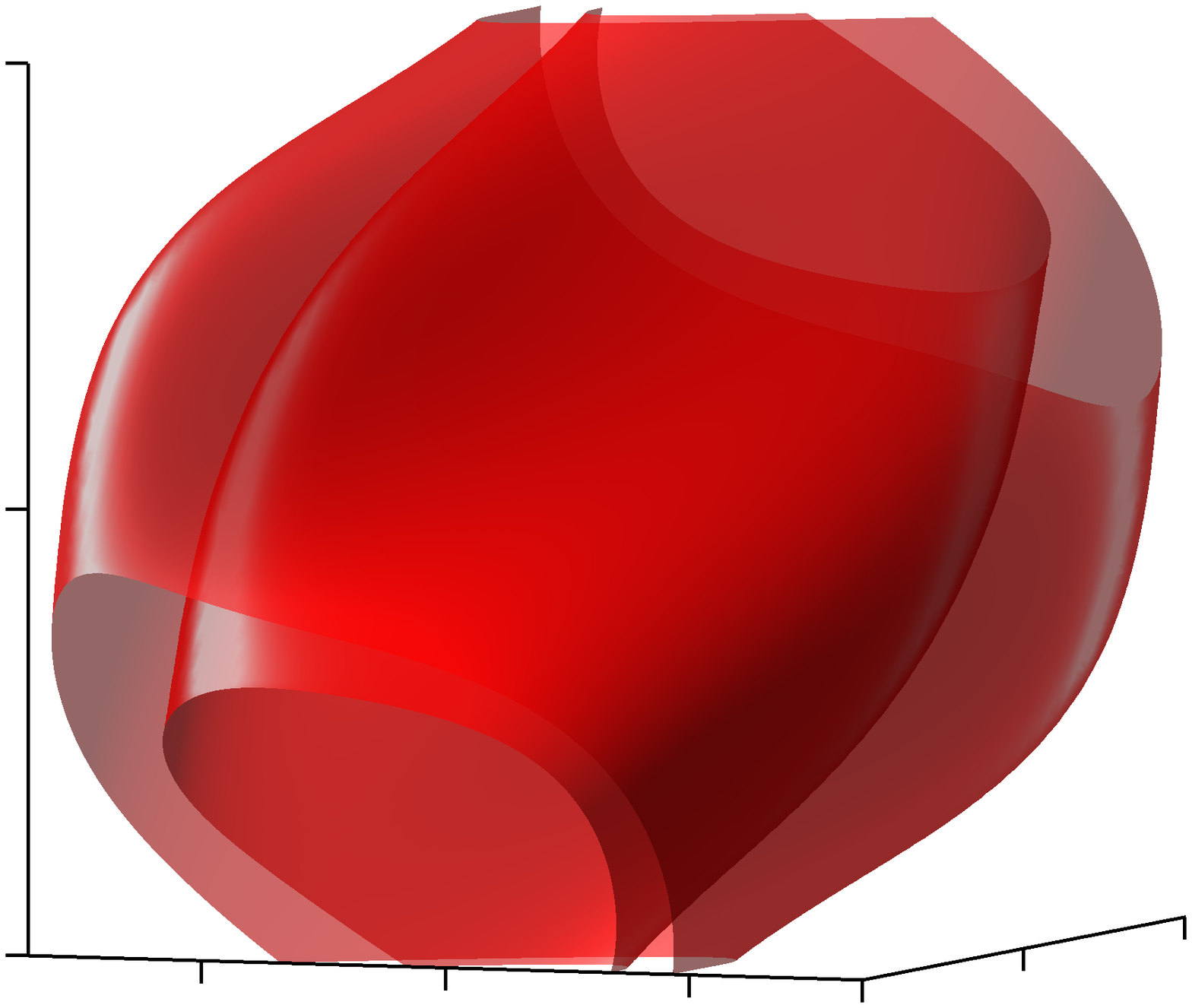}} 
	\put(220,-10){\includegraphics[width=100mm]{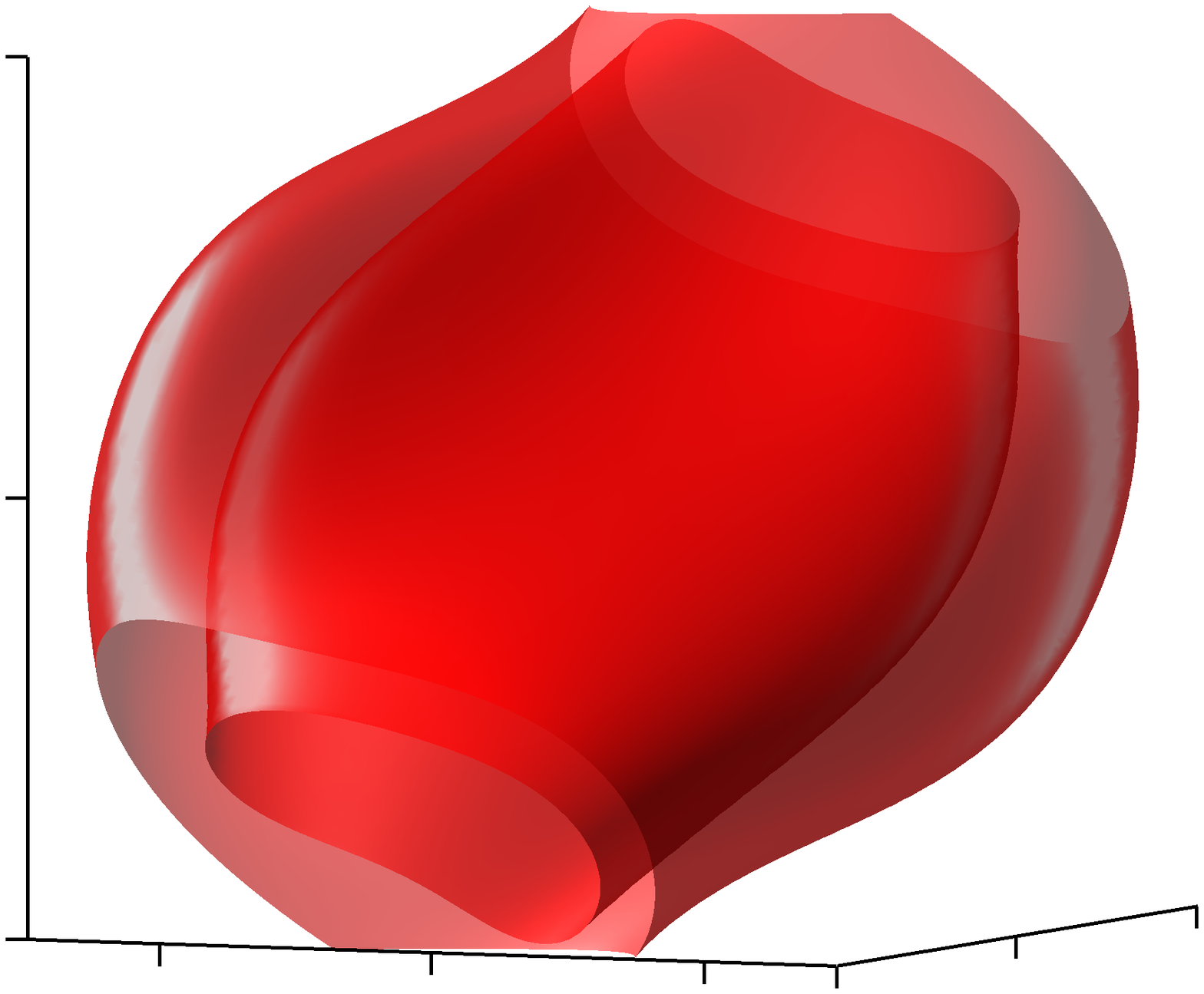}} 

	\put(185,20){\small $x_1$}
	\put(84,10){\small $x_2$}
	\put(-8,99){\small $x_3 $}
	
	\put(440,19){\small $x_1$}
	\put(337,10){\small $x_2$}
	\put(244,99){\small $x_3 $}
	
	\put(41,164){\small $d = 6$}
	\put(301,168){\small $d = 8$}
	
	\put(40,22){\footnotesize $-\pi/2$}
	\put(85,21.5){\footnotesize $0$}
	\put(115,21){\footnotesize $\pi/2$}
	
	\put(290,24){\footnotesize $-\pi/2$}
	\put(339,21.5){\footnotesize $0$}
	\put(373,21){\footnotesize $\pi/2$}
	
	\put(151,26){\footnotesize $-\pi/2$}
	\put(177,29){\footnotesize $0$}
	\put(188,33){\footnotesize $\pi/2$}
	
	\put(401,26){\footnotesize $-\pi/2$}
	\put(427,29){\footnotesize $0$}
	\put(439,32){\footnotesize $\pi/2$}
	
	\put(8,35){\footnotesize $-5$}
	\put(11,98){\footnotesize $0$}
	\put(11,162){\footnotesize $5$}
	
	\put(262,35){\footnotesize $-5$}
	\put(265,98){\footnotesize $0$}
	\put(265,162){\footnotesize $5$}

	\end{picture}
	\caption{Acrobot -- section for $x_4 = 0$ of the semialgebraic outer approximations of degree $d\in\{6,8\}$. Only the middle joint actuated -- darker, smaller; both joints actuated -- lighter, larger. The states displayed $x_1$, $x_2$ and $x_3$ are, respectively, the lower pendulum angle, the upper pendulum angle and the lower pendulum angular velocity. }
	\label{fig:acrobot}
\end{figure*}

\begin{table}[h]
\centering
\caption{\rm Acrobot -- comparison of computation time of MOSEK and SeDuMi for different degrees of the approximating polynomial. The ``--'' in the last cell signifies that SeDuMi could not solve the problem.}\label{tab:acrobotTime}\vspace{2mm}
\begin{tabular}{cccc}
\toprule
degree & 4 & 6 & 8 \\\midrule
MOSEK & 0.93\,s & 23.5\,s & 2029\,s \\\midrule
SeDuMi & 7.1\,s & 2775\,s & -- \\
\bottomrule
\end{tabular}
\end{table}

\section{Conclusion}\label{sec:Conclusion}

The main contributions of this paper can be summarized as follows:
\begin{itemize}
\item contrary to most of the existing systems control literature, we propose a convex
formulation for the problem of computing the controlled region of attraction;
\item our approach is constructive in the sense that we rely on standard
hierarchies of finite-dimensional LMI relaxations whose convergence can be
guaranteed theoretically and for which public-domain interfaces and
solvers are available;
\item we deal with polynomial dynamics and semialgebraic input and state constraints,
therefore covering a broad class of nonlinear control systems;
\item additional properties (e.g., convexity) of the approximations can be enforced by additional constraints on the approximating polynomial $v(0,\cdot)$ (e.g., Hessian being negative definite).
\item the approach is extremely simple to use -- the outer approximations are the outcome of a single semidefinite program with \emph{no additional data} required besides the problem description. 
\end{itemize}

The problem of computing the (forward) reachable set, i.e. the set of all states that can be reached from a given set of initial conditions under input and state constraints, can be addressed with the same techniques -- the problem can be formulated as ROA computation using a time-reversal argument. Similar ideas can also be used to characterize and compute outer approximations of the maximum controlled invariant set (both for discrete and continuous time); this is a work in progress.

The hierarchy of LMI relaxations described in this paper generates a sequence of nested outer approximations of the ROA, but it is also possible, using a similar approach, to compute valid
inner approximations. Results for uncontrolled systems will be reported elsewhere. The extension to the controlled case is far more involved (both theoretically and computationally) and is a subject of future research.

Furthermore, there is a straightforward extension to piecewise polynomial dynamics defined over a semialgebraic partition of the state and input spaces -- one measure is then defined for each region of the partition. Our approach should also allow for extensions to stochastic systems (either discrete-time controlled Markov processes or controlled SDEs) and/or uncertain systems.

Since it is based on (an extension of) Lasserre's hierarchy of LMI relaxations (originally proposed
for static polynomial optimization \cite{lasserre}), our approach scales
similarly as Lasserre's approach. Namely, the number of
moments (variables in the LMI relaxation) grows as $O(k^{n+m})$ when the problem dimension $n+m$ is kept constant and the relaxation order $k = d/2$ varies, and grows as $O((n+m)^k)$ when $k$ is kept constant and $n+m$ varies. Therefore, at present, the approach is limited to systems of moderate size (say, $n+m \le 6$) unless one is willing to compromise the accuracy of the approximations by taking a small relaxation order $k$. However, given the rapid progress of computing and optimization, the authors expect the approach to scale to larger dimensions in the future. One possible direction is sparsity exploitation; indeed, the recently released MOSEK SDP solver seems to have far superior performance to SeDuMi on our problem class, most likely due to more sophisticated sparsity exploitation. Another direction is parallelization; for instance, parallel interior point solvers (e.g., SDPARA~\cite{sdpara}) should allow the approach to scale to larger dimensions. Alternative algorithms to standard primal-dual interior-point methods, such as first-order methods, are also worth considering. This is currently investigated by the authors.


Numerical examples indicate that the choice of monomials as a dense basis for
the set of continuous functions on compact sets, while mathematically appropriate
(and notationally convenient), is not always satisfactory in terms of convergence
and quality of the approximations due to numerical ill-conditioning of this basis. However, this is not
peculiar to ROA computation problems -- a similar behavior was already observed
when computing the volume (and moments) of semialgebraic sets in~\cite{volume}.
To achieve better performance, we recommend the use of alternative polynomial bases
such as Chebyshev polynomials; see~\cite[Section~4]{henrionKybernetika} for more details.

\section*{Appendix~A}

In this Appendix we state and prove the correspondence between the Liouville PDE
on measures (\ref{eq:Liouville}) and the convexified differential inclusion (\ref{eq:incl2}). We will need the notion of a stochastic kernel. Let $Y$ and $Z$ be two Borel sets (in two Euclidean spaces of not necessarily the same dimension). The object $\nu(\cdot \!\mid\! \cdot)$ is called a stochastic kernel on $Y$ given $Z$ if, first, $\nu(\cdot \!\mid\! z)$ is a probability measure on $Y$ for every fixed $z\in Z$, and, second, if $\nu(A \!\mid\!\cdot)$ is a measurable function on $Z$ for every fixed $A\subset Y$. Let also $\bar{\mu}(t,x)$ denote the $(t,x)$-marginal of the occupation measure $\mu$ defined through~(\ref{eq:ocmeas}), that is,
\[
\bar{\mu}(A\times B) := \mu(A\times B\times U)\quad \forall\: A \subset [0,T],\: B\subset X.
\]

\begin{lemma}\label{lem:corr}
\looseness-1
Let $(\mu_0,\mu,\mu_T)$ be a triplet of measures satisfying the Liouville equation~(\ref{eq:Liouville}) such that $\mathrm{spt}\,\mu_0\subset X$, $\mathrm{spt}\,\mu\subset [0,T]\times X \times U$ and $\mathrm{spt}\,\mu_T \subset X_T$. Then there exists a family of absolutely continuous admissible trajectories of~(\ref{eq:incl2}) starting from $\mu_0$ (i.e., trajectories in $\bar{\mathcal{X}}(x_0)$) such that the occupation measure and the terminal measure generated by this family of trajectories are equal to $\bar{\mu}$ and $\mu_T$, respectively.
\end{lemma}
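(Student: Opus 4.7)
\textbf{Proof proposal for Lemma \ref{lem:corr}.}

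The plan is to use a disintegration of $\mu$ together with a version of Ambrosio's superposition principle for the continuity equation. First I would extract the ``time regularity'' of $\mu$. Testing Liouville's equation \eqref{eq:Liouville} against test functions of the form $v(t,x)=\phi(t)$ with $\phi\in C^1([0,T])$ and $\phi(0)=\phi(T)=0$ gives $\int_{[0,T]}\phi'(t)\,d\pi(t)=0$, where $\pi$ is the marginal of $\mu$ on $[0,T]$. Hence $\pi$ has zero distributional derivative on $(0,T)$, so $\pi=c\,dt$ for some $c\ge 0$ (and $c=\mu_0(X)=\mu_T(X_T)$ follows by also testing with $v(t,x)\equiv 1$ and with affine $\phi$). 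Consequently $\mu$ admits a disintegration
\[
\mu(dt,dx,du)=dt\otimes\sigma_t(dx)\otimes\omega(du\mid t,x),
\]
where $\sigma_t$ is a finite measure on $X$ for a.e.\ $t$, and $\omega(\cdot\mid t,x)$ is a stochastic kernel from $[0,T]\times X$ to $U$ (both existing by the standard disintegration theorem for Radon measures on Polish spaces).

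Next I would define the averaged vector field
\[
\tilde f(t,x):=\int_U f(t,x,u)\,\omega(du\mid t,x).
\]
Since $\omega(\cdot\mid t,x)$ is a probability measure supported in $U$, we have $\tilde f(t,x)\in\mathrm{conv}\,f(t,x,U)$ for every $(t,x)$, and $\tilde f$ is bounded and Borel-measurable (continuity of $f$ and compactness of $[0,T]\times X\times U$ plus measurability of $\omega$). Substituting the disintegration into \eqref{eq:Liouville} shows that for every $v\in C^1([0,T]\times X)$
\[
\int_X v(T,x)\,d\mu_T-\int_X v(0,x)\,d\mu_0=\int_0^T\!\!\int_X\bigl(\partial_t v+\nabla_x v\cdot \tilde f(t,x)\bigr)\sigma_t(dx)\,dt,
\]
i.e.\ the curve $t\mapsto\sigma_t$ satisfies the continuity equation $\partial_t\sigma_t+\mathrm{div}(\tilde f\sigma_t)=0$ in the distributional sense on $(0,T)$ with boundary values $\sigma_0=\mu_0$ and $\sigma_T=\mu_T$ (the latter two identifications obtained by choosing $v$ to localize near $t=0$ and $t=T$).

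Then I would apply the superposition principle (Ambrosio, see e.g.\ Theorem~8.2.1 in Ambrosio--Gigli--Savar\'e) to $\{\sigma_t\}$: normalizing by the total mass if needed, there exists a (probability) measure $\eta$ on $C([0,T];\mathbb{R}^n)$ concentrated on absolutely continuous curves $y(\cdot)$ which solve $\dot y(t)=\tilde f(t,y(t))$ for a.e.\ $t$, and such that $(e_t)_\#\eta=\sigma_t/\sigma_0(X)$ for every $t\in[0,T]$, where $e_t(y(\cdot))=y(t)$. The applicability of this superposition principle is guaranteed by the boundedness and Borel-measurability of $\tilde f$ plus the integrability $\int_0^T\!\!\int_X|\tilde f|\,d\sigma_t\,dt<\infty$, which holds trivially here. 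Rescaling by $\mu_0(X)$ and pushing forward through the evaluation maps, the family of trajectories encoded by $\eta$ is parametrized by initial distribution $\mu_0$ and has $(t,x)$-occupation measure equal to $\bar\mu=dt\otimes\sigma_t$ and terminal measure equal to $\mu_T$. Admissibility, i.e.\ $y(t)\in X$ for all $t$ and $y(T)\in X_T$, follows from the support constraints $\mathrm{spt}\,\mu\subset[0,T]\times X\times U$ and $\mathrm{spt}\,\mu_T\subset X_T$, together with continuity of the trajectories $y(\cdot)$. Finally, $\dot y(t)=\tilde f(t,y(t))\in\mathrm{conv}\,f(t,y(t),U)$ shows these are solutions of the convexified inclusion \eqref{eq:incl2}.

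The main obstacle is the invocation of the superposition principle for the merely Borel-measurable vector field $\tilde f$ with a time-varying family of (non-probability) measures that need not be smooth. This is precisely the content of Ambrosio's result, so the work is in verifying its hypotheses in our compact polynomial setting (boundedness of $\tilde f$ on $[0,T]\times X$, Borel measurability of $\omega$ coming from the disintegration theorem, and integrability along the flow). A secondary technical point is the passage from disintegration of $\mu$ in $u$ given $(t,x)$ to an honest measurable selector for the convexified inclusion; this is handled automatically because the relevant ``selector'' is the expectation $\tilde f$ rather than any pointwise element of $f(t,x,U)$.
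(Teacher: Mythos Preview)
Your proposal is correct and follows essentially the same route as the paper: disintegrate $\mu$ to obtain a stochastic kernel in $u$, define the averaged field $\tilde f(t,x)\in\mathrm{conv}\,f(t,x,U)$, show the $t$-marginal of $\mu$ is Lebesgue (scaled by the total mass), derive the continuity equation for the resulting curve $t\mapsto\sigma_t$, and invoke Ambrosio's superposition principle. The only notable difference is that the paper works out explicitly the selection of a narrowly continuous version of the disintegration $t\mapsto\mu_t$ (via a uniqueness argument for an auxiliary integral equation) before applying the superposition theorem, whereas you absorb this step into the cited result; this is a matter of presentation rather than strategy.
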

\begin{proof}
Since the occupation measure $\mu$ is defined on a Euclidean space which is Polish and therefore Souslin, it can be, in view of \cite[Corollary 10.4.13]{bogachev}, disintegrated as \[d\mu(t,x,u) = d\nu(u\!\mid\! t,x)d\bar{\mu}(t,x),\] where $d\nu(u\!\mid\! t,x)$ is a stochastic kernel on $U$ given $[0,T]\times X$. Then we can rewrite equation~(\ref{test}) as
\begin{align}\label{eq:testApp}
\nonumber & \int_{X_T} v(T,\cdot)\,d\mu_T -  \int_X v(0,\cdot)\,d\mu_0\\ \nonumber &= \int_{[0,T]\times X}\int_U \frac{\partial v}{\partial t} + \mathrm{grad}\,v\cdot f(t,x,u)\,d\nu(u\!\mid\! t,x)\,d\bar{\mu}(t,x)\\ \nonumber
&=\int_{[0,T]\times X}  \frac{\partial v}{\partial t} + \mathrm{grad}\,v\cdot\Big[\int_U f(t,x,u)\,d\nu(u\!\mid\! t,x)\Big]\,d\bar{\mu}(t,x)\\
&=  \int_{[0,T]\times X}  \frac{\partial v}{\partial t} + \mathrm{grad}\,v\cdot\bar{f}(t,x)\,d\bar{\mu}(t,x),
\end{align}
where
\[\bar{f}(t,x) := \int_U f(t,x,u)\,d\nu(u\!\mid\! t,x) \in \mathrm{conv}\, f(t,x,U).\]
Therefore we will study the trajectories of the differential equation
\begin{equation}\label{eq:oderel}
	\dot{x}(t) = \bar{f}(t,x(t)).
\end{equation}
In the remainder of the proof we show that the measures $\mu_T$ and $\bar{\mu}$ are generated by a family of absolutely continuous trajectories of this differential equation (which is clearly a subset of trajectories of the convexified inclusion~(\ref{eq:incl2})) starting from $\mu_0$. Note that the vector field $\bar{f}$ is only known to be measurable\footnote{Measurability of $\bar{f}(t,x)$ follows by first observing that for $f(t,x,u) =  I_{A\times B\times C}(t,x,u)$ we have $\bar{f}(t,x) = I_{A}(t)I_B(x)\nu(C\!\mid\! t,x)$, which is a product of measurable functions, and then by approximating an arbitrary measurable $f(t,x,u)$ by simple functions (i.e., sums of indicator functions). This is a standard measure theoretic argument; details are omitted for brevity.}, so this equation may not admit a unique solution.

Observe that the $t$-marginal of $\mu$ (and hence of $\bar{\mu}$) is equal to the Lebesgue measure restricted to $[0,T]$ scaled by $\rho:=\mu_0(X)$ (=$\mu_T(X)$). Indeed, plugging $v(t,x) = t^k$, $k\in\mathbb{N}$, in (\ref{test}), we obtain $\mu_T(X) = \int t^k d\mu_0 + \int k t^{k-1}\,d\mu$; taking $k = 0$ gives $\mu_T(X) = \mu_0(X)$ and $k\ge 1$ gives $\int t^{k-1}\,d\mu = \mu_T(X) T^{k}/k$, which is nothing but the Lebesgue moments on $[0,T]$ scaled by $\mu_T(X) = \mu_0(X)$. Therefore, using~\cite[Theorem~6.4]{bogachev}, we can disintegrate $\bar{\mu}$ as
\begin{equation}\label{eq:mudisint}
	d\bar{\mu}(t,x) = d\mu_t(x)dt,
\end{equation}
where $d\mu_t(x)$ is a stochastic kernel on $X$ given $t$ scaled by $\rho$ and $dt$ is the standard Lebesgue measure on $[0,T]$. The kernel $\mu_t$ can be thought of as the distribution\footnote{It will become clear from the following discussion that for $t=0$ and $t=T$ this kernel (or a version thereof) coincides with $\mu_0$ and $\mu_T$, respectively; hence there is no ambiguity in notation. Note also that the kernel $\mu_t$, $t\in [0,T]$, is defined uniquely up to a subset of $[0,T]$ of Lebesgue measure zero; by a ``version'' we then mean a particular choice of the kernel.} of the state at time $t$. The kernel $\mu_t$ is defined uniquely $dt$-almost everywhere, and we will show that there is a version such that the function $t\mapsto \int_{X}w(x)\,d\mu_t(x)$ is absolutely continuous for all $w\in C^1(X)$ and such that the continuity equation
\begin{equation}\label{eq:cont}
	\frac{d}{dt}\!\int_X w(x)\,d\mu_t(x) = \int_X \mathrm{grad}\,w(x)\cdot\bar{f}(t,x)\,d\mu_t(x)\hspace{0.4em}\forall\,w\in C^1(X)
\end{equation}
with the initial condition $\mu_0$ is satisfied almost everywhere w.r.t. the Lebesgue measure on $[0,T]$.


Fix $w\in C^1(X)$ and define the test function $v(t,x):=\psi(t)w(x)$, where $\psi\in C^1([0,T])$. Then from equation~(\ref{eq:testApp})
\begin{align*}
\psi(T)&\int_{X_T} w\,d\mu_T -  \psi(0)\int_X w\,d\mu_0 \\ &= \int_{[0,T]\times X}\hspace{-1em} \frac{\partial (\psi w)}{\partial t} + \mathrm{grad}(\psi w)\cdot\bar{f}(t,x)\,d\bar{\mu}(t,x)\\
&=  \int_0^T\! \int_X \dot{\psi}(t)w(x) + \psi(t)\mathrm{grad}\,w(x)\cdot\bar{f}(t,x)\,d\mu_t(x)dt\\
& = \int_0^T\Big[ \dot{\psi}\int_X w\,d\mu_t +\psi\int_X \mathrm{grad}\,w\cdot\bar{f}\,d\mu_t\Big] dt,
\end{align*}
which can be seen as an equation of the form
\begin{equation}\label{eq:lemmaAux}
	\psi(T)d\,-\,\psi(0)c=\! \int_0^T\!\!\dot{\psi}(t)a(t) + \psi(t)b(t)\,dt \hspace{0.5em} \forall \psi\in C^1([0,T]),
\end{equation}
where $c := \int_X w(x)\,d\mu_0(x)$, $d:=\int_{X_T} w(x)\,d\mu_T$ and $b(t) := \int_X \mathrm{grad}\,w\cdot\bar{f}(t,x)\,d\mu_t(x)$ are constants and $a(t)$ is an unknown function. One solution is clearly $a(t) = \int_X w\,d\mu_t$. Now we show that
\[
\tilde{a}(t) := c + \int_0^t b(\tau)\,d\tau = \int_X w\,d\mu_0 + \int_{0}^t\int_X \mathrm{grad}\,w\cdot \bar{f}\,d\mu_\tau d\tau
\]
also solves the equation. Indeed, since from~(\ref{eq:testApp}) with $v$ replaced by $w$ we have $\tilde{a}(T) = \int_X w\,d\mu_T = d$, integration by parts gives
\[\int_0^T \dot{\psi}(t)\tilde{a}(t)\,dt = \psi(T)d-\psi(0)c - \int_0^T \psi(t)b(t)\,dt,  \]
so $\tilde{a}(t)$ indeed solves equation~(\ref{eq:lemmaAux}). Now we prove that this solution is unique. Since $\tilde{a}$ is a solution we have
\[
\psi(T)d - \psi(0)c = \int_0^T \dot{\psi}(t)\tilde{a}(t) + \psi(t)b(t)\,dt,
\]
and subtracting this from~(\ref{eq:lemmaAux}) we get
\[
	0 = \int_0^T \dot{\psi}(t)[a(t) - \tilde{a}(t)]\,dt\quad \forall\: \psi\in C^1([0,T]),
\]
or equivalently
\[
0 = \int_0^T \phi(t)[a(t) - \tilde{a}(t)]\,dt\quad \forall\: \phi\in C([0,T]).
\]
Since $C([0,T])$ is dense in $L^1([0,T])$, this implies $a(t) = \tilde{a}(t)$ $dt$-almost everywhere. Consequently, since $C^1(X)$ is separable,
\begin{equation}\label{eq:appaAux}
\int_X \! w\,d\mu_t \!=\!\! \int_X w(x)d\mu_0 +\! \int_0^t\!\! \int_X\!\! \mathrm{grad}\,w\cdot\bar{f}\,d\mu_\tau d\tau\hspace{0.5em} \forall\: w \in C^1(X)
\end{equation}
$dt$-almost everywhere. The right-hand side of this equality is an absolutely continuous function of time for each $w\in C^1(X)$ and the left-hand side is a bounded positive linear functional on $C(X)$ for all $t\in [0,T]$. By continuity of the right-hand-side of~(\ref{eq:appaAux}) with respect to time, this right-hand side is a bounded positive linear functional on $C^1(X)$ for all $t\in[0,T]$ and therefore can be uniquely extended to a bounded positive linear functional on $C(X)$ (since $C^1$ is dense in $C$). Therefore, for all $t\in [0,T]$ the right-hand side has a representing measure~\cite[Theorem 2.14]{rudin} and hence there is a version of $\mu_t$ such that the equality~(\ref{eq:appaAux}) holds for \emph{all} $t\in [0,T]$. With this version of $\mu_t$ the function $t\mapsto \int_X w(x)\,d\mu_t(x)$ is absolutely continuous and $\mu_t$ solves the continuity equation~(\ref{eq:cont}).

To finish the proof, we use \cite[Theorem~3.2]{ambrosio} which asserts the existence of a nonnegative measure $\sigma$ on $C([0,T];\mathbb{R}^n)$ which corresponds to a family of absolutely continuous solutions to ODE~(\ref{eq:oderel}) whose projection at each time $t\in[0,T]$ coincides with $\mu_t$. More precisely, there is a nonnegative measure $\sigma\in M(C([0,T];\mathbb{R}^n))$ supported on a family of absolutely continuous solutions to ODE~(\ref{eq:oderel}) such that for all measurable $w:\mathbb{R}^n\to \mathbb{R}$
\begin{equation}\label{eq:contmeas}
\int_X w(x)\mu_t(x) = \int_{C([0,T];\,\mathbb{R}^n)}\hspace{-1.5em}w(x(t))\,d\sigma(x(\cdot))\quad \forall\: t\in [0,T].
\end{equation}
Using $I_{A\times B}(t,x) = I_A(t)I_B(x)$, it follows from~(\ref{eq:mudisint}) that
\[\bar{\mu}(A\times B)=\int_{[0,T]\times X}I_A(t)I_B(x)\,d\bar{\mu}(t,x)=\int_0^T I_A(t)\int_X I_B(x) \,d\mu_t(x)\, dt.\]
Therefore, using (\ref{eq:contmeas}) with $w(x)=I_B(x)$ and Fubini's theorem~\cite[Theorem 8.8]{rudin}, we get
\[\bar{\mu}(A\times B) = \int_{C([0,T];\,\mathbb{R}^n)}\int_0^T I_{A\times B}(t,x(t))\,dt\, d\sigma(x(\cdot)),\]
and so the occupation measure of the family of trajectories coincides with $\bar{\mu}$. Clearly, the initial and the final measures of this family coincide with $\mu_0$ and $\mu_T$ as well. As a result $\sigma$-almost all trajectories of this family are admissible. The proof is completed by discarding the null-set of trajectories that are not admissible, which does not change the measure $\sigma$ and the generated measures $\bar{\mu}$, $\mu_0$, $\mu_T$.
\end{proof}

\section*{Appendix~B}

In this Appendix we elaborate further on the discussion from Section~\ref{sec:relaxed} on the connection between the classical ROA and the relaxed ROA. Let us recall the definition of the classical ROA
\[
X_0 := \big\{x_0\in X \: : \: \mathcal{X}(x_0)\neq \emptyset \big\},
\]
where
\begin{align*}
\mathcal{X}(x_0)  :=  \big\{x(\cdot)\::\:\dot{x}(t)\in f(t,x(t),U)\:\text{a.e.},\: x(0)=x_0,
x(T)\in X_T,\: x(t)\in X\: \forall t\in [0,T]\big\}
\end{align*}
and $x(\cdot)$ is required to be absolutely continuous. Similarly, recall the definition of the relaxed~ROA
\[
\bar{X}_0 := \big\{x_0\in X \: : \: \bar{\mathcal{X}}(x_0)\neq \emptyset \big\},
\]
where
\begin{align*}
\bar{\mathcal X}(x_0)  :=  \big\{x(\cdot)\::\:&\dot{x}(t)\in \mathrm{conv}\:f(t,x(t),U)\:\text{a.e.},\: x(0)=x_0,\\
&x(T)\in X_T,\: x(t)\in X\: \forall t\in [0,T]\big\}
\end{align*}
with $x(\cdot)$ absolutely continuous. Obviously, it holds
\begin{equation}\label{inclusion}
X_0 \subset \bar{X}_0,
\end{equation}
and the question is whether this inclusion is strict or not.

Denote $B_\epsilon(a) := \{x\in\mathbb{R}^n \: : \: ||x-a||_2 < \epsilon \}$ and define the dilated constraint sets
\[  X^\epsilon := X \oplus B_\epsilon(0) \quad \mathrm{and}\quad   X_T^\epsilon := X_T \oplus B_\epsilon(0), \]
where $\oplus$ denotes the Minkowski sum of two sets. Accordingly, the dilated ROA and the dilated relaxed ROA are
\[
\begin{array}{rcl}
X^{\epsilon}_0 & := & \big\{x_0\in X \: : \: \mathcal{X}^{\epsilon}(x_0)\neq \emptyset \big\},\\
\bar{X}^{\epsilon}_0 & := & \big\{x_0\in X \: : \: \bar{\mathcal{X}}^{\epsilon}(x_0)\neq \emptyset \big\},
\end{array}
\]
where
\begin{align*}
\mathcal{X}^{\epsilon}(x_0)  &:=  \big\{x(\cdot)\::\:\dot{x}(t)\in f(t,x(t),U)\:\text{a.e.},\: x(0)=x_0,
x(T)\in X_T^\epsilon,\: x(t)\in X^\epsilon\: \forall t\in [0,T]\big\},\\
\bar{\mathcal X}^\epsilon(x_0)  &:=  \big\{x(\cdot)\::\:\dot{x}(t)\in \mathrm{conv}\:f(t,x(t),U)\:\text{a.e.},\: x(0)=x_0,
x(T)\in X_T^\epsilon,\: x(t)\in X^\epsilon\: \forall t\in [0,T]\big\}.
\end{align*}

Since the constraint sets are compact and the vector field $f$ Lipschitz, it follows from the equivalence between the trajectories of the convexified inclusion~(\ref{eq:incl2}) and solutions to the Liouville equation~(\ref{eq:Liouville}), stated in Lemma~\ref{lem:corr} of Appendix~A, and from Filippov-Wa$\dot{\mathrm z}$ewski's relaxation Theorem (see, e.g., \cite{aubin})
that
\[
\bar{X}_0 =\mathrm{spt}\,\mu_0 \:\subset\: \bigcap_{\epsilon > 0} X_0^\epsilon.
\]
In contrast, for all $\epsilon>0$ it holds
\[
X^{\epsilon}_0 = \bar{X}^{\epsilon}_0.
\]

In general inclusion (\ref{inclusion}) is strict. However, we argue that for most practical purposes the relaxed ROA $\bar{X}_0$ and the true ROA $X_0$ are the same.
Indeed, for any $x_0\in  \bar{X}_0$ there exists a sequence of admissible control functions $u_k(\cdot)$ such that
\[
	\sup_{t\in [0,T]} \mathrm{dist}_{X}(x_k(t)) \to 0\quad \mathrm{and}\quad \mathrm{dist}_{X_T}(x_k(T)) \to 0
\]
as $k\to\infty$, where $x_k(\cdot)$ denotes the solution to the ODE~(\ref{sys}) corresponding to the control function $u_k(\cdot)$, and $\mathrm{dist}_A(x):=\inf\{\|z-x\|_2\: : \: z\in A \}$ denotes the distance to a set $A$.

\section*{Appendix~C}

In this Appendix we describe two contrived examples of control systems (\ref{sys}) for which the relaxed ROA $\bar{X}_0$
is strictly larger than the classical ROA $X_0$; see Appendix~B for definitions.

Let $f(t,x,u)=u$, $U=\{-1,+1\}$,
$X=X_T=\{0\}$ for, e.g., $T=1$. Obviously there is no admissible trajectory
in ${\mathcal X}(0)$, whereas there is a feasible triplet of measures satisfying~(\ref{eq:Liouville}) given by $\mu_0 = \delta_0$, $\mu_T = \delta_0$ and
$\mu = \lambda_{[0,1]} \otimes\delta_0\otimes \frac{1}{2}(\delta_{-1}+ \delta_{+1})$, where $\lambda_{[0,1]}$ denotes the restriction of the Lebesgue measure to $[0,1]$. Therefore in this case $X_0=\emptyset \neq \bar{X}_0 = \{0\}$, but $\lambda(X_0)=\lambda(\bar{X}_0)$. Assumption~\ref{relaxed} is therefore satisfied. Note that the relaxed solution corresponds to an infinitely fast chattering of the control input between $-1$ and $+1$ which can be arbitrarily closely approximated by chattering solutions of finite speed; the singleton constraint set $X$, however, renders such solutions infeasible.

Another example for which the gap (e.g., in volume)
between $\bar{X}_0$ and $X_0$ can be as large as desired is the following. Consider $\dot{x} = u\in\mathbb{R}^2$ with
\[x \in X:= B_r(c_1) \cup ([-1,1]\times\{0\}) \cup B_1(c_2) \subset {\mathbb R}^2\] 
with the centers $c_1 = (-1-r,0)$ and $c_2 = (+2,0)$ and a given radius $r>0$. The input and terminal constraints are $u \in U := \{-1,1\}^2$ and $X_T := B_1(c_2)$. That is, the constraint set consist of two balls (one of radius $r$ and the other of radius~$1$) connected by a line; the target set is the ball of radius~1. Then $X_0 = X_T$ is strictly smaller
than $\bar{X}_0 = X$, and $\lambda(X_0) = \pi$, whereas $\lambda(\bar{X}_0) = (1+r^2)\pi$. Assumption~\ref{relaxed} is therefore not satisfied for $r>0$. In this example, regular solutions starting in the left ball cannot transverse the line to the right ball; this is, by contrast, possible for the relaxed solutions using an infinitely fast chattering.

\section*{Appendix~D}
In this appendix we prove Theorem~\ref{lem:noGapRelax}. In order to prove the theorem we rewrite primal LMI problem (\ref{plmi}) in a vectorized form as follows
\begin{equation}\label{primal}
\begin{array}{rcll}
p_k^* & = & \min & \mathbf{c}' \mathbf{y} \\
& & \mathrm{s.t.} & \mathbf{A}\mathbf{y} = \mathbf{b} \\
& & & \mathbf{e}+\mathbf{D}\mathbf{y} \in \mathbf{K},
\end{array}
\end{equation}
where $\mathbf{y}:=[y',\:y'_0,\:y'_T,\hat{y}'_0]'$ and $\mathbf{K}$ is a direct product of cones of positive
semidefinite matrices of appropriate dimensions, here corresponding to the moment matrix
and localizing matrix constraints. The notation $\mathbf{e}+\mathbf{D}\mathbf{y} \in
\mathbf{K}$ means that vector $\mathbf{e}+\mathbf{D}\mathbf{y}$ contains entries of positive
semidefinite moment and localizing matrices, and by construction matrix $\mathbf{D}$ has full
column rank (since a moment matrix is zero if and only if the corresponding moment vector is zero). Dual LMI problem (\ref{dlmi}) then becomes
\begin{equation}\label{dual}
\begin{array}{rcll}
d_k^* & = & \max & \mathbf{b}'\mathbf{x}-\mathbf{e}'\mathbf{z} \\
& & \mathrm{s.t.} & \mathbf{A}'\mathbf{x}+\mathbf{D}'\mathbf{z} = \mathbf{c} \\
& & & \mathbf{z} \in \mathbf{K},
\end{array}
\end{equation}
and we want to prove that $p_k^* = d_k^*$.
The following instrumental result is a minor extension of a classical lemma of the alternatives
for primal LMI (\ref{primal}) and dual LMI (\ref{dual}). The notation $\mathrm{int}\:\mathbf{K}$
stands for the interior of $\mathbf{K}$.

\begin{lemma}\label{lem:alternative}
If matrix $\mathbf{D}$ has full column rank, exactly one of these statements is true:
\begin{itemize}
\item there exists $\mathbf{x}$ and $\mathbf{z}\in \mathrm{int}\:\mathbf{K}$ such that $\mathbf{A}'\mathbf{x}+\mathbf{D}'\mathbf{z}=\mathbf{c}$
\item there exists $\mathbf{y}\neq 0$ such that $\mathbf{A}\mathbf{y}=0$, $\mathbf{D}\mathbf{y} \in \mathbf{K}$
and $\mathbf{c}'\mathbf{y} \leq 0$.
\end{itemize}
\end{lemma}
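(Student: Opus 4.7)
The plan is to establish mutual exclusion of the two alternatives directly, and then to prove that at least one of them must hold via a finite-dimensional separating hyperplane argument applied to a suitable convex set in the space of right-hand sides of the linear system.

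First I would verify mutual exclusion. Assuming both alternatives hold simultaneously, take the inner product of the identity $\mathbf{A}'\mathbf{x}+\mathbf{D}'\mathbf{z}=\mathbf{c}$ with $\mathbf{y}$ and use $\mathbf{A}\mathbf{y}=0$ to obtain $\mathbf{c}'\mathbf{y}=\langle \mathbf{z},\mathbf{D}\mathbf{y}\rangle$. Because $\mathbf{D}$ has full column rank and $\mathbf{y}\neq 0$, the vector $\mathbf{D}\mathbf{y}$ is a nonzero element of $\mathbf{K}$; since $\mathbf{K}$ is a direct product of PSD cones it is self-dual, and the inner product of a nonzero element of $\mathbf{K}$ with any element of $\mathrm{int}\,\mathbf{K}$ is strictly positive, giving $\mathbf{c}'\mathbf{y}>0$ and contradicting $\mathbf{c}'\mathbf{y}\leq 0$.

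For the other direction, assume the first alternative fails and consider the convex set
\[
C:=\{\mathbf{A}'\mathbf{x}+\mathbf{D}'\mathbf{z}:\mathbf{x}\text{ free},\;\mathbf{z}\in\mathrm{int}\,\mathbf{K}\},
\]
which is nonempty and, by hypothesis, does not contain $\mathbf{c}$. A standard finite-dimensional separation theorem yields $\mathbf{y}\neq 0$ and a scalar $\alpha$ with $\mathbf{y}'\mathbf{c}\leq\alpha\leq\mathbf{y}'p$ for every $p\in C$. Letting $p$ range over $\{\mathbf{A}'\mathbf{x}+\mathbf{D}'\mathbf{z}_0\}_{\mathbf{x}}$ for a fixed $\mathbf{z}_0\in\mathrm{int}\,\mathbf{K}$, boundedness below of the linear form $\mathbf{x}\mapsto(\mathbf{A}\mathbf{y})'\mathbf{x}$ for unconstrained $\mathbf{x}$ forces $\mathbf{A}\mathbf{y}=0$. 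Next, with $\mathbf{x}=0$ and $\mathbf{z}$ ranging over $\mathrm{int}\,\mathbf{K}$, I exploit the cone structure by substituting $\lambda\mathbf{z}$ for $\lambda>0$: letting $\lambda\to\infty$ in $\lambda(\mathbf{D}\mathbf{y})'\mathbf{z}\geq\alpha$ forces $(\mathbf{D}\mathbf{y})'\mathbf{z}\geq 0$ on $\mathrm{int}\,\mathbf{K}$, hence on $\mathbf{K}$ by continuity, so $\mathbf{D}\mathbf{y}\in\mathbf{K}^*=\mathbf{K}$ by self-duality; letting $\lambda\to 0^+$ in the same inequality forces $\alpha\leq 0$, yielding $\mathbf{c}'\mathbf{y}\leq \alpha\leq 0$.

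The main subtlety will be that $C$ is generally not closed (being the image of an open set under a linear map), so the separation theorem provides only a non-strict supporting hyperplane rather than a strictly separating one. This is precisely compensated for by the rich linear-conic structure of $C$: the presence of the whole range of $\mathbf{A}'$ forces the exact orthogonality $\mathbf{A}\mathbf{y}=0$, while the homogeneity of the cone $\mathrm{int}\,\mathbf{K}$ delivers both $\mathbf{D}\mathbf{y}\in\mathbf{K}$ (sending $\lambda\to\infty$) and the sign condition $\mathbf{c}'\mathbf{y}\leq 0$ (sending $\lambda\to 0^+$). Note that the full column rank of $\mathbf{D}$ is needed only in the mutual exclusion step to guarantee $\mathbf{D}\mathbf{y}\neq 0$; it plays no role in constructing the alternative.
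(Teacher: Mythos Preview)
Your argument is correct. The mutual-exclusion step is clean, and the separation argument for the ``at least one'' direction correctly exploits the affine part (range of $\mathbf{A}'$) to force $\mathbf{A}\mathbf{y}=0$ and the conic homogeneity to get both $\mathbf{D}\mathbf{y}\in\mathbf{K}$ and $\mathbf{c}'\mathbf{y}\le 0$.

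Your route differs from the paper's. You work directly with the set $C=\{\mathbf{A}'\mathbf{x}+\mathbf{D}'\mathbf{z}:\mathbf{x}\ \text{free},\ \mathbf{z}\in\mathrm{int}\,\mathbf{K}\}$ and separate it from $\mathbf{c}$, then peel off the three conditions on $\mathbf{y}$ one by one. The paper instead \emph{eliminates} $\mathbf{A}$ first: it picks a full-rank matrix $\mathbf{F}$ spanning $\ker\mathbf{A}$, observes that $\mathbf{A}'\mathbf{x}+\mathbf{D}'\mathbf{z}=\mathbf{c}$ is solvable in $\mathbf{x}$ iff $(\mathbf{D}\mathbf{F})'\mathbf{z}=\mathbf{F}'\mathbf{c}$, and then invokes the simpler classical alternative (no equality block) for the pair $(\bar{\mathbf{D}},\bar{\mathbf{c}})=(\mathbf{D}\mathbf{F},\mathbf{F}'\mathbf{c})$; the certificate $\bar{\mathbf{y}}$ is pulled back via $\mathbf{y}=\mathbf{F}\bar{\mathbf{y}}$. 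Your approach is self-contained and makes the role of each hypothesis transparent (in particular, as you note, full column rank of $\mathbf{D}$ is used only for mutual exclusion). The paper's reduction is shorter once the classical lemma is taken as a black box, and uses full column rank of $\mathbf{D}$ (together with that of $\mathbf{F}$) to ensure $\bar{\mathbf{D}}=\mathbf{D}\mathbf{F}$ again has full column rank, which is the hypothesis under which the cited classical alternative is stated.
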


{\it Proof of Lemma \ref{lem:alternative}:}
A classical lemma of alternatives states that if matrix $\mathbf{\bar{D}}$ has full column rank, then either
there exists $\mathbf{z}\in \mathrm{int}\:\mathbf{K}$ such that $\mathbf{\bar{D}}'\mathbf{z}=\mathbf{\bar{c}}$ or
there exists $\mathbf{\bar{y}}$ such that $\mathbf{\bar{D}}\mathbf{\bar{y}} \in \mathbf{K}$ and
$\mathbf{\bar{c}}'\mathbf{\bar{y}} \leq 0$,
but not both, see e.g. \cite[Lemma 2]{trnovska} for a standard proof based on the geometric
form of the Hahn-Banach separation theorem. Our proof then follows from restricting
this lemma of alternatives to the null-space of matrix $\mathbf{A}$. More explicitly, there exists
$\mathbf{x}$ and $\mathbf{z}$ such that $\mathbf{A}'\mathbf{x}+\mathbf{D}'\mathbf{z}=\mathbf{c}$
if and only if $\mathbf{z}$ is such that $\mathbf{\bar{D}}'\mathbf{z} = \mathbf{\bar{c}}$ with
$\mathbf{\bar{D}} = \mathbf{D}\mathbf{F}$, $\mathbf{\bar{c}}=\mathbf{F}'\mathbf{c}$ for
$\mathbf{F}$ a full-rank matrix such that $\mathbf{A}\mathbf{F}=0$. Matrix $\mathbf{\bar{D}}$ has
full column rank since it is the restriction of the full column rank matrix $\mathbf{D}$ to the null-space of $\mathbf{A}$.
\hfill $\blacksquare$

{\it Proof of Theorem \ref{lem:noGapRelax}:}
First notice that the feasibility set of LMI problem (\ref{primal}) is nonempty and bounded. Indeed, a triplet of zero measures is a trivial feasible point for~(\ref{rlp}) and hence $(0,0,0,\lambda)$ is feasible in~(\ref{rrlp}); consequently a concatenation of truncated moment sequences corresponding to the quadruplet of measures $(0,0,0,\lambda)$ is feasible in~(\ref{primal}) for each relaxation order $k$. Boundedness of the even components of each moment vector follows from the structure of the localizing matrices corresponding to the functions from Assumption~\ref{compact} and from the fact that the masses (zero-th moments) of the measures are bounded because of the constraint $\mu_0+\hat{\mu}_0 = \lambda$ and because $T < \infty$. Boundedness of the whole moment vectors then follows since the even moments appear on the diagonal of the positive semidefinite moment matrices.

To complete the proof, we follow \cite[Theorem 4]{trnovska} and show that boundedness of
the feasibility set of LMI problem (\ref{primal}) implies existence of an interior
point for LMI problem (\ref{dual}), and then from standard SDP
duality it follows readily that $p^*=d^*$ since $D$ has a full column rank;
see, e.g., \cite[Theorem 5]{trnovska} and references therein. 

Let $\mathbf{\bar{y}}$ denote a point in the feasibility set of LMI problem (\ref{primal}), i.e.
a vector satisfying
$\mathbf{A}\mathbf{\bar{y}}=\mathbf{b}$ and $\mathbf{e}+\mathbf{D}\mathbf{\bar{y}}
\in \mathbf{K}$. Suppose
that there is no interior point for LMI problem (\ref{dual}), i.e.
there is no $\mathbf{x}$ and $\mathbf{z}\in\mathrm{int}\:\mathbf{K}$ such
that $\mathbf{A}'\mathbf{x}+\mathbf{D}'\mathbf{z} = \mathbf{c}$. Then
from Lemma \ref{lem:alternative} there exists $\mathbf{y}\neq 0$ such that
$\mathbf{A}\mathbf{y}=0$, $\mathbf{D}\mathbf{y} \in \mathbf{K}$
and $\mathbf{c}'\mathbf{y} \leq 0$. It follows that there exists a ray
$\mathbf{\bar{y}}+t\mathbf{y}$, $t\geq 0$ of feasible points for LMI
problem (\ref{primal}), hence implying that the feasibility set is not bounded. \hfill $\blacksquare$

\section*{Acknowledgments}
The authors would like to thank Mathieu Claeys and Jean-Bernard Lasserre for helpful discussions, Masakazu Kojima for valuable insights into computational aspects and the anonymous referees for useful comments.

\end{document}